\newtheorem{tw}{Theorem}[section]
\newtheorem{prop}[tw]{Proposition}
\newtheorem{lem}[tw]{Lemma}
\newtheorem{wn}[tw]{Corollary}
\theoremstyle{remark}
\newtheorem{uw}[tw]{Remark}
\theoremstyle{definition}
\newcommand{\cal}[1]{\mathcal{#1}}
\newcommand{\bez}{\setminus}
\newcommand{\eps}{\varepsilon}
\newcommand{\ro}{\varrho}
\newcommand{\fal}[1]{\widetilde{#1}}
\newcommand{\kre}[1]{\overline{#1}}
\newcommand{\gen}[1]{\langle #1 \rangle}
\newcommand{\map}[3]{#1\colon #2\to #3}
\newcommand{\field}[1]{\mathbb{#1}}
\newcommand{\zz}{\field{Z}}
\newcommand{\rr}{\field{R}}
\newcommand{\st}{\;|\;}
\newcommand{\lst}[2]{{#1}_1,\dotsc,{#1}_{#2}}
\newcommand{\Mob}{M\"{o}bius strip}
\begin{document}

\numberwithin{equation}{section}

\title[The twist subgroup of the\ldots]
{The twist subgroup of the mapping class group of a nonorientable surface}

\author{Micha\l\ Stukow}

\thanks{Supported by the Foundation for Polish Science. Some part of the research was done while the author was visiting the Institute Mittag-Leffler,
Djursholm, Sweden. Their hospitality and financial support is greatly appreciated.}
%\today
\address[]{
Institute of Mathematics, University of Gda\'nsk, Wita Stwosza 57, 80-952 Gda\'nsk, Poland }

\email{trojkat@math.univ.gda.pl}

%\tableofcontents

\keywords{Mapping class groups, Nonorientable surfaces} \subjclass[2000]{Primary 57N05;
Secondary 20F38, 57M99}

\begin{abstract}
Let ${\cal{T}}(N)$ be the subgroup of the mapping class group of a nonorientable surface $N$ (possibly
with punctures and/or boundary components) generated by twists about two--sided circles.  We obtain a simple generating set for
${\cal{T}}(N)$. As an application we compute the first homology group (abelianization) of ${\cal{T}}(N)$.
\end{abstract}

\maketitle%
\section{Introduction}%
Let $N_{g,s}^n$ be a smooth, nonorientable, compact surface of genus $g$ with $s$
boundary components and $n$ punctures. If $s$ and/or $n$ is zero then we omit it from the
notation. If we do not want to emphasise the numbers $g,s,n$, we simply write $N$ for a
surface $N_{g,s}^n$. Recall that $N_{g}$ is a connected sum of $g$ projective planes and
$N_{g,s}^n$ is obtained from $N_g$ by removing $s$ open disks and specifying the set
$\Sigma=\{\lst{z}{n}\}$ of $n$ distinguished points in the interior of $N_g$.

Let ${\textrm{Diff}}(N)$ be the group of all diffeomorphisms $\map{h}{N}{N}$ such that $h$ is the identity
on each boundary component and $h(\Sigma)=\Sigma$. By ${\cal{M}}(N)$ we denote the quotient group of
${\textrm{Diff}}(N)$ by the subgroup consisting of maps isotopic to the identity, where we assume that
isotopies fix $\Sigma$ and are the identity on each boundary component. ${\cal{M}}(N)$ is called the
\emph{mapping class group} of $N$. The mapping class group of an orientable surface is defined
analogously, but we consider only orientation preserving maps.

If we assume that maps and isotopies fix the set $\Sigma$ pointwise then we obtain the so--called
\emph{pure mapping class group} ${\cal{PM}}(N_{g,s}^n)$. If we also require that maps preserve the local
orientation around each of the punctures then we obtain the group ${\cal{PM}}^+(N_{g,s}^n)$. It is an easy
observation that ${\cal{PM}}(N_{g,s}^n)$ is the subgroup of index $n!$ in ${\cal{M}}(N_{g,s}^n)$ and
${\cal{PM}}^+(N_{g,s}^n)$ is of index $2^n$ in ${\cal{PM}}(N_{g,s}^n)$.

Define also ${\cal{T}}(N)$ to be the \emph{twist subgroup} of ${\cal{M}}(N)$, that is the subgroup generated
by Dehn twists about two--sided circles.

By abuse of notation we will use the same letter for a map and its isotopy class and we
will use the functional notation for the composition of diffeomorphisms.

%Let $\fal{N}$ be a surface obtained form $N$ by by gluing a disk with a puncture to each
%boundary component. In the orientable case the mapping class group of a surface $N$ with
%boundary is usually studied via the following exact sequence
%\[\begin{CD} 1 @>>> \zz^s @>>> {\cal{PM}(M)}@>i_{\star}>>{\cal{PM}({\fal{M}})} @>>>1 \end{CD} \]
%where $\fal{M}$ is obtained from $M$ by gluing a punctured disk to each boundary
%component and $i_*$ is induced by an inclusion. However, in the nonorientable case,
%the image of $i_*$ contains only classes of homeomorphisms of $\fal{M}$ which preserve
%the local orientation of punctures coming from the boundary components of $M$.
%
%the study  of the mapping class group of a surface with boundary is  boundary, the mapping class group
%
%
\subsection{Background}
The study of algebraic properties of mapping class groups of an orientable surface goes back to the work of Dehn \cite{Dehn} and Nielsen \cite{Nil1,Nil2,Nil3}. Probably the best modern exposition of this reach theory is a survey article by Ivanov \cite{IvanovSurv}. On the other hand, the nonorientable case has not been studied much. The first significant result is due
to Lickorish \cite{Lick3}, who proved that the twist subgroup ${\cal{T}}(N_g)$ is a subgroup of index $2$
in the mapping class group ${\cal{M}}(N_{g})$. Moreover, ${\cal{M}}(N_{g})$ is generated by Dehn twists
and a so--called ``crosscap slide" (or a ``Y--homeomorphism"). Later Chillingworth \cite{Chil} found a finite generating set for the group ${\cal{M}}(N_{g})$, and Birman and Chillingworth \cite{BirChil1} showed how this generating set can be derived from the known properties of the mapping class group of the orientable double cover of~$N_g$.

These studies were continued much later by Korkmaz \cite{Kork-non}, who found finite generating sets for the groups ${\cal{M}}(N_{g}^n)$ and ${\cal{PM}}(N_{g}^n)$. Korkmaz \cite{Kork-non1,Kork-non} also computed the first integral homology group of ${\cal{M}}(N_{g}^n)$, and under additional assumption $g\geq 7$, of ${\cal{PM}}(N_{g}^n)$. He also showed \cite{Kork-non} that ${\cal{T}}(N_{g}^n)$ is of index $2^{n+1}n!$ in ${\cal{M}}(N_{g}^n)$, provided $g\geq 7$.

Recently \cite{Stukow_SurBg}, we extended some of the above results to arbitrary
mapping class groups ${\cal{M}}(N_{g,s}^n)$ and ${\cal{PM}}(N_{g,s}^n)$, provided $g\geq 3$. In particular
we obtained simple generating sets for these groups and we computed their abelianizations.

Finally, let us mention that recently Wahl \cite{Wahl_stab} proved some stability theorems for the homology of mapping class groups of nonorientable surfaces and using the ideas of Madsen and Weiss \cite{MadWeiss} she managed to identify the stable rational cohomology of ${\cal{M}(N)}$.

Another very promising project is the work of Szepietowski \cite{Szep_curv} who showed a method to obtain a presentation for the group  ${\cal{M}}(N_{g,s}^n)$. Using this technique he managed \cite{Szep_gen4} to derive a presentation of the group ${\cal{M}}(N_{4})$.

\subsection{Main results}
The following paper is a natural continuation of the results mentioned above, especially it should be thought as a continuation of \cite{Stukow_SurBg}. Namely, we study basic
algebraic properties of the twist subgroup of the mapping class group of a nonorientable surface of genus
$g\geq3$. The crucial observation which makes such a study possible is that ${\cal{T}}(N_{g,s}^n)$ is a
subgroup of index $2$ in ${\cal{PM}}^+(N_{g,s}^n)$ (hence of index $2^{n+1}n!$ in ${\cal{M}}(N_{g,s}^n)$
-- see Corollaries \ref{wn:T:eq:falT} and \ref{wn:T:eq:falT2}). Using this observation, we obtain
surprisingly simple generating set for the twist subgroup -- see Theorem \ref{tw:gen:T}. Moreover, we
compute the first integral homology group (abelianization) of this subgroup -- see Theorem
\ref{tw:main:hom}.

\section{Preliminaries}
By a \emph{circle} on $N$ we mean an unoriented simple closed curve on $N\bez \Sigma$, which is
disjoint from the boundary of $N$. Usually we identify a circle with its image. Moreover, as in the case of diffeomorphisms, we will use the same letter for a circle and its isotopy class. According to whether a regular neighbourhood of
a circle is an annulus or a \Mob, we call the circle \emph{two--sided} or \emph{one--sided} respectively.
We say that a circle is \emph{generic} if it bounds neither a disk with less than two punctures nor a \Mob\ disjoint from $\Sigma$.

Let $a$ be a two--sided circle. By definition, a regular neighbourhood of $a$ is an
annulus, so if we fix one of its two possible orientations, we can define the
\emph{right Dehn twist} $t_a$ about $a$ in the usual way. We emphasise that since we are
dealing with nonorientable surfaces, there is no canonical way to choose the orientation
of $S_a$. Therefore by a twist about $a$ we always mean one of two possible twists about
$a$ (the second one is then its inverse). By a \emph{boundary twist} we mean a twist
about a circle isotopic to a boundary component. It is known that if $a$ is not
generic then the Dehn twist $t_a$ is trivial. In particular a Dehn twist about the
boundary of a \Mob\ is trivial -- see Theorem 3.4 of \cite{Epstein}.

Other important examples of diffeomorphisms of a nonorientable surface are the
\emph{crosscap slide} and the \emph{puncture slide}. They are defined as a slide of a
crosscap and of a puncture respectively, along a loop. The general convention is that one
considers only crosscap slides along one--sided simple loops (in such a form it was
introduced by Lickorish \cite{Lick3}; for precise definitions and properties see
\cite{Kork-non}).

The following two propositions follow immediately from the above definitions.

\begin{prop}\label{pre:prop:twist:cong}
Let $N_a$ be an oriented regular neighbourhood of a two--sided circle $a$ in a surface
$N$, and let $\map{f}{N}{N}$ be any diffeomorphism. Then $ft_af^{-1}=t_{f(a)}$, where the
orientation of a regular neighbourhood of $f(a)$ is induced by the orientation of
$f(N_a)$.\qed
\end{prop}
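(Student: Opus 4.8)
The plan is to reduce the identity to the definition of the Dehn twist via a fixed model map on the standard annulus, and then to check that the orientations match up. First I would fix the standard oriented annulus $A=S^1\times[0,1]$ together with a model twist $\map{T}{A}{A}$ that is the identity near $\partial A$, and recall that, by definition, $t_a$ is obtained by choosing an orientation-preserving embedding $\map{\ro}{A}{N}$ with image $N_a$ and core $a$, setting $t_a=\ro T\ro^{-1}$ on $N_a$ and extending by the identity elsewhere. The relevant orientation here is precisely the chosen orientation of $N_a$.

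Next I would observe that $f\ro$ is again an embedding of $A$, this time with image $f(N_a)$ and core $f(a)$, so $f(N_a)$ is a regular neighbourhood of $f(a)$. Since $f$ carries the orientation of $N_a$ to its pushforward orientation on $f(N_a)$, the composite $f\ro$ is orientation-preserving with respect to that induced orientation. Hence $f\ro$ is an admissible parametrization for defining the twist $t_{f(a)}$ associated to the induced orientation on $f(N_a)$.

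The computation is then immediate. Outside $f(N_a)$ both $ft_af^{-1}$ and $t_{f(a)}$ restrict to the identity, while on $f(N_a)$ one has $ft_af^{-1}=f(\ro T\ro^{-1})f^{-1}=(f\ro)T(f\ro)^{-1}$, which is exactly the map defining $t_{f(a)}$ for the induced orientation. Comparing the two maps on all of $N$ gives $ft_af^{-1}=t_{f(a)}$.

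The one point that requires care — and really the only content of the statement — is the orientation bookkeeping. Because $N$ is nonorientable and $f$ need not preserve any local orientation, the two twists about $f(a)$ are genuinely different, and the assertion singles out the correct one through the pushforward orientation on $f(N_a)$. Thus the main thing to verify is that $f\ro$ induces exactly the asserted orientation; this is what guarantees that the conjugate equals $t_{f(a)}$ and not its inverse. The statement would fail if one instead fixed an orientation of the neighbourhood of $f(a)$ independently of $f$, so I would make this dependence explicit throughout.
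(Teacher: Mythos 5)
Your proof is correct and is essentially the argument the paper has in mind: the paper states this proposition as following immediately from the definition of the Dehn twist via a parametrized annular neighbourhood, and your write-up simply makes that conjugation-of-the-model-twist argument explicit, including the orientation bookkeeping that justifies the phrase ``induced by the orientation of $f(N_a)$''.
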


\begin{prop}\label{pre:prop:Yhomeo:cong}
Let $K$ be a subsurface of $N$ which is a Klein bottle with one boundary component $\xi$, and let $y$ be a crosscap slide on $K$ such that $y^2=t_\xi$. Then for any diffeomorphism $\map{f}{N}{N}$, $fyf^{-1}$ is a crosscap slide on $f(K)$ such that
\[(fyf^{-1})^2=t_{f(\xi)},\]
where the orientation of a regular neighbourhood of $f(\xi)$ is induced via $f$ by the orientation of a regular neighbourhood of $\xi$.\qed
\end{prop}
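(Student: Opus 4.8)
The plan is to prove Proposition~\ref{pre:prop:Yhomeo:cong} by a direct naturality argument, exploiting that both the crosscap slide $y$ and its defining relation $y^2=t_\xi$ are geometric objects that transport functorially under any diffeomorphism. The key observation is that a crosscap slide is, like a Dehn twist, \emph{supported} on a fixed subsurface---here the Klein bottle $K$ with boundary $\xi$---and is determined up to isotopy by the topological data of that subsurface together with a choice of one--sided loop along which the crosscap slides. Conjugation by $f$ should simply carry all of this data to the corresponding data on $f(K)$.

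First I would recall the explicit construction of $y$: inside $K$ one fixes a one--sided circle $\mu$ and a crosscap (a \Mob\ neighbourhood of another one--sided circle), and $y$ is the diffeomorphism obtained by sliding that crosscap once around $\mu$, supported in $K$ and equal to the identity on a neighbourhood of $\xi$. Then I would argue, exactly as in Proposition~\ref{pre:prop:twist:cong}, that the composite $fyf^{-1}$ is supported on $f(K)$, is the identity near $f(\xi)$, and is precisely the crosscap slide on $f(K)$ obtained by sliding the image crosscap $f(\text{crosscap})$ around the image one--sided loop $f(\mu)$. This is really just the statement that ``slide'' is a natural operation: conjugating an isotopy $h_t$ supported in $K$ by $f$ produces the isotopy $fh_tf^{-1}$ supported in $f(K)$, and the whole sliding picture is transported rigidly.

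For the squaring relation I would conjugate the identity $y^2=t_\xi$ directly. Since $fy^2f^{-1}=(fyf^{-1})^2$ and, by Proposition~\ref{pre:prop:twist:cong}, $ft_\xi f^{-1}=t_{f(\xi)}$ with the orientation of a regular neighbourhood of $f(\xi)$ induced via $f$ from the chosen orientation near $\xi$, we immediately obtain
\[
(fyf^{-1})^2 = f y^2 f^{-1} = f t_\xi f^{-1} = t_{f(\xi)},
\]
with exactly the orientation convention asserted in the statement. Thus the squaring relation follows formally once the geometric identification of $fyf^{-1}$ as a crosscap slide on $f(K)$ is in hand.

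The main obstacle, and the only point requiring genuine care, is the identification of $fyf^{-1}$ as a \emph{crosscap slide} in the precise sense of the definition, rather than merely some diffeomorphism of $f(K)$ whose square is $t_{f(\xi)}$; one must check that the image of the sliding loop is again one--sided and that $f$ carries the crosscap of $K$ to a crosscap of $f(K)$ compatibly with the slide. Since $f$ is a diffeomorphism it preserves one--sidedness and carries \Mob\ neighbourhoods to \Mob\ neighbourhoods, so this is essentially automatic, but it is the step where the definition of a crosscap slide (rather than pure formal manipulation) is actually used. As the proposition is flagged as following ``immediately from the above definitions,'' I expect the author's proof to be one or two sentences invoking exactly this naturality.
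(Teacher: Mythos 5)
Your argument is correct and is exactly the naturality reasoning the paper intends: the paper gives no written proof, stating only that the proposition ``follows immediately from the above definitions,'' and your conjugation of the support data together with the computation $(fyf^{-1})^2=fy^2f^{-1}=ft_\xi f^{-1}=t_{f(\xi)}$ via Proposition~\ref{pre:prop:twist:cong} is the intended (omitted) justification. No discrepancy to report.
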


%\begin{prop}\label{pre:prop:slide:cong}
%Let $v$ be a slide of a puncture $z$ along a simple closed loop $\alpha$ on a surface
%$N$, and let $\map{f}{N}{N}$ be any diffeomorphism. Then $fvf^{-1}$ is a slide of the
%puncture $f(z)$ along the loop $f(\alpha)$. \qed
%\end{prop}
%
%The next proposition, which provides a relationship between puncture slides and twists is
%proved in Section 6.1 of \cite{Ivanov1}.
%\begin{prop}\label{pre:prop:slide:twist}
%Let $\alpha$ be a two--sided simple loop on a surface $N$, based at the puncture $z$. Define also
%$a$ and $b$ to be the boundary circles of a regular neighbourhood $N_\alpha$ of $\alpha$ such that
%the orientations of $\alpha$ and of $N_\alpha$ are as in Figure \ref{01_pre} (we indicate the
%orientation of $N_\alpha$ by choosing the direction of a right twist about $a$).
%\begin{figure}[h]
%\begin{center}
%\includegraphics{01_pre.eps}
%\caption{Two--sided loop $\alpha$ and its regular neighbourhood.}\label{01_pre}
%\end{center}
%\end{figure}
%Then $t_at_b^{-1}$ is the slide of $z$ along~$\alpha$. \qed
%\end{prop}

One of our main tools in studying properties of mapping class groups is the so--called \emph{lantern relation}. The proof can be found in Section 4
of \cite{John1}.
%
%Our main tool in studi
%
%Finally, let us recall the so--called \emph{lantern relation}, which will be our main
%tool in studying properties of mapping class groups. The proof can be found in Section 4
%of \cite{John1}.
\begin{prop}
Let $S$ be a sphere with four holes embedded in a surface $N\bez \Sigma$ and let
$a_0,a_1,a_2,a_3$ be the boundary circles of $S$. Define also $a_{1,2},a_{2,3},a_{1,3}$
as in Figure \ref{01_2_pre}(i) and assume that the orientations of regular neighbourhoods of
these seven circles are induced from the orientation of $S$.
Then \[t_{a_0}t_{a_1}t_{a_2}t_{a_3}=t_{a_{1,2}}t_{a_{2,3}}t_{a_{1,3}}.\] \qed
\end{prop}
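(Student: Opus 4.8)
The plan is to reduce the statement to a relation in the mapping class group of the \emph{orientable} four--holed sphere $S$, where orientation--dependent techniques become available, and then to verify that relation directly. All seven circles $a_0,a_1,a_2,a_3,a_{1,2},a_{2,3},a_{1,3}$ lie in the embedded subsurface $S\podz N\bez\Sigma$, which carries the orientation used to induce all the twist directions. The inclusion $S\hookrightarrow N\bez\Sigma$, combined with extension by the identity outside $S$, induces a homomorphism ${\cal{M}}(S)\to{\cal{M}}(N)$ sending each twist about one of the seven circles (computed in $S$) to the corresponding twist in $N$, with matching orientation convention by Proposition \ref{pre:prop:twist:cong}. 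Hence it suffices to prove the identity \[t_{a_0}t_{a_1}t_{a_2}t_{a_3}=t_{a_{1,2}}t_{a_{2,3}}t_{a_{1,3}}\] inside ${\cal{M}}(S)$: once it holds there, applying the homomorphism yields exactly the asserted relation in ${\cal{M}}(N)$. Note that non--injectivity of this homomorphism is harmless, since an equality of words maps to an equality of their images; in particular we lose nothing if some $a_i$ becomes non--generic in $N$ and its twist degenerates.

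Within ${\cal{M}}(S)$ I would argue by the Alexander method. Fix a concrete model of $S$ as a disk with outer boundary $a_0$ and three inner holes bounded by $a_1,a_2,a_3$ arranged in a row, with $a_{i,j}$ the circle enclosing exactly the holes $i$ and $j$. Choose a system $\{\lst{\beta}{k}\}$ of disjoint properly embedded arcs that cut $S$ into a single disk. A diffeomorphism of $S$ fixing $\partial S$ pointwise is determined up to isotopy by the isotopy classes (rel endpoints) of the images of these arcs, so two mapping classes coincide precisely when they act identically on each $\beta_j$. The computation is then to apply both words in the seven twists to each arc and compare, using the standard surgery description of how a twist $t_c$ acts on an arc meeting $c$. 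Here it is useful that each $t_{a_i}$ is a boundary--parallel, hence central, element of ${\cal{M}}(S)$, so the left--hand side is central; the content of the relation is therefore concentrated in how $t_{a_{1,2}}t_{a_{2,3}}t_{a_{1,3}}$ shears the cutting arcs.

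The main obstacle is the bookkeeping together with the orientation conventions. The lantern relation is genuinely sensitive to the directions of all the twists, since reversing a single one falsifies it; thus one must pin down each $t_{a_i}$ and $t_{a_{i,j}}$ relative to the \emph{single} orientation of $S$, exactly as the ``induced'' convention in the statement prescribes, and carry these signs consistently through every surgery. The pairwise intersections of $a_{1,2},a_{2,3},a_{1,3}$ make tracking the right--hand side the delicate part of the computation, and once the two words are shown to send a cutting system to the same isotopy classes, the Alexander method closes the argument. (Alternatively, one may derive the identity algebraically from a presentation of ${\cal{M}}(S)$ in terms of braid--type relations, but the explicit arc computation is the most self--contained route.)
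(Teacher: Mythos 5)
The paper does not prove this proposition at all: it is the classical lantern relation, and the text simply refers the reader to Section 4 of \cite{John1} (Johnson). Your strategy is therefore genuinely different in kind -- a self-contained verification rather than a citation -- and it is the standard modern route: reduce to the mapping class group of the four-holed sphere and apply the Alexander method. The reduction step is handled correctly: the inclusion $S\hookrightarrow N\bez\Sigma$ induces a homomorphism under which an identity of words in the seven twists pushes forward, Proposition \ref{pre:prop:twist:cong} guarantees the orientation conventions match, and you rightly note that non-injectivity (or degeneration of some $t_{a_i}$ in $N$) costs nothing. The observation that the boundary twists are central in ${\cal{M}}(S)$ and your insistence that the relation is orientation-sensitive are both on point.

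That said, as written your argument is an outline rather than a proof: the entire mathematical content of the lantern relation lives in the arc computation you describe but do not perform. Saying ``apply both words to each cutting arc and compare'' defers exactly the step where an error of sign or of surgery would show up, and you yourself identify the pairwise intersections of $a_{1,2},a_{2,3},a_{1,3}$ and the orientation bookkeeping as the delicate part. To make this a complete proof you would need to exhibit a concrete cutting system (three disjoint arcs joining the inner boundary components to the outer one suffice), draw or otherwise specify the images of each arc under both composites, and check they agree up to isotopy rel endpoints, together with a one-line justification of the Alexander-method criterion for the four-holed sphere. None of this is conceptually problematic -- it is carried out in full in standard references -- but until it is done the proposal establishes a plan, not the relation. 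Given that the paper itself outsources the proof, citing such a reference would also be an acceptable completion.
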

\begin{figure}[h] \begin{center}
\includegraphics{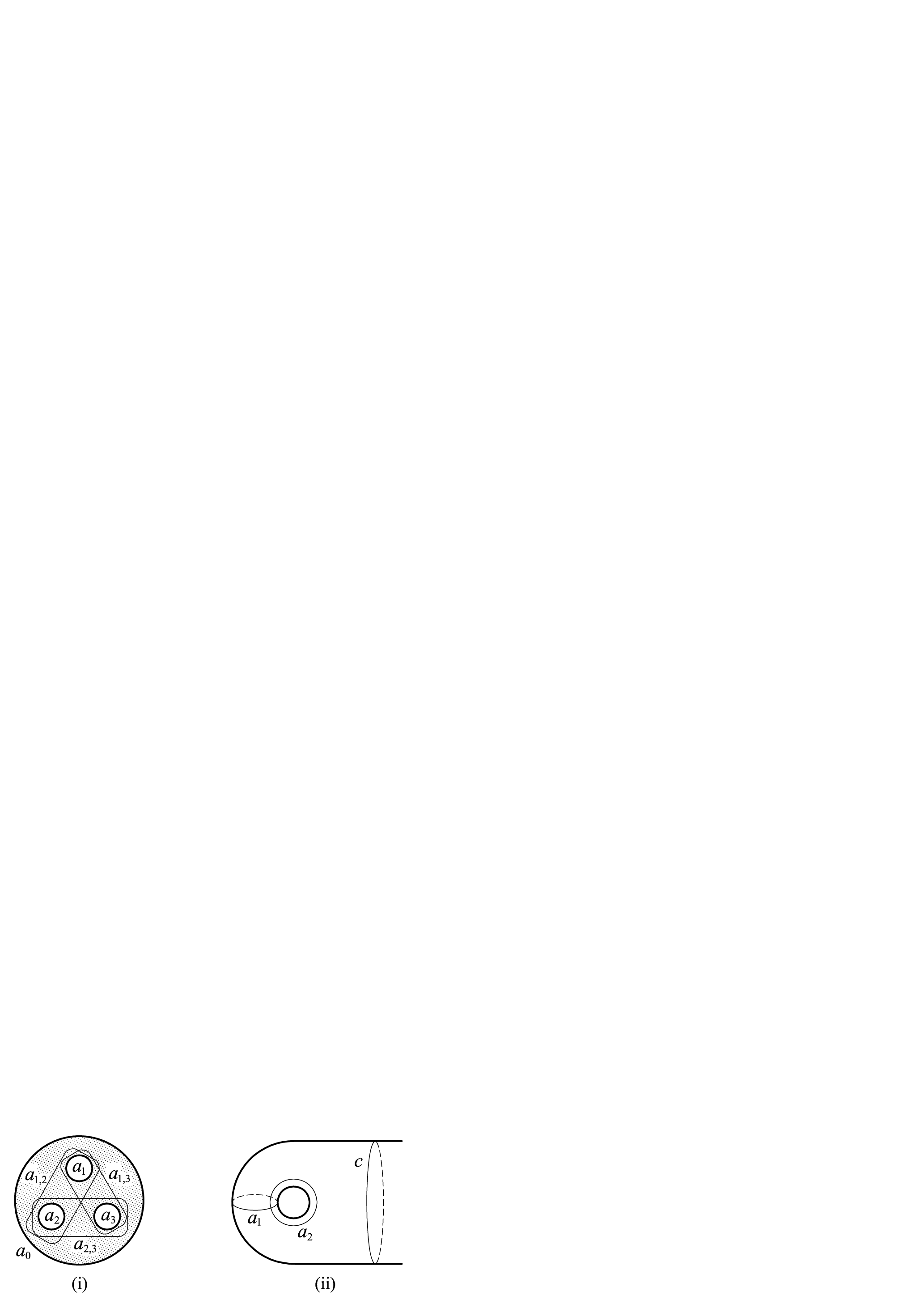}
\caption{Circles of the lantern and torus with a hole relations.}\label{01_2_pre}
\end{center} \end{figure}
Finally, recall the \emph{torus with a hole relation} -- see Lemma 3 of \cite{Lick2}. It can be also thought as an instance of the so--called \emph{star} relation \cite{Gervais_top}.
\begin{prop}\label{prop:tor:hole}
Let $S$ be a torus with one boundary component $c$ embedded in a surface $N\bez \Sigma$ and let $a_1$ and $a_2$ be two two--sided circles as in Figure \ref{01_2_pre}(ii).
%\begin{figure}[h] \begin{center}
%\includegraphics{01_2b_pre.eps}
%\caption{Circles of the {torus with a hole} relation.}\label{01_2b_pre}
%\end{center} \end{figure}
If the orientations of regular neighbourhoods of $a_1,a_2$ and $c$ are induced from the orientation of $S$ then
\[(t_{a_1}t_{a_2})^6=t_c.\]\qed
\end{prop}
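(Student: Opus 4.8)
The plan is to reduce the relation to the mapping class group of the torus with one boundary component and then to exploit the exceptionally rigid structure of that group. Since $a_1$, $a_2$ and $c$ all lie in the subsurface $S$, both sides of the asserted equality are supported in $S$, which is orientable; fixing the orientation of $S$ (the one that induces the chosen orientations of the regular neighbourhoods) turns $t_{a_1}$, $t_{a_2}$ and $t_c$ into genuine right Dehn twists in the orientable mapping class group $\cal{M}(S)$. It therefore suffices to establish $(t_{a_1}t_{a_2})^6=t_c$ inside $\cal{M}(S)$, since the homomorphism induced by the inclusion $S\hookrightarrow N$ (extending diffeomorphisms by the identity) carries this identity into $\cal{M}(N)$.

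First I would recall the classical identification of $\cal{M}(S)$, the mapping class group of the torus with one boundary component fixed pointwise, with the braid group $B_3=\gen{\sigma_1,\sigma_2\st \sigma_1\sigma_2\sigma_1=\sigma_2\sigma_1\sigma_2}$, under $t_{a_1}\mapsto\sigma_1$ and $t_{a_2}\mapsto\sigma_2$. Equivalently, capping the boundary $c$ with a once--punctured disk gives a central extension $\map{\pi}{\cal{M}(S)}{\mathrm{SL}(2,\field{Z})}$ with $\ker\pi=\gen{t_c}\cong\field{Z}$, sending $t_{a_1},t_{a_2}$ to the standard generators $\left(\begin{smallmatrix}1&1\\0&1\end{smallmatrix}\right)$ and $\left(\begin{smallmatrix}1&0\\-1&1\end{smallmatrix}\right)$. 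Their product $\pi(t_{a_1}t_{a_2})$ has characteristic polynomial $\lambda^2-\lambda+1$, hence is conjugate to a rotation of order $6$; consequently $\pi\bigl((t_{a_1}t_{a_2})^6\bigr)=1$ and $(t_{a_1}t_{a_2})^6$ lies in the infinite cyclic centre $\ker\pi=\gen{t_c}$. Tracing the kernel of $B_3\to\mathrm{SL}(2,\field{Z})$, which is generated by $(\sigma_1\sigma_2)^6$, shows moreover that $(t_{a_1}t_{a_2})^6$ in fact \emph{generates} $\ker\pi$, so $(t_{a_1}t_{a_2})^6=t_c^{\pm1}$.

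It remains to pin down the exact power, that is to rule out the exponent $-1$. This I would do by evaluating the Euler class of the central extension, or more concretely by tracking a single arc: take a properly embedded arc $\delta$ in $S$ meeting $a_1$ once and disjoint from $a_2$, compute $(t_{a_1}t_{a_2})^6(\delta)$ via the surgery description of a twist acting on an arc, and compare the resulting boundary framing with that produced by $t_c(\delta)$. The main obstacle is precisely this last bookkeeping: equality modulo the centre is immediate from the order--$6$ computation, but obtaining the sharp identity forces one to control the boundary framing carefully, and it is here that the exponent $6$ (rather than $3$, the order of $\sigma_1\sigma_2$ in $\mathrm{PSL}(2,\field{Z})$) appears and that the sign conventions for the twists, fixed by the induced orientation on $S$, must be handled with care.
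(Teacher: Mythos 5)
Your approach is sound and is genuinely different from what the paper does: the paper offers no argument at all for this proposition, simply citing Lemma~3 of Lickorish and remarking that it is an instance of the star relation, whereas you reconstruct a proof from the structure of $\cal{M}(S)$ for $S$ a one-holed torus. Your reduction is correct as far as it goes: restricting to the orientable subsurface $S$, using $\cal{M}(S)\cong B_3$ and the capping homomorphism onto $\mathrm{SL}(2,\zz)$ with kernel $\gen{t_c}$, the order-$6$ computation together with the fact that $\ker\bigl(B_3\to \mathrm{SL}(2,\zz)\bigr)=\gen{(\sigma_1\sigma_2)^6}$ does pin down $(t_{a_1}t_{a_2})^6=t_c^{\pm1}$. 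What this buys over the paper's citation is an almost self-contained argument; what it costs is that the entire content beyond ``equal up to sign'' is concentrated in the last step, which you only sketch. That step is a genuine piece of work, not mere bookkeeping: no homological or central-quotient invariant can see the difference between $t_c$ and $t_c^{-1}$, so you must either carry out the arc computation (the Alexander method on $S$, tracking one arc through all twelve twists and comparing the boundary framing with that of $t_c(\delta)$), or invoke the identification $t_c=(\sigma_1\sigma_2)^6$ built into the standard presentation $\cal{M}(S)\cong B_3$ --- which is essentially the statement being proved. As written, the sign determination is a plan rather than a proof, so you should either execute the arc-tracking explicitly or accept that at this point you are re-citing the classical chain relation, which is exactly the move the paper makes.
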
 
\section{Two models for a nonorientable surface}\label{sec:mod2}
Let $g=2r+1$ for $g$ odd and $g=2r+2$ for $g$ even. Represent the surface
$N=N_{g,s}^n$ as a connected sum of an orientable surface of genus $r$ and one or two
projective planes (one for $g$ odd and two for $g$ even). Figures \ref{fig:010_gen_odd}
and \ref{fig:020_gen_even} show this model of $N$ -- in these figures the big shaded
disks represent crosscaps, hence their interiors are to be removed and then the antipodal
points on each boundary component are to be identified. The smaller shaded disks
represent components of $\partial N$ (we will call them \emph{holes}).
\begin{figure}[h]
\includegraphics{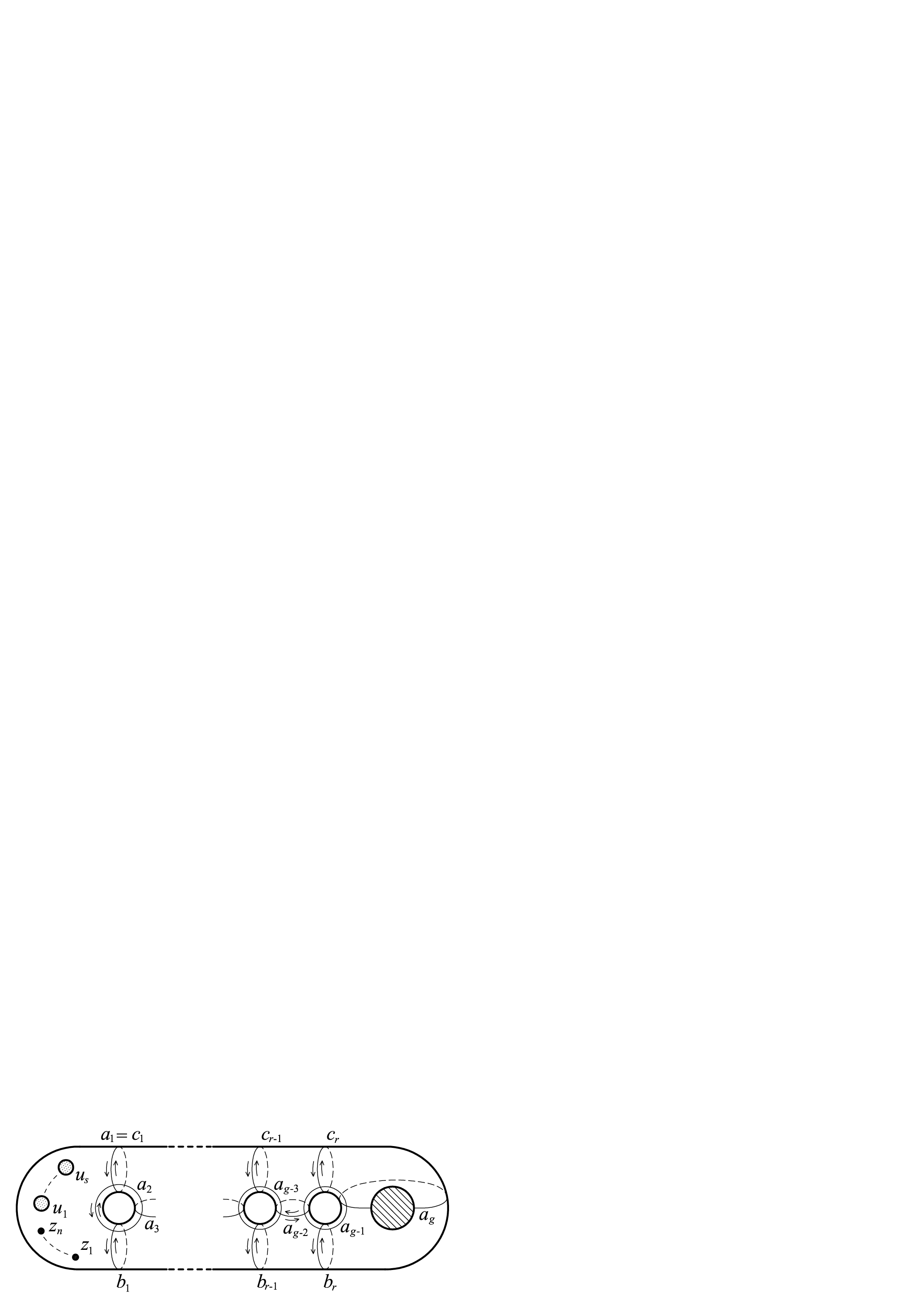}
\caption{Circles $a_i,b_i,c_i$ and $u_i$ for $g=2r+1$.}\label{fig:010_gen_odd}
\end{figure}
\begin{figure}[h]
\includegraphics{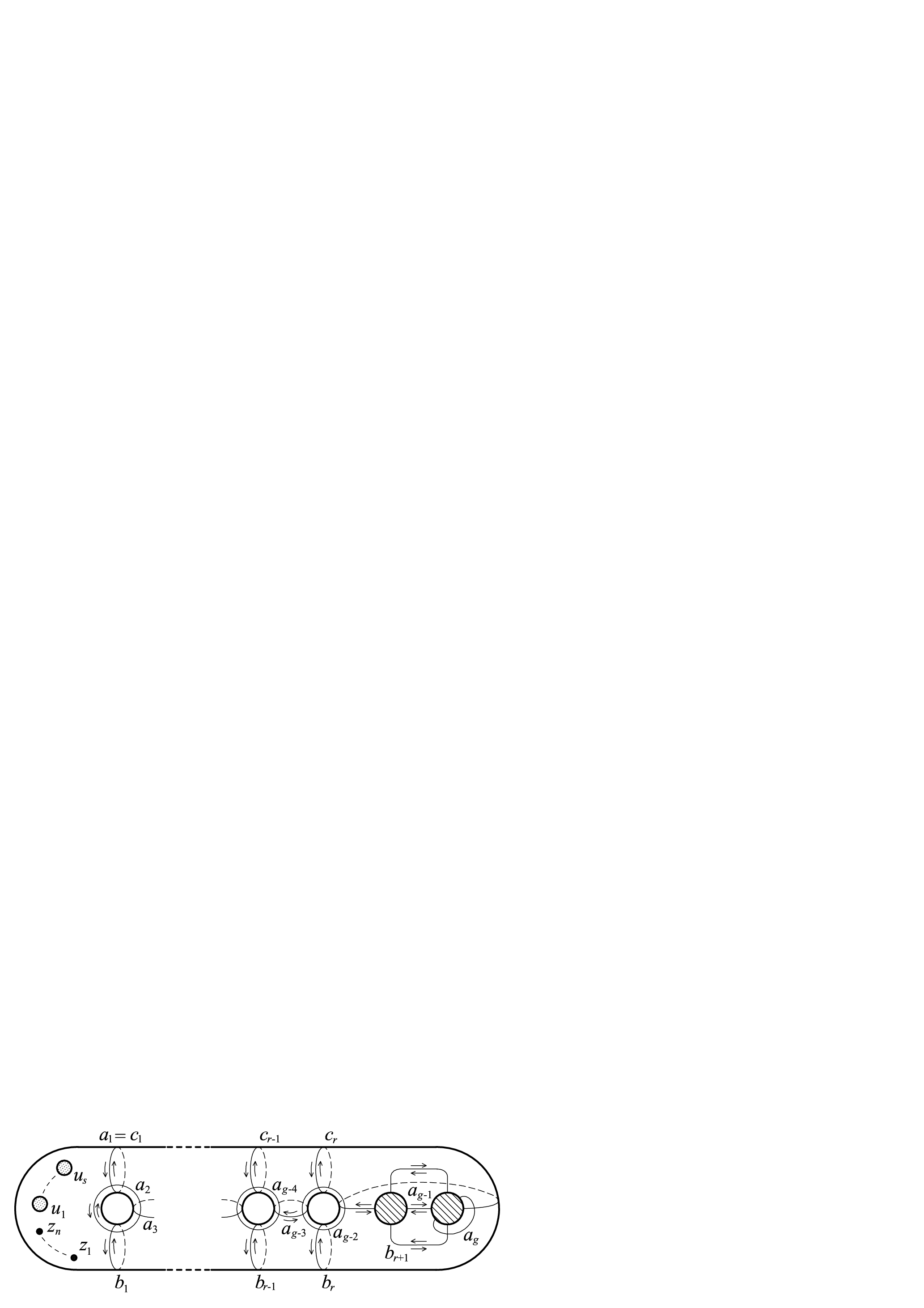}
\caption{Circles $a_i,b_i,c_i$ and $u_i$ for $g=2r+2$.}\label{fig:020_gen_even}
\end{figure}

It is well known that $N=N_{g,s}^n$ can be also represented as a sphere with $n$
punctures, $s$ holes and $g$ crosscaps -- cf Figure \ref{060_mod2}.
\begin{figure}[h]
\includegraphics{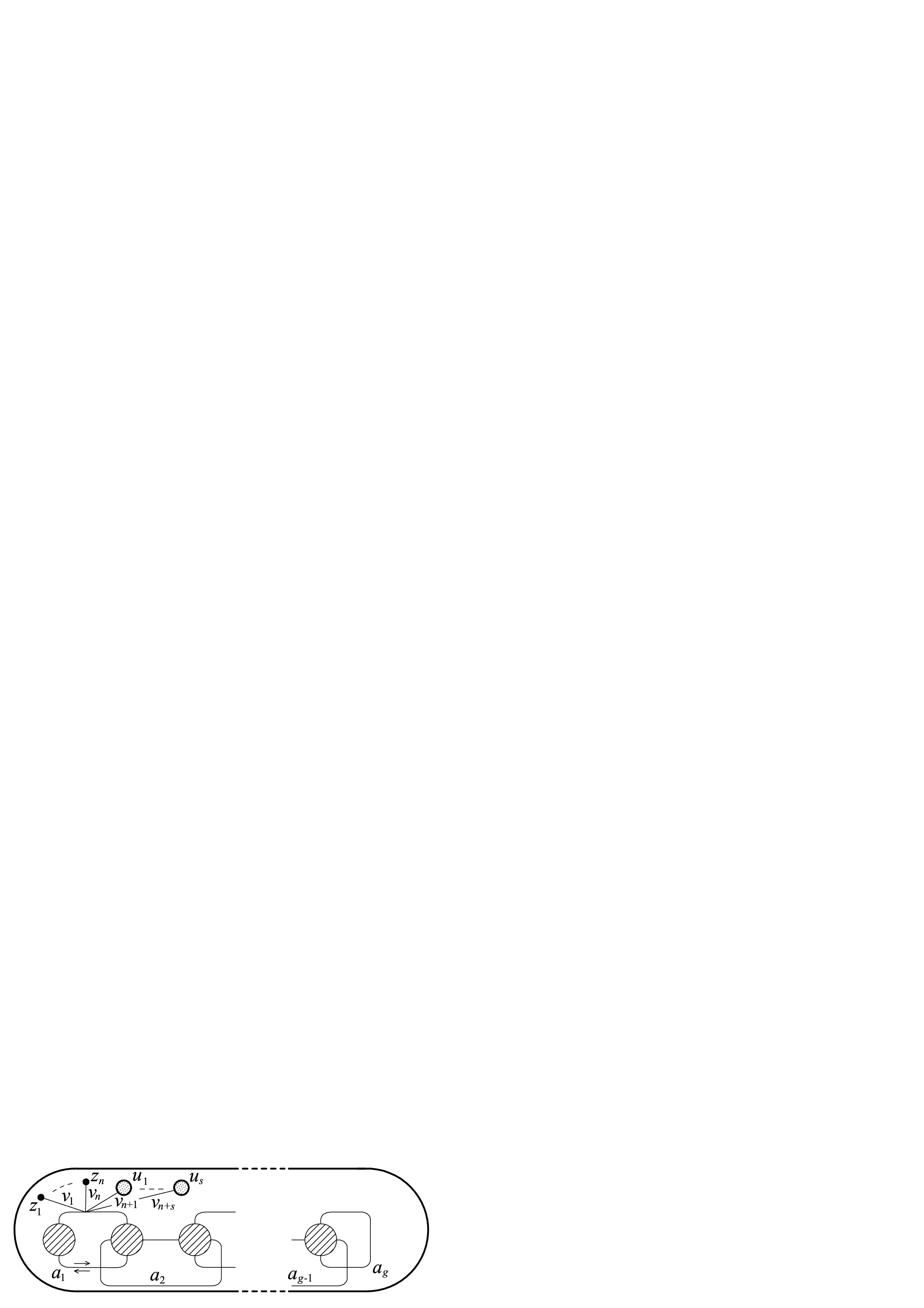}
\caption{Surface $N$ as sphere with crosscaps.}\label{060_mod2}
\end{figure}
In order to distinguish this model from the previous one (provided by Figures
\ref{fig:010_gen_odd} and \ref{fig:020_gen_even}), let us denote it by $\fal{N}$. The
goal of this section is to construct an explicit homeomorphism $\map{\Phi}{N}{\fal{N}}$.

Let $\lst{a}{g}$ and $\lst{u}{s}$ be two--sided circles on $N$ as in Figures \ref{fig:010_gen_odd}
and \ref{fig:020_gen_even} ($u_i$ are the boundary circles).
\begin{figure}[h]
\includegraphics{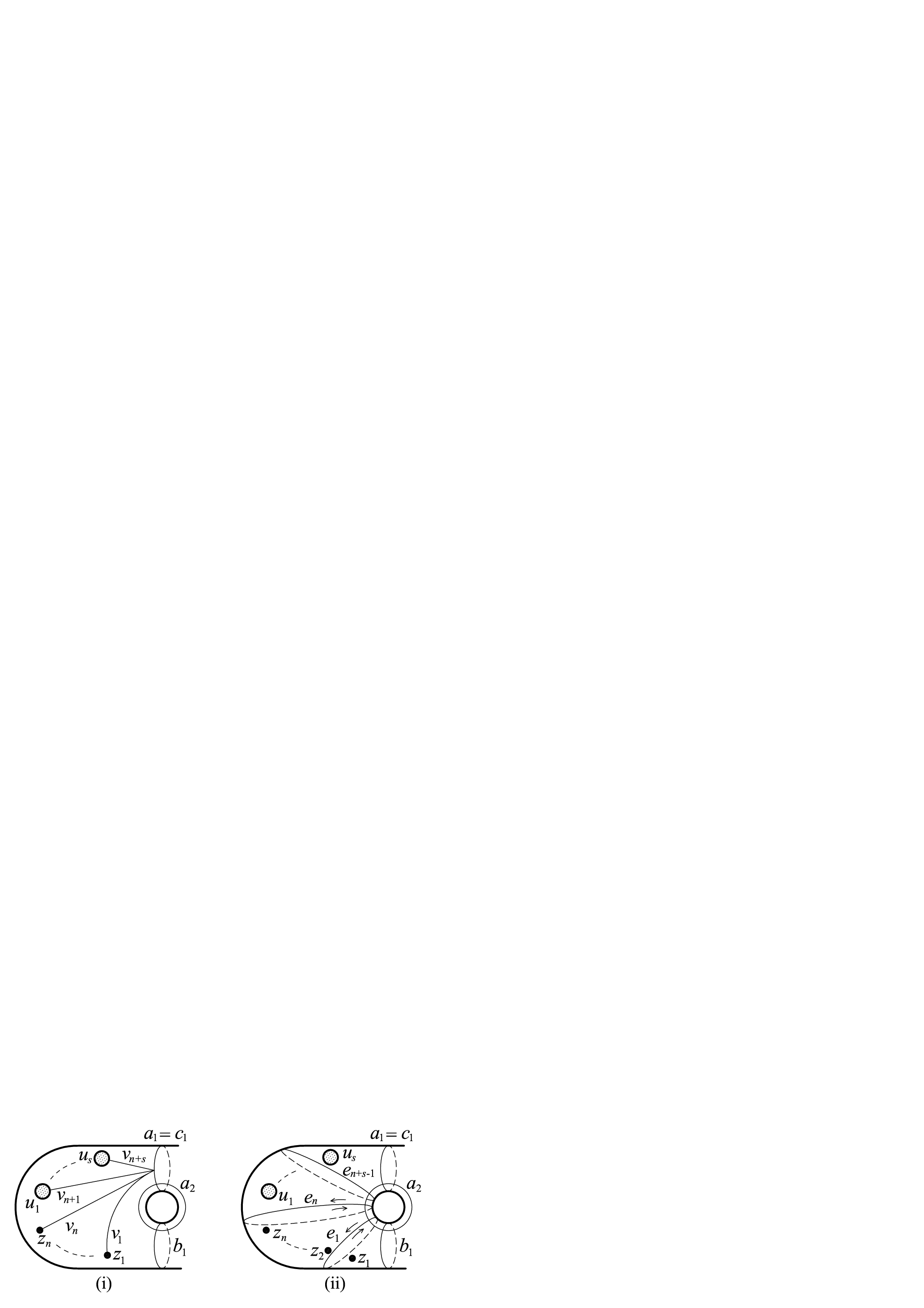}
\caption{Circles $e_i$ and arcs $v_i$.}\label{fig:030_gen_ef}
\end{figure}
Define also $\lst{v}{n+s}$ to be the arcs as in Figure \ref{fig:030_gen_ef}(i) and
observe that if we cut $N$ along the circles $\lst{a}{g}$ and arcs $\lst{v}{n+s}$ we
obtain a polygon $\Delta$ with sides
\begin{multline*}a_1,\ldots,
a_{g-1},a_g,a_{g-1},\ldots, a_1,v_1,v_1,\ldots, v_n,v_n,\\
v_{n+1},u_1,v_{n+1},\ldots, v_{n+s},u_s,v_{n+s}, a_1,\ldots, a_{g-1},a_g,a_{g-1},\ldots,a_2
\end{multline*}
where the labels indicate from which circle/arc the edge came from. Identical polygon
can be obtained by cutting the surface $\fal{N}$ along the circles $\lst{a}{g}$ and
arcs $\lst{v}{n+s}$ indicated in Figure \ref{060_mod2}. Moreover, it is not difficult to
see that the identification patterns required to reconstruct $N$ and $\fal{N}$ form
$\Delta$ are identical (cf Figures \ref{070_cut_odd} and \ref{080_cut_even}).
\begin{figure}[h]
\includegraphics[scale=1.1]{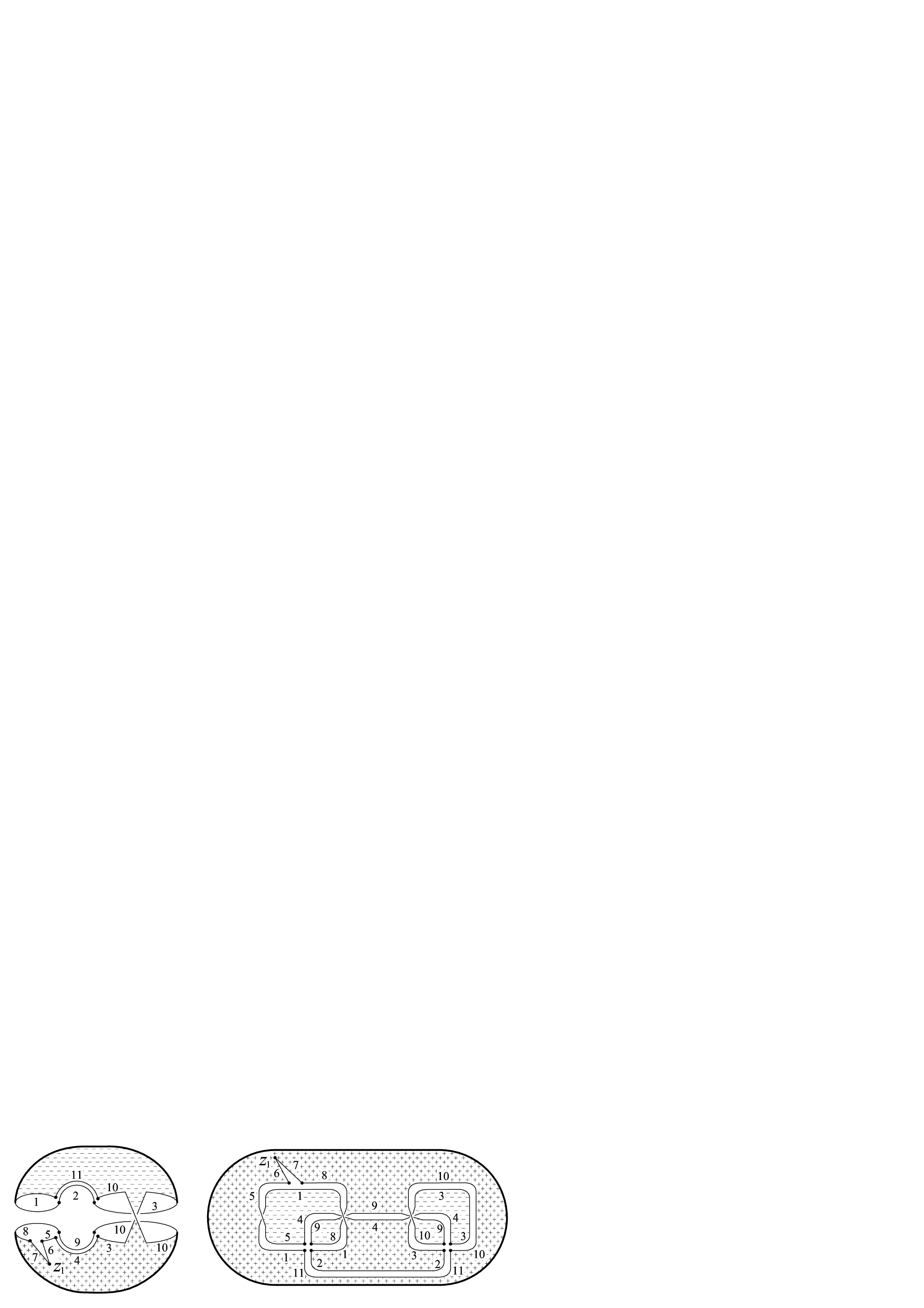}
\caption{Cutting $N$ and $\fal{N}$ for $(g,s,n)=(3,0,1)$.}\label{070_cut_odd}
\end{figure}
\begin{figure}[h]
\includegraphics[scale=1.1]{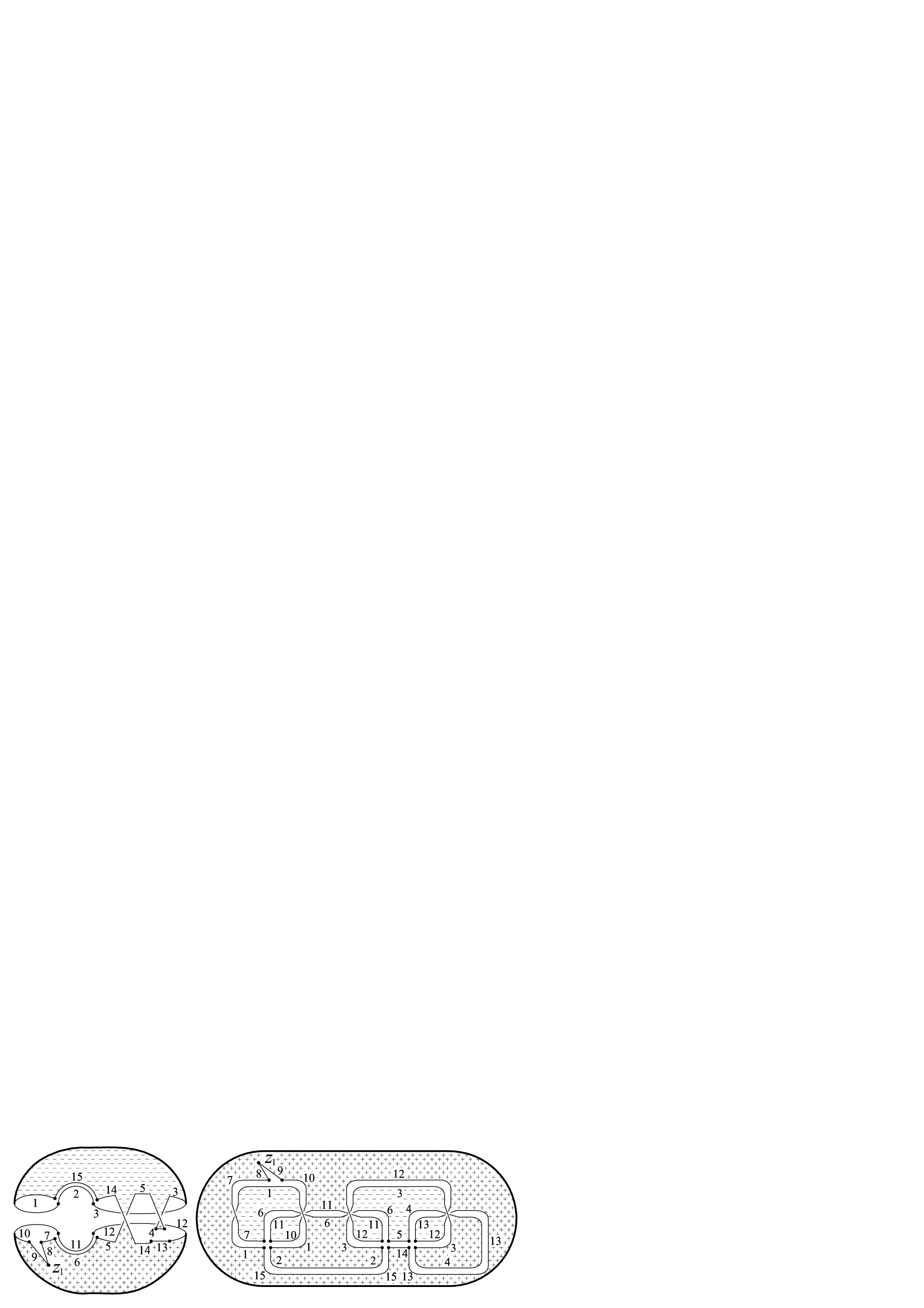}
\caption{Cutting $N$ and $\fal{N}$ for $(g,s,n)=(4,0,1)$.}\label{080_cut_even}
\end{figure}
This implies that there exists a homeomorphism $\map{\Phi}{N}{\fal{N}}$ which maps
circles $\lst{a}{g}$, $\lst{u}{s}$ and arcs $\lst{v}{n+s}$ in Figures
\ref{fig:010_gen_odd}, \ref{fig:020_gen_even}, \ref{fig:030_gen_ef}(ii) to the
circles/arcs with the same labels in Figure \ref{060_mod2}.

The above geometric description of $\Phi$ is very convenient because it provides a
simple method for transferring circles between two models of $N$. In fact, if $c$ is a
circle on $N$ then $c$ becomes a collection of arcs in $\Delta$. Moreover, up to isotopy
we can assume that $c$ does not pass through any of the vertices of $\Delta$. Since
$\Delta$ is simply connected each of this arcs is uniquely determined by the position of
its endpoints. To obtain the image $\Phi(c)$ it is enough to reconstruct the surface
$\fal{N}$ from $\Delta$ keeping track of the collection of arcs composing $c$. In
practise this can be easily done using pictures like Figures \ref{070_cut_odd} and
\ref{080_cut_even}. Moreover, it is not difficult to see that we can transfer not only the circles but also the orientations of their neighbourhoods (if it exists) -- small plus and minus signs in Figures \ref{070_cut_odd} and \ref{080_cut_even} indicate our choice of the orientation of $\Delta$. Of course the above procedure works as well in the other direction.

Keeping in mind the above description, from now on we will transfer circles form $N$ to
$\fal{N}$ and vice versa without further comments.

\section{Generators for the group ${\cal{PM}^+(N_{g,s}^n)}$}

Let
%\begin{multline*}
\[
{\cal{C}}=\{a_2,\ldots,{a}_{g-1},\lst{b}{r},\lst{c}{r},\lst{e}{n+s-1},\lst{u}{s}\}\]
for $g$ odd, and
\[
{\cal{C}}=\{a_2,\ldots,{a}_{g-1},\lst{b}{r+1},\lst{c}{r},\lst{e}{n+s-1},\lst{u}{s}\}\]
for $g$ even,
%\end{multline*}
where the circles $a_i,b_i,c_i,u_i$ are as in Figures \ref{fig:010_gen_odd} and
\ref{fig:020_gen_even} and $e_i$ are as in Figure \ref{fig:030_gen_ef}(ii). Moreover,
these figures indicate the orientations of local neighbourhoods of circles in $\cal{C}$.
We did it by indicating the direction of twists about these circles. Therefore by a twist
about one of the circles in $\cal{C}$ we will always mean the twist determined by this
particular choice of orientation (the general rule is that we consider \emph{right Dehn
twists}, that is if we approach the circle of twisting we turn to the right.
%; however if $g$ is even this rule makes no sense for circles $b_{r+1}$ and $a_{g-1}$.
%
%for
%circles $b_{r+1}$ and $a_{g-1}$ this rule makes no sense if $g$ is even).
%
Define also $y$ to be a crosscap slide supported on a Klein bottle cut of by the circle
$\xi$ indicated in Figure \ref{fig:050_gen_KsiLam}.
\begin{figure}[h]
\includegraphics{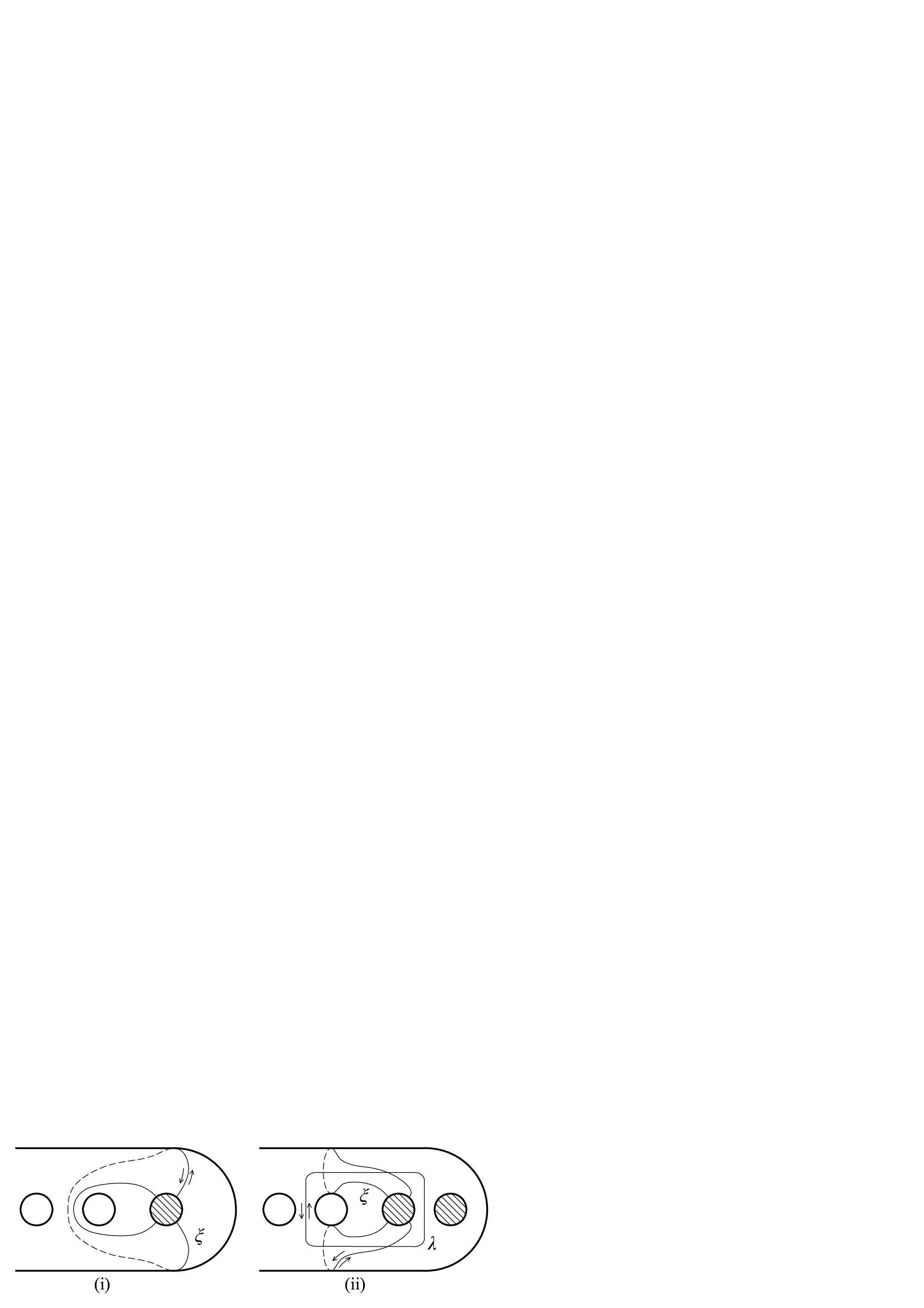}
\caption{Circles $\xi$ and $\lambda$.}\label{fig:050_gen_KsiLam}
\end{figure}
To be more precise, in terms of the model $\fal{N}$, let $C_{g-1}$ and $C_g$ be crosscaps
as in Figure~\ref{fig:090_act_y}.
\begin{figure}[h]
\includegraphics{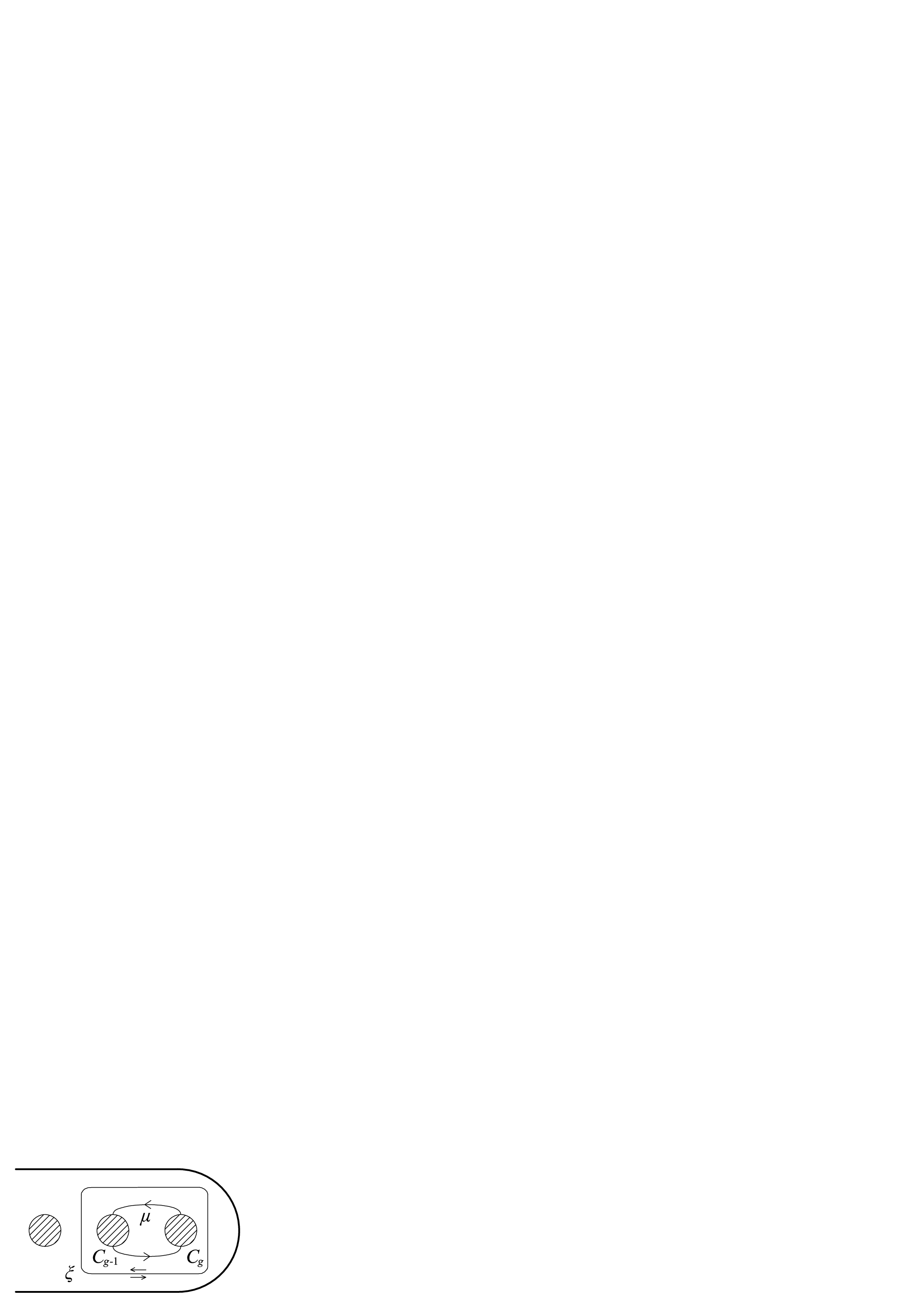}
\caption{Circle $\xi$.}\label{fig:090_act_y}
\end{figure}
The same figure shows the circle $\xi$ -- it cuts off these two crosscaps. Hence we can
define $y$ to be a slide of the crosscap $C_{g}$ along the path $\mu$ indicated in that
figure. In particular $y^2=t_\xi$.

Now we are ready to state the main theorem of this section, which is a simplification of
the known generating set for the group ${\cal{PM}^+(N_{g,s}^n)}$.

\begin{tw}\label{tw:gen:pure}
Let $g \geq 3$. Then the mapping class group ${\cal{PM}^+(N_{g,s}^n)}$ is generated by
%\begin{itemize}
$  \{t_l,y\st l\in{\cal{C}} \}$.
% \item $\{t_l,y,t_\lambda\st l\in{\cal{C}} \}$ if $g$ is even.
%\end{itemize}
\end{tw}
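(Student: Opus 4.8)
The plan is to start from a \emph{known} finite generating set for ${\cal{PM}}^+(N_{g,s}^n)$ and then reduce it to the proposed set $\{t_l,y\st l\in{\cal{C}}\}$ by expressing every ``extra'' generator as a word in the $t_l$ and $y$. Since this paper is an explicit continuation of \cite{Stukow_SurBg}, I would first invoke the generating set obtained there (or in \cite{Kork-non} for the punctured closed case) for the larger group ${\cal{PM}}(N_{g,s}^n)$ or ${\cal{M}}(N_{g,s}^n)$, and then cut it down to ${\cal{PM}}^+$. Concretely, the known generators will be a collection of Dehn twists about the circles $a_i,b_i,c_i,e_i,u_i$ together with the crosscap slide $y$ and possibly some puncture slides and additional twists (e.g.\ twists about circles $a_1$, $a_g$, or about circles encircling several punctures/crosscaps). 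The whole game is to show that each of these can be rewritten using only the circles in $\cal{C}$ and $y$.

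First I would handle the chain of twists $t_{a_1},\dots,t_{a_g}$: the set $\cal{C}$ deliberately omits $a_1$ and $a_g$, so I must recover $t_{a_1}$ and $t_{a_g}$ from the rest. For this the natural tools are the relations assembled in the Preliminaries. Using the braid-type relations among consecutive twists in the chain $a_1,\dots,a_g$ (coming from the standard orientable subsurface spanned by these circles and the $b_i,c_i$), together with the \emph{torus with a hole relation} $(t_{a_1}t_{a_2})^6=t_c$ of Proposition \ref{prop:tor:hole} and the \emph{lantern relation}, one can solve for the omitted twists in terms of the retained ones. The even-genus case $g=2r+2$ requires the extra circle $b_{r+1}$ (which is why $\cal{C}$ lists $\lst{b}{r+1}$ there), and the crosscap slide $y$ supported on the Klein bottle cut off by $\xi$ enters precisely to reach the last two crosscaps $C_{g-1},C_g$ and hence the last twist $t_{a_g}$; the relation $y^2=t_\xi$ lets me trade between $y$ and the boundary twist of that Klein bottle.

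Next I would deal with the puncture/crosscap content. The circles $\lst{e}{n+s-1}$ are chosen to connect successive holes and punctures, so twists about them, combined with the $u_i$ boundary twists, generate the part of the group that permutes neighbourhoods of punctures; because we are in ${\cal{PM}}^+$ the maps fix each puncture and preserve local orientation, so no puncture slides are needed as separate generators -- any that appear in the known generating set can be expressed via the $t_{e_i}$ and the conjugations $ft_af^{-1}=t_{f(a)}$ of Proposition \ref{pre:prop:twist:cong}. Throughout, the homeomorphism $\Phi$ of Section \ref{sec:mod2} is the bookkeeping device that lets me transfer a circle appearing in the old generating set to the model $\fal N$ and recognise it, up to the action of the subgroup already generated, as one of the circles in $\cal{C}$; Proposition \ref{pre:prop:twist:cong} then identifies the corresponding twist as a conjugate of a retained twist.

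The main obstacle I expect is the careful elimination of $t_{a_1}$ and $t_{a_g}$ (the two endpoints of the chain) simultaneously with the verification that the crosscap slide $y$ really does let us cross the last crosscap. This is where orientation bookkeeping is most delicate: on a nonorientable surface there is no global orientation, so each relation (lantern, torus-with-hole, braid) is only valid once the orientations of the regular neighbourhoods are fixed consistently, and $y$ reverses local orientation as it drags a crosscap. I would therefore spend most of the effort checking, via the signs recorded in Figures \ref{070_cut_odd} and \ref{080_cut_even}, that the conjugation formulas of Propositions \ref{pre:prop:twist:cong} and \ref{pre:prop:Yhomeo:cong} are applied with the correct induced orientations, so that the resulting words genuinely lie in the twist-and-$y$ subgroup rather than differing by an inverse. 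Once that sign consistency is established, the reduction itself is a finite sequence of substitutions and the theorem follows.
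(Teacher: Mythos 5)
Your overall strategy -- start from the known generating set of \cite{Stukow_SurBg} and eliminate the extra generators -- is the right one, but you have misidentified which generators actually need to be eliminated, and you give no workable mechanism for the elimination that is really required. Theorem 5.2 of \cite{Stukow_SurBg} already presents ${\cal{PM}}^+(N_{g,s}^n)$ as generated by $y$ (or a conjugate of it) together with twists about the circles of ${\cal{C}}'={\cal{C}}\cup\{f_1,\ldots,f_{n+s}\}$, plus $t_\lambda$ for $g$ even; the twists $t_{a_1}$ and $t_{a_g}$ are \emph{not} among the known generators, so the ``main obstacle'' you plan to spend most of your effort on -- recovering $t_{a_1}$ and $t_{a_g}$ via braid, lantern and torus--with--hole relations -- is a non-issue. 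No lantern or torus--with--hole relation is used anywhere in this proof.

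The actual content of the proof is the elimination of the twists $t_{f_1},\ldots,t_{f_{n+s}}$ (the $f_i$ are the circles encircling a puncture or hole together with the crosscaps) and of $t_\lambda$ in the even case, and this is precisely the step you dispose of in one vague sentence (``any that appear \ldots can be expressed via the $t_{e_i}$ and the conjugations''). That assertion is not a proof and does not suggest one. The missing idea is the introduction of auxiliary circles $h_1,\ldots,h_{n+s}$ whose twists are already known to lie in the subgroup $G=\gen{t_l,y\st l\in{\cal{C}}}$ (by Lemma 3.3 of \cite{Stukow_SurBg}), together with an \emph{explicit} word $w$ in the retained generators such that $f_i=w(h_i)$ -- for instance $f_i=t_{c_{r}}^{-1}t_{a_{g-1}}^{-2}y^{-1}t_{b_{r}}(h_i)$ for $g$ odd -- whence $t_{f_i}=w\,t_{h_i}^{\pm1}w^{-1}\in G$ by Proposition \ref{pre:prop:twist:cong}; similarly $t_\lambda\in G$ follows from the relation $\lambda=t_{b_{r+1}}y(a_{g-2})$. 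Verifying these circle identities by transferring circles between the two models of $N$ is the real work, and it is absent from your proposal.
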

\begin{proof}
By Theorem 5.2 of \cite{Stukow_SurBg} and by Propositions \ref{pre:prop:twist:cong} and \ref{pre:prop:Yhomeo:cong}, the group ${\cal{PM}^+(N_{g,s}^n)}$ is generated by
\begin{itemize}
 \item $\{t_l,y\st l\in{\cal{C'}} \}$ if $g$ is odd
 \item $\{t_l,(t_{a_{g-2}}t_{a_{g-1}})^{-1}y(t_{a_{g-2}}t_{a_{g-1}}),t_{b_r}^{-1}t_\lambda t_{b_r}\st l\in{\cal{C'}}\}$ if $g$ is even,
\end{itemize}
where $\lambda$ is as in Figure~\ref{fig:050_gen_KsiLam}(ii) and
%\begin{multline*}
\[{\cal{C'}}={\cal{C}}\cup \{ f_1,\ldots,f_{n+s}\} \]
%{\cal{C'}}=\{a_2,\ldots,{a}_{g-1},\lst{b}{r},\lst{c}{r},\lst{e}{n+s-1},\\
%f_1,\ldots,f_{n+s},\lst{u}{s}\}
%\end{multline*}
for
$\lst{f}{n+s}$ as in Figure \ref{fig:040_gen_ff}(i)
(Figure \ref{fig:040_gen_ff}(i) defines $f_i$ for $g$ even; for $g$ odd just forget about
the second crosscap).
\begin{figure}[h]
\includegraphics{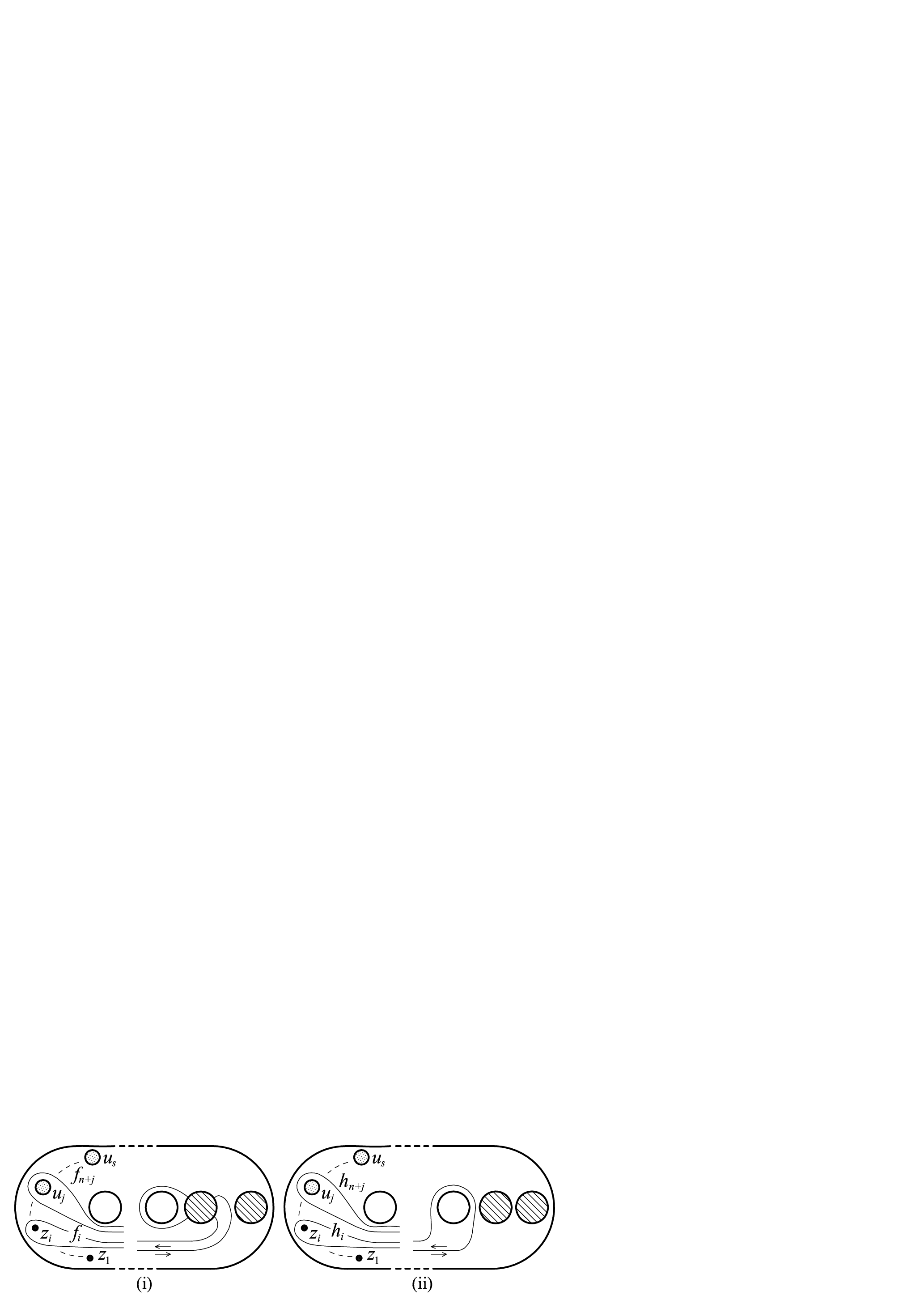}
\caption{Circles $f_i$ and $h_i$.}\label{fig:040_gen_ff}
\end{figure}

Therefore to complete the proof it is enough to show that if $G=\gen{t_l,y\st l\in{\cal{C}}}$ then $t_{f_1},\ldots,t_{f_{n+s}}\in G$, and $t_\lambda\in G$ for $g$ even.

Let $\lst{h}{n+s}$ be circles as in Figure \ref{fig:040_gen_ff}(ii) (as before, for $g$ odd forget
about the second crosscap).
%\begin{figure}[h]
%\includegraphics{041_gen_vv.eps}
%\caption{Circles $h_i$.}\label{fig:041_gen_vv}
%\end{figure}
We claim that
\begin{itemize}
\item $f_i=t_{c_{r}}^{-1}t_{a_{g-1}}^{-2}y^{-1}t_{b_{r}}(h_i)$ if $g$ is odd,
\item $f_i=y^{-1}t_{b_{r+1}}^{-1}t_{a_{g-1}}^{-1}t_\lambda t_{a_{g-2}}^{-1}t_{a_{g-1}}^{-1}t_{b_{r}}t_{a_{g-2}}^2t_{b_{r}}(h_i)$ if $g$ is even.
\end{itemize}
In fact, using the procedure of transferring circles between two models $N$ and $\fal{N}$
described in Section \ref{sec:mod2}, it is not difficult to check that Figure~\ref{fig:095_h_i}(i) shows the circle $t_{b_r}(h_i)$ on $\fal{N}$ and Figure~\ref{fig:095_h_i}(ii) shows $y^{-1}(t_{b_r}(h_i))$. Then transferring this circle back to $N$ easily leads to the first of the above relations. The second one can be proved analogously.
\begin{figure}[h]
\includegraphics{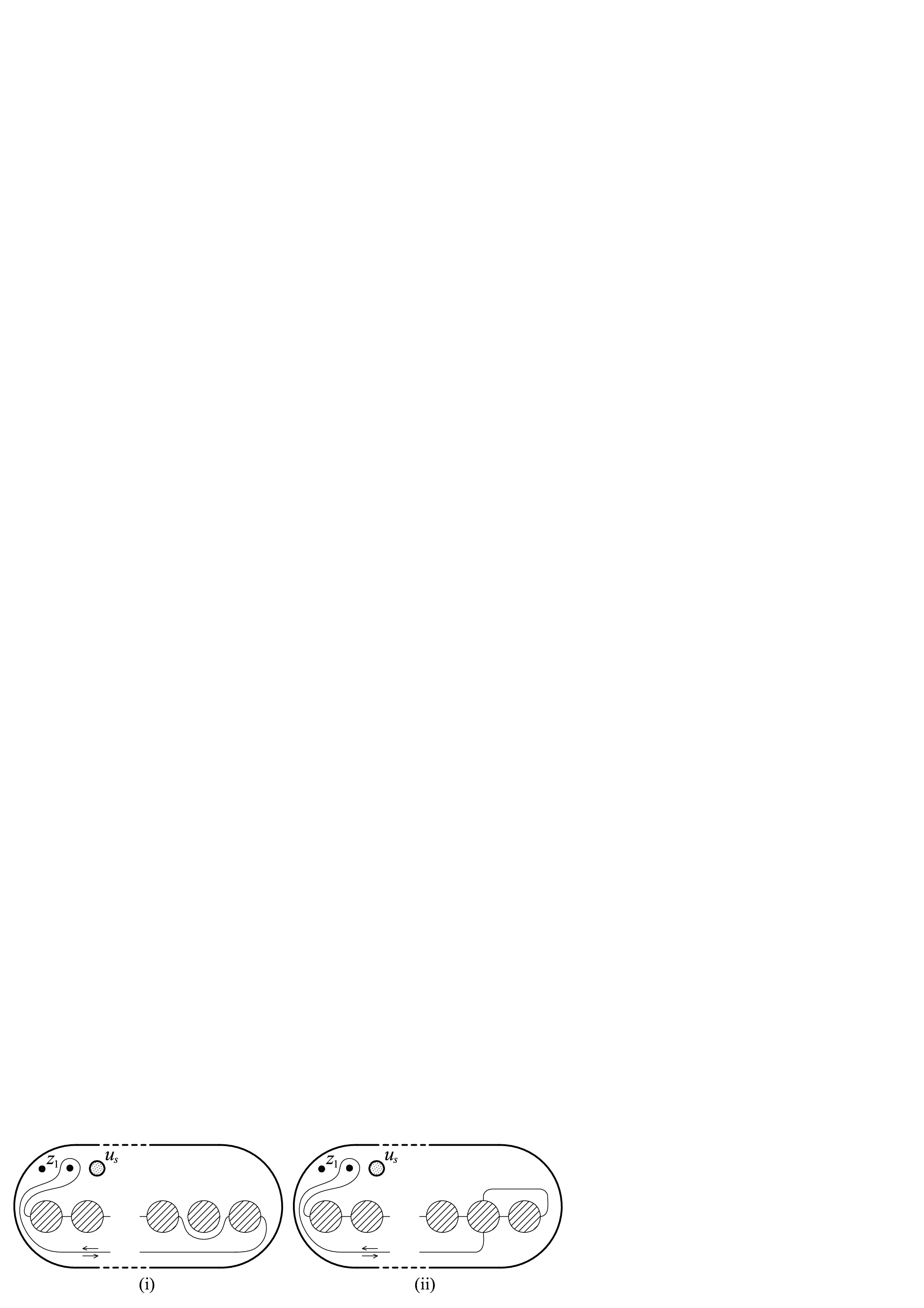}
\caption{Circles $t_{b_r}(h_i)$ and $y^{-1}t_{b_r}(h_i)$ for $g$ odd.}\label{fig:095_h_i}
\end{figure}

By Lemma 3.3 of \cite{Stukow_SurBg}, $t_{h_i}\in G$ for $i=1,\ldots,n+s$. Hence by Proposition
\ref{pre:prop:twist:cong}, $t_{f_i}\in G$ provided $t_\lambda\in G$. But this follows from
the relation
\[\lambda=t_{b_{r+1}}y (a_{g-2}). \]
%
%as well. It remains to prove that $t_\lambda\in G$ if $g$ is even.
\end{proof}

\section{The action of $\cal{M}(N_{g,s}^n)$ on $\mathrm{H_1}(N_{g,s}^n,\rr)$}\label{sec:act}%
It is known that $\mathrm{H_1}(N,\rr)$ has a basis in which every linear map $\map{f_*}{\mathrm{H_1}(N,\rr)}{\mathrm{H_1}(N,\rr)}$, induced by a diffeomorphism $\map{f}{N}{N}$,
%Using one of the standard basis for $\mathrm{H_1}(N,\zz)$ it is straightforward to check that
%%It is well known that the linear space $\mathrm{H_1}(N,\rr)$ has a base $\cal{B}$
%%consisting of homology classes of two--sided circles. Moreover, in every such base
%the matrix of a linear map $\map{f_*}{\mathrm{H_1}(N,\rr)}{\mathrm{H_1}(N,\rr)}$, induced by
%any diffeomorphism $\map{f}{N}{N}$,
has a matrix with integral coefficients. Therefore we can define the
\emph{determinant homomorphism} $\map{D}{{\cal{PM}}^+(N)}{\zz_2}$ as follows: $D(f)=\det(f_*)$
%if
%$\det(f_*)=1$ and $D(f)=1$ if $\det(f_*)=-1$
(we use the multiplicative notation for the group $\zz_2$).
\begin{lem}\label{lem:ac:tw:nonsep}
Let $c$ be a two--sided nonseparating circle on $N$. Then $D(t_c)=1$.
\end{lem}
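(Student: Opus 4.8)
For a two-sided nonseparating circle $c$ on $N$, $D(t_c)=1$.

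The plan is to exploit the fact that the determinant homomorphism $D$ is a class function, since $\det$ is invariant under conjugation: for any diffeomorphism $f$, Proposition \ref{pre:prop:twist:cong} gives $ft_cf^{-1}=t_{f(c)}$, and because $D$ lands in the abelian group $\zz_2$ we have $D(t_{f(c)})=D(ft_cf^{-1})=D(f)D(t_c)D(f)^{-1}=D(t_c)$. Thus $D(t_c)$ depends only on the topological type of the pair $(N,c)$, i.e. on how $c$ sits in the surface up to the action of the mapping class group. So the first step is to reduce the statement to a single convenient model circle.

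The second step is the classification of two-sided nonseparating circles. The key point is that, since $\cal{M}(N)$ acts transitively on isotopy classes of circles of a fixed topological type, a two-sided nonseparating circle $c$ is carried by some $f$ to one of a small number of standard circles; the relevant invariant is whether the complement $N\bez c$ is orientable or not (equivalently whether cutting along $c$ lowers the nonorientable genus), but in either case one can pin $c$ down to a concrete circle in the model of Section \ref{sec:mod2} --- for instance one of the $b_i$ (whose complement stays nonorientable) or one of the $a_i$-type circles --- for which the induced map on $\mathrm{H_1}(N,\rr)$ can be written explicitly. By the conjugation invariance just established, it suffices to compute $D(t_c)$ for these finitely many representatives.

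The third and computational step is to evaluate $\det(t_{c*})$ on $\mathrm{H_1}(N,\rr)$ for a standard representative. Here I would use the integral basis provided by the action section together with the standard transvection formula: a Dehn twist about a two-sided circle acts on homology by $x\mapsto x+(\hat{\imath}(x,c))\,[c]$ for a suitable algebraic intersection pairing $\hat\imath$, so $t_{c*}$ is a transvection and its matrix is unipotent, whence $\det(t_{c*})=1=D(t_c)$. The main obstacle is honestly the nonorientable bookkeeping: the intersection form on $\mathrm{H_1}(N,\rr)$ is not the usual symplectic form and the ``transvection'' is with respect to a twisted pairing, so I must make sure the transvection description is valid in this basis and that $[c]$ is a genuine homology class (which needs $c$ two-sided, exactly our hypothesis). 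Once the unipotent-matrix form is secured in the chosen basis, the determinant is forced to be $1$, and conjugation invariance finishes the general case.
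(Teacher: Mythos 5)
Your reduction steps coincide with the paper's: $D$ is conjugation-invariant on $\mathrm{Aut}(\mathrm{H_1}(N,\rr))$, and the change-of-coordinates principle (two-sided nonseparating circles are classified by whether the complement is orientable) pins $c$ down to $a_1$, or to $b_{r+1}$ when $g$ is even. One small caution the paper makes explicit and you gloss over: the diffeomorphism carrying $c$ to the model circle need not fix $\partial N$, so it need not lie in ${\cal{PM}}^+(N)$ and you cannot literally write $D(f)D(t_c)D(f)^{-1}$; but conjugacy of $(t_{c_1})_*$ and $(t_{c_2})_*$ inside $\mathrm{Aut}(\mathrm{H_1}(N,\rr))$ is all that is needed, so this is cosmetic. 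Where you genuinely diverge is the endgame: the paper finishes by an explicit matrix computation for the two model circles (which it declares easy and omits), whereas you invoke a general transvection argument. If your argument is completed it is actually stronger --- it proves $D(t_c)=1$ for every two-sided circle at once, subsuming Lemma \ref{lem:ac:tw:sep} and Proposition \ref{prop:det:tw} and making the conjugation reduction unnecessary.

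The one real gap is the assertion that the matrix of $t_{c*}$ is unipotent; you flag the ``nonorientable bookkeeping'' but do not discharge it, and this is precisely the point where the orientable intuition fails. The map has the form $x\mapsto x+\ell(x)[c]$ with $\ell=\pm I(c,\cdot)$, and $\det\bigl(\mathrm{id}+\ell(\cdot)[c]\bigr)=1+\ell([c])$; in the orientable case $\ell([c])=0$ for free by antisymmetry of the symplectic form, but here there is no such form, so unipotency is exactly the claim $I(c,[c])=0$, which must be argued. Two observations close it: first, because $c$ is two-sided, the signed crossing number $I(c,\cdot)$ is the degree of the map $N\to S^1$ collapsing the complement of an annular neighbourhood of $c$, hence is a genuine class in $H^1(N;\zz)$ and in particular a well-defined linear functional on $\mathrm{H_1}(N,\rr)$ (the paper pointedly refuses to assume this, saying $I(c,a)$ ``just counts the points $c\cap a$ with appropriate signs''); second, $I(c,[c])=I(c,c')=0$ for a disjoint parallel push-off $c'$ of $c$ inside the annulus. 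With these two lines added, your proof is complete and arguably cleaner than the computation the paper omits.
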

\begin{proof}
It is an easy topological fact, that if $c_1$ and $c_2$ are two--sided, nonseparating
circles in $N$ such that both $N\bez c_1$ and $N\bez c_2$ are either orientable or
nonorientable, then $f(c_1)=c_2$ for some diffeomorphism of $N$. Let us stress the fact
that $f$ may not be the identity on $\partial N$, hence we can not assume that $f$
induces an element of the group $\cal{M}(N)$, however this is of no importance to what follows. In particular, $(t_{c_1})_*$ and
$(t_{c_2})_*$ are conjugate in ${\mathrm{Aut}}({\mathrm{H_1}(N,\rr)})$, hence $D(t_{c_1})=D(t_{c_2})$.
Moreover, if $g$ is odd then there is no nonseparating two--sided circle $c$ on $N$ such that $N\bez
c$ is orientable. Therefore, to prove the lemma it is enough to show that
$D(t_{a_1})=1$ and that $D(t_{b_{r+1}})=1$ if $g$ is even. This can be easily done -- we
skip the computations.
\end{proof}
\begin{lem} \label{lem:ac:tw:sep}
Let $c$ be a two--sided separating circle on $N$. Then
$\map{(t_c)_*}{\mathrm{H_1}(N,\rr)}{\mathrm{H_1}(N,\rr)}$ is the identity map.
\end{lem}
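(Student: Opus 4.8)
The plan is to show that a Dehn twist about a two--sided separating circle $c$ acts trivially on $\mathrm{H_1}(N,\rr)$. The guiding principle is the standard fact for Dehn twists: the action of $t_c$ on homology is given by the transvection formula $(t_c)_*(x)=x\pm\langle x,[c]\rangle[c]$, where $\langle\cdot,\cdot\rangle$ is the algebraic intersection pairing and $[c]$ is the class of $c$. If $c$ separates $N$, then $c$ is null-homologous, so $[c]=0$ in $\mathrm{H_1}(N,\rr)$, and the formula immediately gives $(t_c)_*=\mathrm{id}$. So at the level of intuition the lemma is essentially immediate.

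The subtlety is that the transvection formula is usually stated for \emph{orientable} surfaces with the (skew-symmetric) algebraic intersection form, whereas here $N$ is nonorientable and we are working with $\mathrm{H_1}(N,\rr)$, which need not carry a nondegenerate symplectic pairing. Thus the first and most careful step is to verify that $c$ being separating forces $[c]=0$ in $\mathrm{H_1}(N,\rr)$. This I would get from the long exact sequence of the pair, or more directly from the Mayer--Vietoris decomposition $N=N_1\cup_c N_2$: the class $[c]$ is a boundary of each piece, hence maps to zero. Alternatively, $c$ bounds (it is the common boundary of $N_1$ and $N_2$, so as a cycle it equals $\partial[N_1]$ up to the orientability issues), giving $[c]=0$.

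With $[c]=0$ established, the second step is to justify that the action is trivial \emph{without} appealing to a symplectic formula. The clean way is to work with a regular neighbourhood: $t_c$ is supported on an annulus $A$ that is a neighbourhood of $c$, and outside $A$ the map is the identity. Any $1$-cycle can be pushed off $A$ up to homology, or rather, one writes the class of an arbitrary loop and checks that the correction term introduced by $t_c$ is a multiple of $[c]=0$. Concretely, for a loop $\gamma$ meeting $c$ transversely, $t_c$ replaces $\gamma$ by $\gamma$ with $\langle\gamma,c\rangle$ copies of $c$ spliced in, so $[(t_c)(\gamma)]=[\gamma]+\langle\gamma,c\rangle[c]=[\gamma]$. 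This reasoning is purely local and does not require orientability of $N$, only the existence of a consistent orientation on the annulus $A$ (which exists since $c$ is two--sided).

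The main obstacle I anticipate is being precise about the intersection number and the splicing argument in the nonorientable setting, where there is no global orientation and hence the geometric intersection count does not directly produce a well-defined signed algebraic intersection with a consistent sign. The fix is to note that we only need the \emph{homology class} added by each intersection to be $[c]$ (up to sign), and since $[c]=0$ the ambiguous signs are irrelevant. So rather than resolving orientation conventions, I would phrase the argument as: $(t_c)_*(x)-x$ lies in the span of $[c]$ for every $x$, and $[c]=0$; therefore $(t_c)_*=\mathrm{id}$. This sidesteps the nonorientability issue entirely, which is exactly why this lemma is much easier than the nonseparating case handled in Lemma~\ref{lem:ac:tw:nonsep}.
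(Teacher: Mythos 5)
Your argument hinges on the claim that a separating circle on $N$ is null-homologous over $\rr$, and this is exactly the step that fails on a nonorientable surface. Concretely, take $c=\xi$, the circle cutting off a Klein bottle with one hole --- a curve for which the paper genuinely needs this lemma, since $t_\xi=y^2$. Writing $x_1,\dotsc,x_g$ for the classes of the cores of the crosscaps, one has $[\xi]=2(x_{g-1}+x_g)$; in the closed case $\mathrm{H_1}(N_g,\rr)\cong\rr^g/\rr(x_1+\cdots+x_g)$, and $2(x_{g-1}+x_g)$ is not a multiple of $x_1+\cdots+x_g$ once $g\geq 3$, so $[\xi]\neq 0$. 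The same happens for the boundary of an embedded M\"obius band, which represents $2x_1\neq 0$. Your Mayer--Vietoris justification breaks because a \emph{nonorientable} piece $N_1$ has $\mathrm{H_2}(N_1,\partial N_1;\rr)=0$, so $c$ does not bound in $N_1$; ``separating'' forces $[c]=0$ only with $\zz_2$ coefficients. Since your final reduction is ``$(t_c)_*(x)-x$ lies in the span of $[c]$, and $[c]=0$'', the proof collapses precisely for the circles that matter (it survives only when $c$ bounds an orientable subsurface).

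The fix --- and the paper's actual argument --- is to put the zero in the other factor. Fix an orientation of the annulus neighbourhood of $c$ and define $I(c,a)$ by counting signed crossings; then $[t_c(a)]=[a]\pm I(c,a)[c]$, and $I(c,a)=0$ for \emph{every} circle $a$ because a closed curve must cross a separating circle from one complementary component to the other equally often in each direction. This coefficient argument uses the separating hypothesis where it actually bites and works whether or not $[c]$ vanishes; one then concludes because homology classes of circles span $\mathrm{H_1}(N,\rr)$. Your transvection set-up is fine up to that point --- you only need to replace ``$[c]=0$'' by ``$I(c,a)=0$''.
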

\begin{proof}
Since $c$ is two--sided, we can fix an orientation of a regular neighbourhood of $c$.
Therefore for any circle $a$ which is transversal to $c$, we can define the algebraic
intersection number $I(c,a)$ in a usual way (we do not claim that $I(c,a)$ has any
particular properties -- it just counts the points $c\cap a$ with appropriate signs). By
the definition of a twist, it is obvious that
\[[t_c(a)]=[a]\pm I(c,a)[c]\]
where $[x]$ denotes the homology class of $x$. Moreover, it is clear that if $c$ is
separating then $I(c,a)=0$ for any circle $a$. Therefore
\[[t_c(a)]=[a]\] and the lemma follows by the fact that homology classes of circles span
${\mathrm{H_1}(N,\rr)}$.
\end{proof}
By Lemmas \ref{lem:ac:tw:nonsep} and \ref{lem:ac:tw:sep} we obtain
\begin{prop}\label{prop:det:tw}
Let $c$ be a two--sided circle on $N$. Then \mbox{$D(t_c)=1$}.\qed
\end{prop}
The explicit definition of $y$ made in the previous section easily leads to the following
\begin{prop}\label{lem:ac:cross}
Let $\map{D}{{\cal{PM}^+(N)}}{\zz_2}$ as above. Then \mbox{$D(y)=-1$}.\qed
\end{prop}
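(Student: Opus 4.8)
The plan is to reduce the computation of $D(y)=\det(y_*)$ to the action of $y$ on a small, explicit piece of $\mathrm{H_1}(N,\rr)$ and then to determine a single sign. Recall that $y$ is supported on the Klein bottle $K$ cut off by the two--sided separating circle $\xi$, and is the identity outside $K$. Consequently, for any $1$--cycle $z$ the difference $y(z)-z$ is carried by $K$, so $[y(z)]-[z]$ lies in the subspace $V$ which is the image of $\mathrm{H_1}(K,\rr)\to\mathrm{H_1}(N,\rr)$. Since $K$ is a Klein bottle with one hole, $\mathrm{H_1}(K,\rr)$ is spanned by the classes $[a_{g-1}],[a_g]$ of the cores of the two crosscaps $C_{g-1},C_g$ lying inside $K$, so $V=\gen{[a_{g-1}],[a_g]}$. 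Because $y(K)=K$ we have $y_*(V)=V$, and because $y_*\equiv\mathrm{id}$ modulo $V$, in an integral basis extending a basis of $V$ the matrix of $y_*$ is block triangular with lower--right block the identity. Hence $\det(y_*)=\det(y_*|_V)$, and everything comes down to the $2\times2$ matrix of $y_*$ on $V$.

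Before computing that matrix I would record one constraint for free. Since $\xi$ is separating, Lemma \ref{lem:ac:tw:sep} gives $(t_\xi)_*=\mathrm{id}$, and from $y^2=t_\xi$ we obtain $y_*^2=\mathrm{id}$. Thus $y_*|_V$ is an involution and $\det(y_*|_V)\in\{+1,-1\}$; equivalently $D(y)^2=D(t_\xi)=1$, which is consistent with $D$ taking values in $\zz_2$ but does \emph{not} by itself fix the sign. Deciding whether the determinant is $+1$ or $-1$ is the genuine content of the statement.

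To pin down the sign I would compute $y_*([a_{g-1}])$ and $y_*([a_g])$ directly from the definition of the crosscap slide, transferring the two circles (and the orientations of their neighbourhoods) between the models $N$ and $\fal{N}$ by the procedure of Section \ref{sec:mod2}. Sliding the crosscap $C_g$ once along the path $\mu$ returns its core $a_g$ to itself but with a reversed orientation of a regular neighbourhood, while it preserves the axis of the slide; a direct check then yields $y_*([a_{g-1}])=[a_{g-1}]$ and $y_*([a_g])=-[a_g]+k[a_{g-1}]$ for some $k\in\zz$, i.e.
\[
y_*|_V=\begin{pmatrix} 1 & k \\ 0 & -1\end{pmatrix}.
\]
This has eigenvalues $+1$ and $-1$, so $\det(y_*|_V)=-1$ (and it squares to the identity, in agreement with the previous paragraph). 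Combined with the reduction $\det(y_*)=\det(y_*|_V)$, this gives $D(y)=-1$.

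The main obstacle is precisely the orientation bookkeeping in the last step. The algebraically cheap relation $y^2=t_\xi$ only yields $\det(y_*)=\pm1$, so the whole point is to verify honestly that the crosscap slide reverses the local orientation along the core of the sliding crosscap, producing the eigenvalue $-1$ rather than leaving $y_*|_V=\pm I$. Since $N$ is nonorientable there is no global orientation to lean on, so I would carry out this sign check carefully on the cut--open polygon $\Delta$, keeping track of the plus and minus markings as in Figures \ref{070_cut_odd} and \ref{080_cut_even}; this is what makes the otherwise routine reduction nontrivial.
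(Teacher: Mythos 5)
Your overall strategy is exactly what the paper intends: the paper offers no written proof (it states that the explicit definition of $y$ ``easily leads to'' the proposition), and your reduction is the natural way to flesh that out. The reduction itself is sound: since $y$ is supported on the Klein bottle $K$ bounded by $\xi$, the map $y_*$ is the identity modulo the $2$--dimensional subspace $V$ spanned by the classes $\mu_{g-1},\mu_g$ of the two crosscap cores (note these are \emph{not} the circles $a_{g-1},a_g$ of the paper, which are two--sided; your relabelling clashes with the paper's notation), it preserves $V$, the matrix is block triangular with identity lower block, and $\det(y_*)=\det(y_*|_V)$. The observation that $y_*^2=(t_\xi)_*=\mathrm{id}$ via Lemma \ref{lem:ac:tw:sep} is also correct and useful.

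There is, however, a concrete error in the one step you yourself single out as the genuine content. Since $y$ restricts to the identity on $\xi=\partial K$, the class $[\xi]=2(\mu_{g-1}+\mu_g)$ is fixed by $y_*$, so $y_*|_V$ must fix $\mu_{g-1}+\mu_g$. Your asserted matrix sends $\mu_{g-1}+\mu_g$ to $(1+k)\mu_{g-1}-\mu_g$, which can never equal $\mu_{g-1}+\mu_g$; so the ``direct check'' as stated cannot be right, and in particular $\mu_{g-1}$ is not the fixed core. Fortunately the conclusion survives, and the cleanest repair avoids computing the matrix at all: $y_*|_V$ is an involution of a $2$--dimensional space fixing the nonzero vector $\mu_{g-1}+\mu_g$, hence is either the identity (determinant $+1$) or has eigenvalues $\{+1,-1\}$ (determinant $-1$). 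So the only thing to verify is that $y_*|_V\neq\mathrm{id}$, and this follows without any figure chasing: $y$ reverses the orientation of a regular neighbourhood of $a_{g-1}$, so $y t_{a_{g-1}}y^{-1}=t_{a_{g-1}}^{-1}$ by Proposition \ref{pre:prop:twist:cong}, whence $y_*(t_{a_{g-1}})_*y_*^{-1}=(t_{a_{g-1}})_*^{-1}$; as $(t_{a_{g-1}})_*$ has infinite order it is not its own inverse, so $y_*\neq\mathrm{id}$, and since $y_*$ is the identity modulo $V$ and on the complement, $y_*|_V\neq\mathrm{id}$. This gives $D(y)=\det(y_*|_V)=-1$ with the sign bookkeeping replaced by an algebraic argument.
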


\section{Generators for the twist subgroup}
The main goal of this section is to find a simple generating set for the twist subgroup ${\cal{T}}(N)$.
Our main tool will be the following well known fact from combinatorial group theory -- see for example Chapter 9 of~\cite{Johnson}.
\begin{prop}\label{prop:Joh}
Let $X$ be a generating set for a group $G$ and let $U$ be a left transversal for a
subgroup $H$. Then $H$ is generated by the set
\[\{ux\kre{ux}^{-1}\, :\, u\in U, x\in X, ux\not\in U\},\]
where $\kre{g}=gH\cap U$ for $g\in G$. \qed\end{prop}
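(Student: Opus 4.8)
The plan is to prove this directly by the Reidemeister--Schreier rewriting process. Write $K$ for the subgroup generated by the indicated set. Since in each pair $(u,x)$ the elements $ux$ and $\kre{ux}$ represent the same coset, every generator $ux\kre{ux}^{-1}$ lies in $H$, and hence $K\podz H$; the whole content of the proposition is the reverse inclusion $H\podz K$. Throughout I would normalise the transversal so that $1\in U$, which forces $\kre{1}=1$ and, more generally, $\kre{h}=1$ for every $h\in H$ (since $h$ then represents the coset of the identity).

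Now fix an arbitrary $h\in H$. Because $X$ generates $G$, I can write $h=x_1^{\eps_1}\cdots x_m^{\eps_m}$ with each $x_i\in X$ and $\eps_i\in\{+1,-1\}$. Set $u_0=1$ and define recursively $u_i=\kre{u_{i-1}x_i^{\eps_i}}$ for $1\le i\le m$. The key formal identity is the telescoping factorisation
\[
h=\prod_{i=1}^{m}\bigl(u_{i-1}x_i^{\eps_i}u_i^{-1}\bigr)\cdot u_m,
\]
whose right-hand side collapses to $u_0\,x_1^{\eps_1}\cdots x_m^{\eps_m}\,u_m^{-1}u_m=u_0h=h$ because $u_0=1$. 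A straightforward induction shows that $u_i$ always lies in the coset of $x_1^{\eps_1}\cdots x_i^{\eps_i}$; in particular $u_m=\kre{h}=1$, so $h$ is exactly the displayed product of the bracketed terms.

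It remains to recognise each bracketed factor $u_{i-1}x_i^{\eps_i}u_i^{-1}$ as a generator of $K$ or its inverse. When $\eps_i=+1$ this factor is literally $u_{i-1}x_i\kre{u_{i-1}x_i}^{-1}$, which is one of the listed generators (or the identity, when $u_{i-1}x_i\in U$). When $\eps_i=-1$ I would pass to the inverse, $(u_{i-1}x_i^{-1}u_i^{-1})^{-1}=u_i x_i u_{i-1}^{-1}$, and invoke the bookkeeping identity $\kre{u_i x_i}=u_{i-1}$: indeed $u_i=\kre{u_{i-1}x_i^{-1}}$ places $u_i$ in the coset of $u_{i-1}x_i^{-1}$, so $u_i x_i$ lies in the coset of $u_{i-1}\in U$, whence $\kre{u_i x_i}=u_{i-1}$. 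Thus this inverse equals $u_i x_i\kre{u_i x_i}^{-1}$, again a listed generator (or the identity). Every factor therefore lies in $K$, so $h\in K$, which gives $H\podz K$ and completes the proof.

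The main obstacle --- really the only point requiring care --- is this last bookkeeping step for the letters read with exponent $-1$: one must verify that the representative function $\kre{\cdot}$ is consistent under the passage from $u_{i-1}x_i^{-1}$ to $u_i$, so that inverse letters genuinely contribute inverses of the prescribed generators rather than new elements of $H$. Everything else reduces to the formal telescoping above together with the normalisation $\kre{h}=1$ for $h\in H$.
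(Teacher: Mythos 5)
The paper gives no proof of this proposition at all---it simply points to Johnson's book---and your argument is precisely the standard Schreier/Reidemeister--Schreier proof that reference contains: the telescoping factorisation, the normalisation $\kre{h}=1$ for $h\in H$, and the bookkeeping identity $\kre{u_ix_i}=u_{i-1}$ for letters read with exponent $-1$ are all correct and complete, so you have supplied exactly the proof the paper delegates to the literature. One caveat: every one of those steps tacitly treats $\kre{g}$ as the representative of the \emph{right} coset $Hg$ (so that $g\kre{g}^{-1}\in H$ and $u_i\in Hx_1^{\eps_1}\cdots x_i^{\eps_i}$), whereas the statement as printed defines $\kre{g}=gH\cap U$; under that left--coset reading your very first assertion, that $ux\kre{ux}^{-1}\in H$, already fails ($ux\kre{ux}^{-1}$ is then only a conjugate of an element of $H$), and indeed the proposition is false as literally written (take $G=S_3$, $H=\gen{(1\,2)}$, $U=\{e,(1\,3),(2\,3)\}$, $u=e$, $x=(1\,2\,3)$: the resulting ``generator'' is $(2\,3)\notin H$). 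This is a slip in the statement's conventions rather than in your argument---and it is harmless in the paper, where the result is only applied to a subgroup of index~$2$, for which left and right cosets coincide---but your write-up should say explicitly that it works with $\kre{g}\in Hg$. A second, minor point: normalising the transversal so that $1\in U$ changes the transversal and hence the listed generating set, so either justify that reduction or run the same telescoping with $u_0=\kre{1}\in H$ and observe that it exhibits $u_0hu_0^{-1}$ in the subgroup $K$ generated by the listed elements, which still yields $H\podz K$ because $u_0\in H$.
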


Let ${\fal{\cal{T}}}(N)$ denote the kernel of the homomorphism \[\map{D}{{\cal{PM}^+(N)}}{\zz_2}\] defined
in Section \ref{sec:act}. The reason for our choice of notation will become apparent after Corollary
\ref{wn:T:eq:falT} below, where we will prove that in fact $\fal{\cal{T}}(N)={\cal{T}}(N)$ is the twist
subgroup.
\begin{tw}\label{tw:gen:T}
Let $g\geq 3$. Then the group ${\fal{\cal{T}}}(N)$ is generated by
\begin{itemize}
 \item $\{t_l\st l\in {\cal{C}}\cup\{f_1,\ldots,f_{n+s-1},\xi\} \}$ if $g=3$,
 \item $\{t_l\st l\in {\cal{C}}\cup\{\psi,\xi \} \}$ if $g\geq 5$ odd,
 \item $\{t_l\st l\in{\cal{C}}\cup\{\lambda,\psi,\xi\} \}$ if $g$ even,
\end{itemize}
where $\psi$ is the two--sided circle indicated in Figure \ref{fig:100_genT_Psi}, $\lst{f}{n+s-1}$ are
as in Figure \ref{fig:040_gen_ff}(i), $\lambda$ is as in Figure \ref{fig:050_gen_KsiLam}(ii)
and $\xi$ is as in Figure \ref{fig:050_gen_KsiLam}.
\end{tw}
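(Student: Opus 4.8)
The plan is to apply Proposition \ref{prop:Joh} (the Reidemeister–Schreier type statement) to the index-two subgroup $\fal{\cal{T}}(N) = \ker D$ inside $G = {\cal{PM}}^+(N_{g,s}^n)$. By Theorem \ref{tw:gen:pure}, $G$ is generated by $X = \{t_l, y \st l \in {\cal{C}}\}$. Since $\fal{\cal{T}}(N)$ has index $2$, I would take $U = \{1, y\}$ as a left transversal: indeed $D(y) = -1$ by Proposition \ref{lem:ac:cross}, so $y \notin \fal{\cal{T}}(N)$, while $D(t_l) = 1$ for all $l \in {\cal{C}}$ by Proposition \ref{prop:det:tw}. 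The coset representative map is then $\kre{g} = 1$ if $D(g)=1$ and $\kre{g} = y$ otherwise.

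Next I would run through the Schreier generators $ux\kre{ux}^{-1}$ for $u \in U$, $x \in X$. For $x = t_l$ with $l \in {\cal{C}}$: the element $ut_l$ has the same determinant as $u$, so $\kre{ut_l} = u$, giving generators $t_l$ (from $u=1$) and $yt_ly^{-1}$ (from $u=y$); by Propositions \ref{pre:prop:twist:cong} and \ref{lem:ac:cross} the latter is the twist $t_{y(l)}$ about the two-sided circle $y(l)$. For $x = y$: since $D(y) = -1$, we have $\kre{uy} = uy\cdot(\text{flip of sign})$, so from $u=1$ we get $y\cdot y^{-1} = 1$ (discarded, as $uy = y \in U$), and from $u = y$ we get $y^2 = t_\xi$, which is a twist by the definition of $y$. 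Thus $\fal{\cal{T}}(N)$ is generated by $t_\xi$ together with $\{t_l, t_{y(l)} \st l \in {\cal{C}}\}$, and in particular $\fal{\cal{T}}(N) \subseteq {\cal{T}}(N)$, so every Schreier generator is an honest Dehn twist.

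The main work, and the expected obstacle, will be to reduce this raw Schreier generating set to the economical lists stated in the theorem, eliminating the ``conjugated'' generators $t_{y(l)}$ and the redundant $f_i$. For each $l \in {\cal{C}}$ I would identify the circle $y(l)$ explicitly by transferring it through the model $\fal{N}$ as in Section \ref{sec:mod2}, and then express the corresponding twist $t_{y(l)}$ in terms of the twists already on the list using the lantern relation, the torus-with-a-hole relation (Proposition \ref{prop:tor:hole}), and the identities established in the proof of Theorem \ref{tw:gen:pure} (notably $\lambda = t_{b_{r+1}}y(a_{g-2})$ and the formulas for $f_i$). The genus splits into three regimes because the available relations differ: for $g \geq 5$ odd there is enough room to replace all the $f_i$ and the $t_{y(l)}$ by a single extra circle $\psi$; for $g=3$ the surface is too small, so the $f_i$ (for $i \le n+s-1$) and $\xi$ must be retained; for $g$ even the crosscap structure forces the additional circle $\lambda$. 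I would treat the largest case first and then indicate the modifications; the genuinely delicate step is verifying that $t_\psi$ (together with the listed twists) recovers all $t_{y(l)}$, which relies on a careful picture-based computation of $y(l)$ for each generator $l$.
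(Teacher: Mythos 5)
Your proposal follows the paper's proof essentially verbatim: Proposition \ref{prop:Joh} with transversal $U=\{1,y\}$ yields the raw generating set $\{t_l,\,yt_ly^{-1},\,y^2=t_\xi \st l\in\cal{C}\}$, and the remaining work is to identify the circles $y(l)$. The only divergence is in the reduction step, which you leave as a plan invoking lantern and torus-with-a-hole relations; the paper instead just computes $y(l)$ directly in the model $\fal{N}$ (most circles in $\cal{C}$ are disjoint from the support of $y$, and the rest give $y(c_r)=b_r$, $y(a_{g-2})=\psi$ or $t_{b_{r+1}}^{-1}(\lambda)$, $y^{-1}(b_{r+1})=\psi$, $y^{-1}(e_i)=\tau_i$), with only the $g=3$ case needing an extra inductive conjugation argument to trade the $t_{\tau_i}$ for the $t_{f_i}$.
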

\begin{figure}[h]
\includegraphics{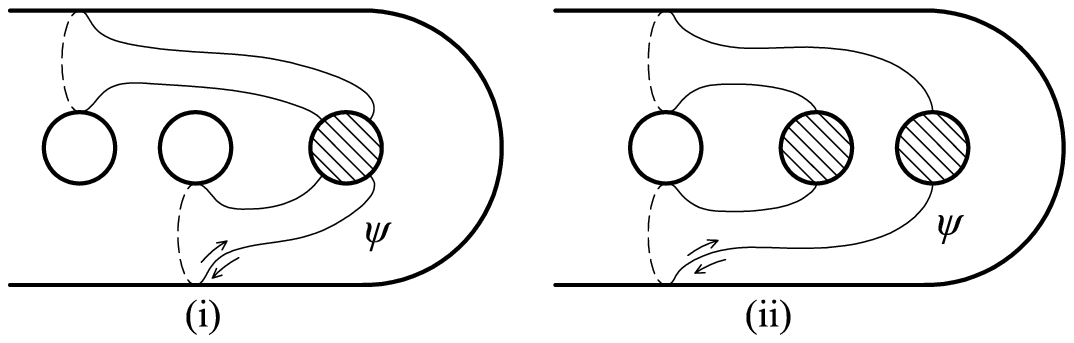}
\caption{Circle $\psi$ for $g$ odd and even.}\label{fig:100_genT_Psi}
\end{figure}
\begin{proof}
By Proposition \ref{lem:ac:cross}, $D$ is onto, hence
\[[{{\cal{PM}^+(N)}}:{\fal{\cal{T}}}(N)]=2\]
and as a transversal for ${\fal{\cal{T}}}(N)$ we can take $U=\{1,y\}$. By Theorem \ref{tw:gen:pure} and
Propositions \ref{prop:Joh}, \ref{prop:det:tw}, \ref{lem:ac:cross}, ${\fal{\cal{T}}}(N)$ is generated by
%\begin{itemize}
 \[\{t_l,yt_ly^{-1},y^2\st l\in {\cal{C}} \}.\]
%\end{itemize}
Clearly the circles
\begin{itemize}
 \item $\lst{u}{s}$ if $g=3$,
 \item $a_2,\ldots,a_{g-3},\lst{b}{r-1},\lst{c}{r-1},\lst{e}{n+s-1},\\ \lst{u}{s}$ if $g\geq 5$   odd,
 \item $a_2,\ldots,a_{g-3},\lst{b}{r},\lst{c}{r},\lst{e}{n+s-1},\\ \lst{u}{s}$ if $g$ even,
\end{itemize}
are disjoint from the support of $y$ (cf Figures \ref{fig:010_gen_odd}, \ref{fig:020_gen_even} and
\ref{fig:050_gen_KsiLam}). Since $yt_ly^{-1}=t_{y(l)}^\pm$ and $y^2=t_\xi$, ${\fal{\cal{T}}}(N)$ is generated
by
\begin{itemize}
 \item $\{t_l\st l\in{\cal{C}}\cup\{y(a_2),y(b_1),y(c_1),y(e_1),\ldots,y(e_{n+s-1}),\xi \} \}$ if $g=3$,
 \item $\{t_l\st l\in{\cal{C}}\cup\{y(a_{g-2}),y(a_{g-1}),y(b_r),y(c_r),\xi\} \}$ if $g\geq5$ odd,
 \item $\{t_l\st l\in{\cal{C}}\cup\{y(a_{g-2}),y(a_{g-1}),y(b_{r+1}),\xi\} \}$ if $g$ even.
\end{itemize}

We defined $y$ using $\fal{N}$, so it is convenient to transfer circles to that model. Once this is done,
it is not difficult to check that we have relations:
\begin{itemize}
 \item $y(a_2)=a_2$, $y({c_1})={b_1}$, $y^{-1}({e_1})=\tau_1,\ldots,y^{-1}({e_{n+s-1}})=\tau_{n+s-1}$
if $g=3$, where $\tau_i$ is as in Figure \ref{fig:110_genT_tau},
 \item $y(a_{g-2})=\psi$, $y(a_{g-1})=a_{g-1}$, $y({c_r})={b_r}$ if $g\geq 5$ odd,
 \item $y(a_{g-2})=t_{b_{r+1}}^{-1}(\lambda)$, $y(a_{g-1})=a_{g-1}$, $y^{-1}(b_{r+1})=\psi$ if $g$ even.
\end{itemize}
\begin{figure}[h]
\includegraphics{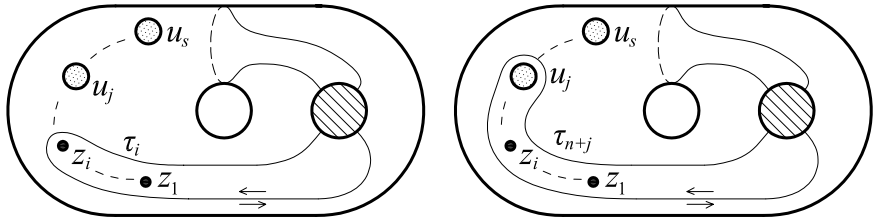}
\caption{Circles $\tau_1,\ldots,\tau_{n+s-1}$ for $g=3$.}\label{fig:110_genT_tau}
\end{figure}
To complete the proof it is enough to show that the twists
$t_{\tau_1},\ldots,t_{\tau_{n+s-1}}$ can be replaced by twists
$t_{f_1},\ldots,t_{f_{n+s-1}}$ in case $g=3$. In order to prove this, let
$\ro_i=t_{c_1}^{-1}t_{a_2}^{-1}(\tau_i)$. Then it is straightforward to check that
\[\tau_i=t_{\ro_{i-1}}^{-1}t_{a_2}^{2}t_{c_1}(f_i)\quad\text{for $i=2,\ldots,n+s-1$}. \]
Therefore, using the relation $\tau_1=f_1$, we can inductively replace each $t_{\tau_i}$
by $t_{f_i}$.
\end{proof}
\begin{wn}\label{wn:T:eq:falT}
Let $g\geq 3$. Then the kernel of the determinant homomorphism $\map{D}{{\cal{PM}^+(N)}}{\zz_2}$ is the
twist subgroup of $\cal{M}(N)$, that is $\fal{\cal{T}}(N)={\cal{T}}(N)$.
\end{wn}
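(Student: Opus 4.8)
The plan is to prove the two inclusions $\fal{\cal{T}}(N)\subseteq{\cal{T}}(N)$ and ${\cal{T}}(N)\subseteq\fal{\cal{T}}(N)$ separately. The second inclusion is the easy one: by Proposition \ref{prop:det:tw} every twist about a two--sided circle lies in the kernel of $D$, so the whole twist subgroup is contained in $\fal{\cal{T}}(N)$. (One should remark that twists about two--sided circles automatically lie in ${\cal{PM}}^+(N)$, since they preserve each puncture and the local orientation around it, so that $D$ is indeed defined on all of ${\cal{T}}(N)$.) Thus the real content is the reverse inclusion.

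For $\fal{\cal{T}}(N)\subseteq{\cal{T}}(N)$ I would simply appeal to the generating set just computed in Theorem \ref{tw:gen:T}. That theorem exhibits $\fal{\cal{T}}(N)$ as generated by a finite collection of elements, and I would observe that \emph{every generator in that list is a Dehn twist} $t_l$ about a two--sided circle $l$ (the circles $a_i,b_i,c_i,e_i,u_i,f_i,\lambda,\psi$ are all two--sided, and $\xi$ is two--sided as the boundary of the Klein bottle supporting $y$). Since each generator lies in ${\cal{T}}(N)$ by definition of the twist subgroup, the group they generate is contained in ${\cal{T}}(N)$, giving $\fal{\cal{T}}(N)\subseteq{\cal{T}}(N)$.

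Combining the two inclusions yields $\fal{\cal{T}}(N)={\cal{T}}(N)$, which is exactly the claim of Corollary \ref{wn:T:eq:falT}. The only point that requires any care — and the one I expect to be the main (though minor) obstacle — is the bookkeeping needed to confirm that the generators of Theorem \ref{tw:gen:T} are genuinely twists about two--sided circles rather than, say, crosscap slides; in particular one must note that although $y$ itself is \emph{not} a twist, it has been eliminated from the generating set of $\fal{\cal{T}}(N)$ precisely because $D(y)=-1$ (Proposition \ref{lem:ac:cross}) places $y$ outside the kernel, while $y^2=t_\xi$ contributes only the twist $t_\xi$. Thus the construction of the generating set in Theorem \ref{tw:gen:T} was already arranged so that this corollary follows immediately, and no further computation is needed.
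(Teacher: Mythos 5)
Your proposal is correct and follows exactly the paper's own argument: the inclusion ${\cal{T}}(N)\leq\fal{\cal{T}}(N)$ from Proposition \ref{prop:det:tw} (after noting ${\cal{T}}(N)\leq{\cal{PM}}^+(N)$), and the reverse inclusion because every generator listed in Theorem \ref{tw:gen:T} is a twist about a two--sided circle. No meaningful difference from the paper's proof.
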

\begin{proof}
Clearly ${\cal{T}}(N)\leq\cal{PM}^+(N)$ and by Proposition \ref{prop:det:tw}, \[{{\cal{T}(N)}\leq \ker
D=\fal{\cal{T}}(N)}.\] On the other hand, by Theorem \ref{tw:gen:T}, $\fal{\cal{T}}(N)\leq {\cal{T}}(N)$.
\end{proof}
\begin{wn}\label{wn:T:eq:falT2}
The twist subgroup ${\cal{T}}(N)$ has index $2^{n+1}n!$ in $\cal{M}(N_{g,s}^n)$.
\end{wn}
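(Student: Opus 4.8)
The plan is to read off the index from the tower of subgroups
\[
{\cal{T}}(N)\leq {\cal{PM}}^+(N_{g,s}^n)\leq {\cal{PM}}(N_{g,s}^n)\leq {\cal{M}}(N_{g,s}^n)
\]
using the multiplicativity of indices, namely
\[
[{\cal{M}}(N):{\cal{T}}(N)]=[{\cal{M}}(N):{\cal{PM}}(N)]\cdot[{\cal{PM}}(N):{\cal{PM}}^+(N)]\cdot[{\cal{PM}}^+(N):{\cal{T}}(N)].
\]
Two of the three factors are already recorded in the introduction: we have $[{\cal{M}}(N_{g,s}^n):{\cal{PM}}(N_{g,s}^n)]=n!$ and $[{\cal{PM}}(N_{g,s}^n):{\cal{PM}}^+(N_{g,s}^n)]=2^n$.

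It remains to compute the last factor. First I would use Corollary \ref{wn:T:eq:falT} to identify ${\cal{T}}(N)$ with $\fal{\cal{T}}(N)=\ker D$, where $\map{D}{{\cal{PM}^+(N)}}{\zz_2}$ is the determinant homomorphism of Section \ref{sec:act}. By Proposition \ref{lem:ac:cross} we have $D(y)=-1$, so $D$ is onto $\zz_2$; consequently $[{\cal{PM}}^+(N):{\cal{T}}(N)]=[{\cal{PM}}^+(N):\ker D]=|\zz_2|=2$. Substituting the three indices into the displayed product yields
\[
[{\cal{M}}(N_{g,s}^n):{\cal{T}}(N)]=n!\cdot 2^n\cdot 2=2^{n+1}n!,
\]
as claimed.

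I do not expect any genuine obstacle here, since each ingredient has already been established earlier in the paper; this corollary is essentially a bookkeeping consequence of Corollary \ref{wn:T:eq:falT} together with the two standard indices. The only point deserving a remark is that the tower is legitimate, i.e.\ that ${\cal{T}}(N)$ really sits inside ${\cal{PM}}^+(N)$: this is exactly the inclusion ${\cal{T}}(N)\leq {\cal{PM}}^+(N)$ noted at the start of the proof of Corollary \ref{wn:T:eq:falT}, reflecting that twists about two--sided circles are pure and preserve local orientation around the punctures. Given that, multiplicativity of the index does the rest.
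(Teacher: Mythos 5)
Your argument is correct and follows exactly the paper's own route: identify ${\cal{T}}(N)$ with $\ker D$ via Corollary \ref{wn:T:eq:falT} and multiply the indices along the tower ${\cal{T}}(N)\leq{\cal{PM}}^+(N)\leq{\cal{PM}}(N)\leq{\cal{M}}(N)$, obtaining $n!\cdot 2^n\cdot 2$. Nothing further is needed.
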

\begin{proof}
By the previous corollary, ${\cal{T}}(N)=\ker D$, so the conclusion follows from the obvious equality
\begin{multline*} [\cal{M}(N):\ker D]=[\cal{M}(N):\cal{PM}(N)]\cdot[\cal{PM}(N):\cal{PM}^+(N)]\cdot \\
\cdot[\cal{PM}^+(N):\ker D]=n!\cdot 2^n\cdot 2.
\end{multline*}
\end{proof}
In the case of a closed surface $N_{g}$, the above corollary was first proved by
Lickorish \cite{Lick3}. Later Korkmaz \cite{Kork-non} proved it for a punctured surface
$N_{g}^n$ under additional assumption $g\geq 7$.

\section{Homological results for the twist subgroup}
For the rest of the paper, for any $f\in {\cal{T}}(N)$ let $[f]$ denotes the homology
class of $f$ in $\mathrm{H_1}({\cal{T}}(N),\zz)$. Moreover, we will use the additive
notation in $\mathrm{H_1}({\cal{T}}(N),\zz)$.
\subsection{Homology classes of non--peripheral twists}
\begin{lem}\label{lem:cong:ess}
Let $a$ and $b$ be two two--sided circles on $N$ such that  $I(a,b)=1$ and the orientations of regular neighbourhoods of $a$ and $b$ are induced from the orientation of $a\cup b$.
Then $t_a$ and $t_b$ are conjugate in ${\cal{T}}(N)$. In particular $[t_{a}]=[t_{b}]$ in
$\mathrm{H_1}({\cal{T}}(N))$.
\end{lem}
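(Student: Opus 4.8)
The plan is to exploit the classical fact that two Dehn twists about curves meeting once are related by a braid-type relation, and then to boost this into a genuine conjugacy inside the twist subgroup $\cal{T}(N)$. First I would recall that when $a,b$ are two-sided circles with $I(a,b)=1$, a regular neighbourhood $S$ of $a\cup b$ is a torus with one boundary component, and with compatibly oriented neighbourhoods one has the \emph{braid relation} $t_at_bt_a=t_bt_at_b$. From this relation one extracts $t_b=(t_at_b)t_a(t_at_b)^{-1}$, which already exhibits $t_a$ and $t_b$ as conjugate. The point is that the conjugating element $t_at_b$ is itself a product of twists about two-sided circles, hence lies in $\cal{T}(N)$; so the conjugacy takes place inside $\cal{T}(N)$ and not merely inside $\cal{M}(N)$.

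Next I would make precise the orientation bookkeeping, since on a nonorientable surface there is no global orientation and the sign of a twist is a genuine choice. The hypothesis that the orientations of the neighbourhoods of $a$ and $b$ are both induced from the orientation of $a\cup b$ is exactly what is needed so that $t_a,t_b$ are the \emph{right} twists with respect to a common local orientation of $S$; this is what guarantees the braid relation holds with the stated signs rather than with some inverse inserted. I would invoke Proposition \ref{pre:prop:twist:cong} to keep track of how the orientation of the neighbourhood of $b$ compares with the orientation of $(t_at_b)(a)$, confirming that the conjugate $(t_at_b)t_a(t_at_b)^{-1}$ equals $t_b$ on the nose (with the chosen orientation) and not its inverse.

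The final step, passing to homology, is then formal: conjugate elements have equal images in the abelianization $\mathrm{H_1}(\cal{T}(N),\zz)$, so $[t_a]=[t_b]$ immediately. Here I would simply note that $[ghg^{-1}]=[h]$ in any $\mathrm{H_1}$, applied to $g=t_at_b$ and $h=t_a$.

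I expect the main obstacle to be the orientation/sign analysis rather than the braid relation itself. Establishing $t_at_bt_a=t_bt_at_b$ is standard on an orientable neighbourhood $S$, but verifying that the induced orientation on the neighbourhood of $(t_at_b)(a)$ matches the prescribed orientation on the neighbourhood of $b$ — so that no spurious inverse appears — requires care, and this is precisely where Proposition \ref{pre:prop:twist:cong} does the essential work. One subtlety worth flagging is that the braid relation is a local statement about the oriented subsurface $S$, and since $S\cong N_{1,1}$ embeds orientably even though $N$ is not orientable, the local orientation used to define the right twists is unambiguous once we fix the orientation of $a\cup b$; this is what makes the hypothesis on orientations both necessary and sufficient.
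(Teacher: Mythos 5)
Your proposal is correct and is essentially the paper's own proof: the author likewise derives the conjugacy from the braid relation $t_b=(t_at_b)t_a(t_at_b)^{-1}$, with the orientation hypothesis ensuring the relation holds with the stated signs. Your additional remarks on the orientation bookkeeping via Proposition \ref{pre:prop:twist:cong} only make explicit what the paper leaves implicit.
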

\begin{proof}
The Lemma follows form the braid relation
\[t_b=(t_at_b)t_a(t_at_b)^{-1}. \]
\end{proof}
\begin{lem}\label{lem:two:gener:cong}
Assume that $g\geq 3$ and let $a$ and $b$ be two nonseparating two--sided circles on $N$ such that $N\bez a$ and $N\bez b$ are nonorientable. Then $t_a$ is conjugate in ${\cal{T}}(N)$ either to $t_b$ or to $t_b^{-1}$.
\end{lem}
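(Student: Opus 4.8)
The plan is to reduce the general statement to the special case already handled by Lemma~\ref{lem:cong:ess}, exploiting the homogeneity of nonseparating two--sided circles with nonorientable complement. First I would invoke the topological fact used in the proof of Lemma~\ref{lem:ac:tw:nonsep}: since $a$ and $b$ are both two--sided nonseparating circles with $N\bez a$ and $N\bez b$ nonorientable, there is a diffeomorphism $\map{f}{N}{N}$ with $f(a)=b$. By Proposition~\ref{pre:prop:twist:cong} this gives $f t_a f^{-1}=t_b^{\pm 1}$, where the sign depends on whether $f$ respects the chosen orientations of the regular neighbourhoods. The difficulty is that this $f$ is merely a diffeomorphism of $N$; it need not fix $\partial N$, need not preserve $\Sigma$ pointwise, and in particular need not lie in ${\cal{T}}(N)$ (or even in $\cal{M}(N)$), so the conjugation takes place in the full mapping class group rather than inside the twist subgroup. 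The whole point of the lemma is to upgrade this to conjugacy \emph{within} ${\cal{T}}(N)$.

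The mechanism for that upgrade is Lemma~\ref{lem:cong:ess}: whenever two two--sided circles $c$ and $d$ satisfy $I(c,d)=1$ (with compatibly induced orientations), the twists $t_c$ and $t_d$ are already conjugate inside ${\cal{T}}(N)$ via the braid relation, so $[t_c]=[t_d]$. The strategy is therefore to connect $a$ to $b$ by a finite chain of circles
\[ a=d_0,d_1,\ldots,d_k=b \]
in which each consecutive pair $d_{i-1},d_i$ intersects once, geometrically, so that Lemma~\ref{lem:cong:ess} applies to each link and the conjugacies compose. Building such a chain is exactly where the hypotheses $g\geq 3$ and ``complement nonorientable'' are needed: in genus at least $3$ the surface contains enough room (a one--sided circle to absorb the crosscap data, or an extra handle) that any nonseparating two--sided circle with nonorientable complement can be joined to a fixed reference circle — say $a_1$ from Figure~\ref{fig:010_gen_odd} — through a sequence of single--intersection moves staying within this topological type. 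Concretely I would use a ``change of coordinates'' argument: a chain $d_0,d_1$ with $I(d_0,d_1)=1$ has a torus--with--hole or Klein--bottle neighbourhood, and sliding along such neighbourhoods lets one march a circle across the surface.

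The main obstacle, and the reason the conclusion is ``$t_b$ \emph{or} $t_b^{-1}$'' rather than always $t_b$, is orientation bookkeeping. Along the chain each application of Lemma~\ref{lem:cong:ess} requires the two regular--neighbourhood orientations to be the ones induced from $a_\bullet\cup b_\bullet$, and these local choices may or may not be globally compatible with the fixed orientations on $a$ and $b$; a nonorientable surface offers no coherent way to propagate a neighbourhood orientation around a loop. The net effect of traversing the chain is that $t_a$ is conjugate in ${\cal{T}}(N)$ to $t_b^{\pm 1}$, with the sign determined by the accumulated orientation discrepancy, which cannot in general be controlled. So the careful part of the write--up is verifying that each link genuinely lies in ${\cal{T}}(N)$ (the conjugating elements from the braid relation are products of twists, hence automatically in the twist subgroup) and tracking that the only ambiguity introduced is a global sign. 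Once the chain is in place, composing the link--by--link conjugacies yields an element of ${\cal{T}}(N)$ conjugating $t_a$ to $t_b$ or to $t_b^{-1}$, which is precisely the assertion.
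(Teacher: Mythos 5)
Your argument is essentially the paper's: reduce to Lemma~\ref{lem:cong:ess} by joining $a$ to $b$ with a chain of two--sided circles in which consecutive circles meet once, the sign ambiguity arising from the impossibility of coherently propagating neighbourhood orientations. The only difference is that where you sketch the existence of such a chain by a change--of--coordinates argument, the paper simply cites Korkmaz's dual--equivalence theorem (Theorem 3.1 of the reference) and notes that its proof extends to surfaces with boundary.
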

\begin{proof}
By Lemma \ref{lem:cong:ess}, it is enough to prove that there exits a sequence of two--sided circles
$\lst{p}{k}$ such that $p_1=a$, $p_k=b$ and $I(p_i,p_{i+1})=1$ for $i=1,\ldots,k-1$. In other words, using
the terminology of \cite{Kork-non}, we have to prove that $a$ and $b$ are dually equivalent. For a
nonorientable surface with punctures this was proved in Theorem 3.1 of \cite{Kork-non}. It is
straightforward to check that the same proof applies to the case of a surface with boundary.
\end{proof}
\begin{lem}\label{lem:b:psi:cong}
Let $g=2r+2\geq 4$. Then $t_{b_{r+1}}$ is conjugate in ${\cal{T}}(N)$ to $t_\psi^{-1}$, where $\psi=y^{-1}(b_{r+1})$ is as in Theorem \ref{tw:gen:T}.
\end{lem}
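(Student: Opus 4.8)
The plan is to exploit the defining relation $\psi=y^{-1}(b_{r+1})$ together with the fact that the crosscap slide $y$ reverses the local orientation it transports. First I would apply Proposition \ref{pre:prop:twist:cong} to the diffeomorphism $y^{-1}$: since $y^{-1}(b_{r+1})=\psi$, conjugation gives $y^{-1}t_{b_{r+1}}y=t_\psi^{\pm1}$, where the sign records whether the orientation of a regular neighbourhood of $\psi$ induced by $y^{-1}$ agrees with the reference orientation of $\psi$ fixed in Figure \ref{fig:100_genT_Psi}. To compute this sign I would transfer the picture to the model $\fal{N}$ of Section \ref{sec:mod2}, in which $y$ is defined explicitly as a slide of the crosscap $C_g$ along $\mu$, and track the orientation marks (the small $\pm$ signs of Figures \ref{070_cut_odd} and \ref{080_cut_even}) through the slide. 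The expectation, and the geometric heart of the lemma, is that the slide flips the transported orientation, so that $y^{-1}t_{b_{r+1}}y=t_\psi^{-1}$.

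This identity takes place in ${\cal{PM}}^+(N)$ but is witnessed by $y$, which by Proposition \ref{lem:ac:cross} satisfies $D(y)=-1$ and hence does \emph{not} lie in ${\cal{T}}(N)=\ker D$ (Corollary \ref{wn:T:eq:falT}). Moreover, since $b_{r+1}$, and therefore $\psi$, has orientable complement, Lemma \ref{lem:two:gener:cong} does not apply, so the conjugacy must be realised by other means. The plan here is to replace $y$ by an even element: choose an \emph{odd} mapping class $z$ (a single crosscap slide, so $D(z)=-1$) that fixes the oriented circle $b_{r+1}$, i.e. $zt_{b_{r+1}}z^{-1}=t_{b_{r+1}}$. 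Then $h=y^{-1}z$ satisfies $D(h)=(-1)(-1)=1$, so $h\in{\cal{T}}(N)$, and $ht_{b_{r+1}}h^{-1}=y^{-1}t_{b_{r+1}}y=t_\psi^{-1}$, which is exactly the asserted conjugacy inside ${\cal{T}}(N)$.

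I expect the main obstacle to be the construction of such a stabilising odd element $z$. Because $N\bez b_{r+1}$ is orientable, there is no Klein bottle disjoint from $b_{r+1}$, so $z$ cannot be supported away from $b_{r+1}$; instead one must produce a crosscap slide whose supporting Klein bottle meets $b_{r+1}$ yet which fixes $b_{r+1}$ up to isotopy together with its orientation. I would build $z$ from the orientable-complement structure of $b_{r+1}$, realising an orientation reversal of the complementary orientable subsurface that fixes the oriented curve $b_{r+1}$, and then verify $D(z)=-1$ using the action on $\mathrm{H_1}(N,\rr)$ as in Section \ref{sec:act}. An alternative route, avoiding $z$ altogether, is to connect $b_{r+1}$ to $\psi$ by a chain of two-sided circles meeting consecutively in one point and apply Lemma \ref{lem:cong:ess} along the chain; this stays inside ${\cal{T}}(N)$ automatically, but then the delicate point becomes bookkeeping the induced orientations around the chain and checking that the nonorientable monodromy contributes precisely the single sign reversal that turns $t_\psi$ into $t_\psi^{-1}$. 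Either way, the crux is pinning the sign to $-1$ while keeping the conjugating element in the kernel of $D$.
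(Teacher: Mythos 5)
Your first step is sound and matches the spirit of the paper's argument: by Proposition \ref{pre:prop:twist:cong}, $y^{-1}t_{b_{r+1}}y=t_{\psi}^{\pm1}$, and the sign can in principle be pinned to $-1$ by tracking orientations in the model $\fal{N}$ --- the paper performs essentially this ``careful examination of the orientations'' at the end of its proof. The genuine gap is in your second step. You reduce the lemma to producing an odd element $z$ (with $D(z)=-1$) centralizing $t_{b_{r+1}}$, but you never construct it, and this reduction is not a simplification: given the sign computation, the existence of such a $z$ is \emph{equivalent} to the lemma (if $h\in\ker D$ conjugates $t_{b_{r+1}}$ to $t_{\psi}^{-1}$, then $z=yh$ is odd and centralizes $t_{b_{r+1}}$; conversely $h=y^{-1}z$ does the job). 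Your sketch of $z$ --- an orientation reversal of the orientable complement of $b_{r+1}$ fixing the oriented curve --- is not a construction: it is not clear that such a map extends over $N$ as an element of ${\cal{PM}}^+(N)$, that it preserves (rather than reverses) the orientation of a neighbourhood of $b_{r+1}$, which would instead give $zt_{b_{r+1}}z^{-1}=t_{b_{r+1}}^{-1}$, or that it satisfies $D(z)=-1$, since the determinant of an orientation-reversing map of an orientable piece on homology depends on its genus. The alternative chain route has the same status: a chain exists, but the single sign reversal you need is exactly the point you leave unverified, so in both variants the crux of the lemma is deferred rather than proved.

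For contrast, the paper sidesteps the centralizer question entirely by exhibiting an explicit element of $\ker D$ carrying $b_{r+1}$ to $\psi$: the product $y_{g-2}t_{a_{g-2}}y_{g-3}t_{a_{g-3}}\cdots y_2t_{a_2}y_1$, where $y_i$ is the slide of the crosscap $C_{i+1}$ along a loop $\mu_i$. This word contains $g-2=2r$ crosscap slides, an even number, so it lies in $\ker D={\cal{T}}(N)$ by Corollary \ref{wn:T:eq:falT}, and one checks directly that it maps $b_{r+1}$ to $y^{-1}(b_{r+1})=\psi$; only then does the orientation check determine the sign. To repair your argument you would need to supply a comparably explicit $z$, or carry out the orientation bookkeeping along a concrete chain of circles.
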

\begin{proof}
Figure \ref{fig:115_bIPsi}(i) shows the circle $b_{r+1}$ as a circle on $\fal{N}$.
\begin{figure}[h]
\includegraphics{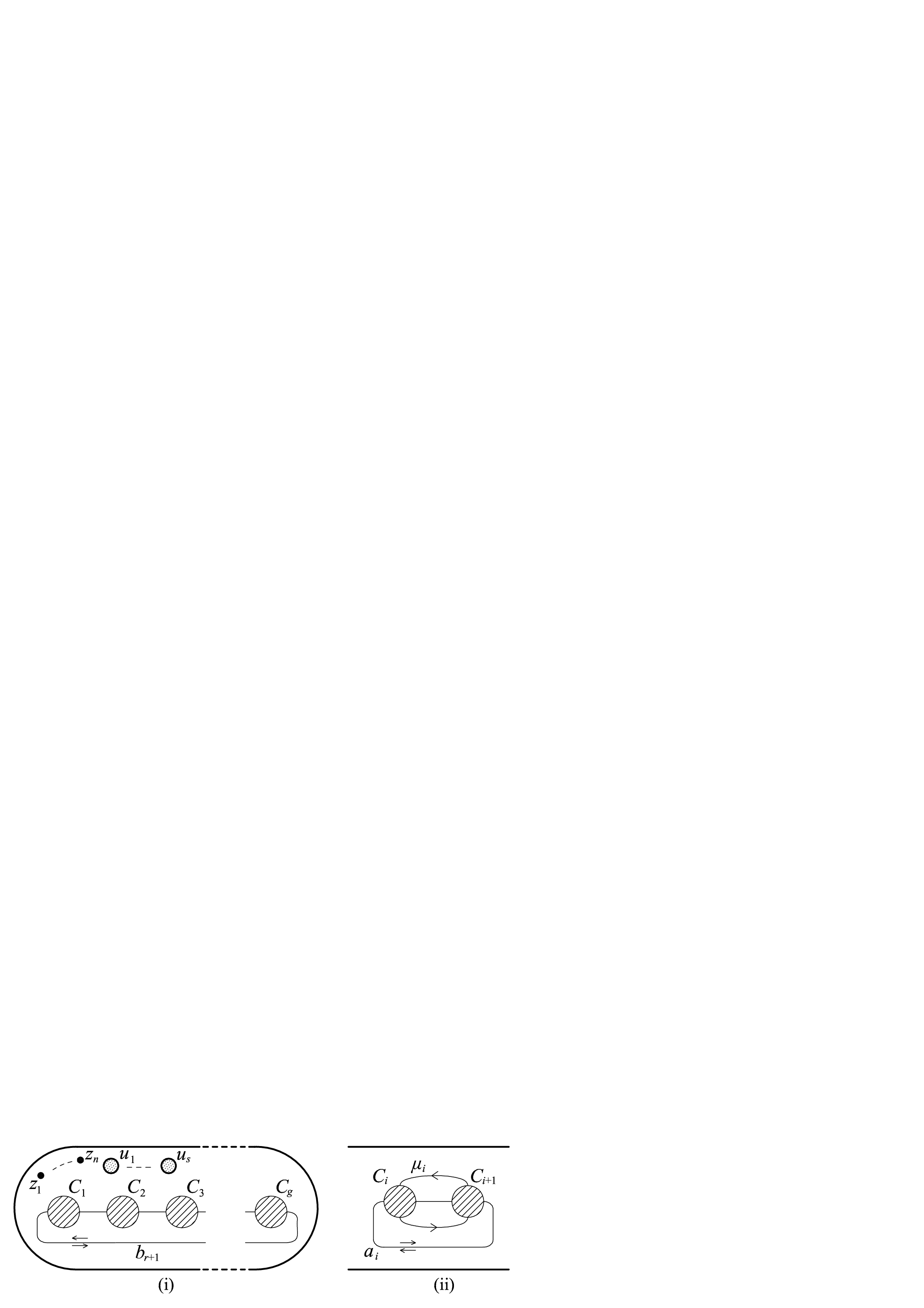}
\caption{Circle $b_{r+1}$ and crosscap slide $y_i$, Lemma \ref{lem:b:psi:cong}.}\label{fig:115_bIPsi}
\end{figure}
Using the notation from this figure, let $y_i$ for $i=1,\ldots, g-1$, be a slide of the crosscap $C_{i+1}$ along the loop $\mu_i$ as shown in Figure \ref{fig:115_bIPsi}(ii). In particular $y_{g-1}=y$.
%By transferring $b_{r+1}$ to $\fal{N}$ as shown in Figure \ref{fig:115_bIPsi}(i),
It is straightforward to check that
\[y_{g-2}t_{a_{g-2}}y_{g-3}t_{a_{g-3}}\cdots y_2t_{a_2}y_1(b_{r+1})=y_{g-1}^{-1}(b_{r+1})=\psi. \]
%\[y_1^{-1}(t_{a_2}^{-1}y_2^{-1})\cdots (t_{a_{g-2}}^{-1}y_{g-2}^{-1})y_{g-1}^{-1}(b_{r+1})=b_{r+1}. \]
%Hence
%\[y_1^{-1}(t_{a_2}^{-1}y_2^{-1})\cdots (t_{a_{g-2}}^{-1}y_{g-2}^{-1})(\psi)=b_{r+1}. \]
Moreover, $y_{g-2}t_{a_{g-2}}\cdots y_2t_{a_2}y_1$, as a product of twists and even number of crosscap slides, is in the kernel of the determinant homomorphism $\map{D}{{\cal{PM}^+(N)}}{\zz_2}$ defined in Section \ref{sec:act}. Therefore, by Corollary \ref{wn:T:eq:falT}, $t_{b_{r+1}}$ is conjugate in ${\cal{T}}(N)$ to either $t_{\psi}$ or $t_{\psi}^{-1}$. Careful examination of the orientations of local neighbourhoods of $b_{r+1}$ and $\psi$ shows that in fact $t_{b_{r+1}}$ is conjugate to $t_\psi^{-1}$.
\end{proof}
\begin{lem} \label{lem:tw:sq:triv}
Let $g\geq 4$. Then $[t_{a_1}^2]=0$ in
$\mathrm{H_1}({\cal{T}}(N))$.
%
%Assume that $g\geq 4$ and let $a$ be a two--sided nonseparating circle on $N$ such that $N\bez a$ is nonorientable. Then $[t_a^2]=0$ in
%$\mathrm{H_1}({\cal{T}}(N))$.
\end{lem}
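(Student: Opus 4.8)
The plan is to show that $[t_{a_1}^2]=0$ by producing a lantern relation in which $a_1$ (or a circle conjugate to $a_1$) plays a distinguished role, so that the squared twist can be written as a product of twists whose homology classes cancel. The key structural fact, available since $g\geq 4$, is that $a_1$ is a nonseparating two--sided circle with $N\bez a_1$ nonorientable, so by Lemma \ref{lem:two:gener:cong} all such twists are conjugate (up to inverse) in ${\cal{T}}(N)$ and hence share a single homology class in $\mathrm{H_1}({\cal{T}}(N))$; call it $\alpha$. The goal then reduces to finding a relation among twists about such ``standard'' nonseparating circles that forces $2\alpha=0$.

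First I would locate inside $N$ an embedded sphere with four holes $S$ whose boundary and interior circles are all two--sided and nonseparating with nonorientable complement. Having $g\geq 4$ is exactly what gives enough room: after splitting off one or two crosscaps to carry the nonorientability, the remaining orientable genus together with the extra handle lets me embed a four--holed sphere so that each of the seven lantern circles $a_0,a_1,a_2,a_3,a_{1,2},a_{2,3},a_{1,3}$ is nonseparating with nonorientable complement. By the lantern relation we then have $t_{a_0}t_{a_1}t_{a_2}t_{a_3}=t_{a_{1,2}}t_{a_{2,3}}t_{a_{1,3}}$, and passing to $\mathrm{H_1}({\cal{T}}(N))$ (where the abelianization collapses the left side to a sum of four copies of $\alpha$ and the right side to three copies of $\alpha$, all signs equalized by Lemma \ref{lem:two:gener:cong}) yields $4\alpha=3\alpha$, i.e. $\alpha=0$. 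If instead the boundary circles and interior circles fall into classes of opposite sign, the same computation gives a relation of the form $2\alpha=0$ directly; either way the squared twist $t_{a_1}^2$, whose class is $2\alpha$, maps to $0$.

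The main obstacle will be the bookkeeping of the \emph{signs and orientations}: the lantern relation as stated in the excerpt requires all seven neighbourhood orientations to be induced from a fixed orientation of $S$, but the ambient surface is nonorientable, so there is no global orientation to compare against, and the conjugacies furnished by Lemma \ref{lem:two:gener:cong} only identify classes up to inverse. I would handle this by choosing the embedding of $S$ carefully and then examining the local neighbourhood orientations of each lantern circle against the conjugating diffeomorphisms, exactly as is done in Lemma \ref{lem:b:psi:cong}, to determine precisely which circles contribute $+\alpha$ and which contribute $-\alpha$. The delicate point is to verify that the resulting integer combination is genuinely $2\alpha$ (or reduces to it), rather than an identity that is vacuously satisfied; this amounts to checking that not all seven circles carry the same sign, which the geometry of the embedding—specifically the crosscap that makes the complement nonorientable—should guarantee. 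Once the sign pattern is pinned down the conclusion $[t_{a_1}^2]=2\alpha=0$ is immediate.
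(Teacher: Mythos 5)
There is a genuine arithmetic gap that defeats this approach. If all seven lantern circles are nonseparating with nonorientable complement, then by Lemma \ref{lem:two:gener:cong} each of the seven twists is conjugate in ${\cal{T}}(N)$ to $t_{a_1}$ or to $t_{a_1}^{-1}$, so each has homology class $\eps\alpha$ with $\eps\in\{\pm1\}$ and $\alpha=[t_{a_1}]$. The lantern relation then abelianizes to $(\eps_0+\eps_1+\eps_2+\eps_3-\eps_{1,2}-\eps_{2,3}-\eps_{1,3})\alpha=0$. A signed sum of seven terms each equal to $\pm1$ is always odd, so no choice of embedding or of local orientations can ever produce the relation $2\alpha=0$; your fallback (``if the boundary and interior circles fall into classes of opposite sign, the same computation gives $2\alpha=0$ directly'') is arithmetically impossible, and the only conclusions available are $k\alpha=0$ with $k$ odd. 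Worse, for $4\leq g\leq 6$ (precisely the cases where the lemma is not already subsumed by Lemma \ref{lem:a_1:g7}) the class $[t_{a_1}]$ has order exactly $2$ by Proposition \ref{prop:g3_and_g4} and Theorem \ref{tw:main:hom}, and $k\alpha=0$ with $k$ odd would force $\alpha=0$. So the four-holed sphere you want to embed does not exist on those surfaces: in every lantern the paper actually exploits, at least one of the seven circles is separating, bounds a \Mob, or is a boundary circle, exactly so that the homological count closes up.

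The paper's own proof does not use the lantern at all. Figure \ref{fig:120_hom_sqtw} exhibits two chains of two-sided nonseparating circles, each consecutive pair meeting once, so that the braid relation of Lemma \ref{lem:cong:ess} makes $t_a$ conjugate in ${\cal{T}}(N)$ to $t_c$ along the first chain and to $t_c^{-1}$ along the second. Hence $t_c$ is conjugate to $t_c^{-1}$, giving $2[t_c]=0$, and Lemma \ref{lem:two:gener:cong} transfers this to $t_{a_1}$. If you want to repair your argument, aim directly at exhibiting an element of ${\cal{T}}(N)$ conjugating $t_{a_1}$ to $t_{a_1}^{-1}$ (the extra crosscap available when $g\geq 4$ is what makes this possible) rather than at a lantern identity.
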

\begin{proof}
Figure \ref{fig:120_hom_sqtw}(i) shows three two--sided nonseparating circles $a,b$ and $c$ such that $I(a,b)=1$, $I(b,c)=1$ and the complement of each of these circles in $N$ is nonorientable. Hence by Lemma \ref{lem:cong:ess}, all three twists $t_a,t_b$ and $t_c$ are conjugate in ${\cal{T}}(N)$. 
\begin{figure}[h]
\includegraphics{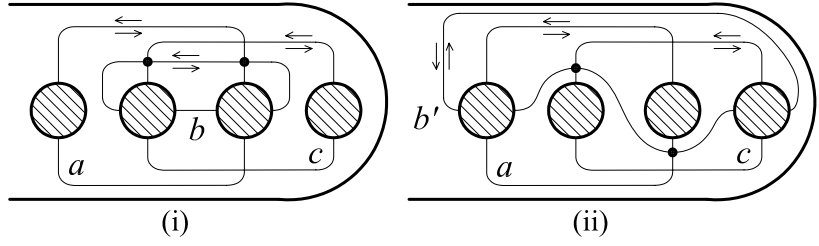}
\caption{Circles $a,b,b'$ and $c$, Lemma \ref{lem:tw:sq:triv}.}\label{fig:120_hom_sqtw}
\end{figure}
Similarly, Figure \ref{fig:120_hom_sqtw}(ii) shows that $t_a,t_{b'}$ and $t_{c}^{-1}$ are
conjugate. Hence $t_c$ and $t_c^{-1}$ are conjugate in ${\cal{T}}(N)$. By Lemma \ref{lem:two:gener:cong}, the same is true for $t_{a_1}$.
\end{proof}
\begin{uw}\label{uw:iness}
Observe that if $a$ and $b$ are two--sided nonseparating circles on $N$ such that $N\bez a$ and $N\bez b$ are nonorientable, then Lemma \ref{lem:two:gener:cong} gives us no hint if $t_a$ is conjugate to $t_b$ or maybe to $t_b^{-1}$. However, by Lemma \ref{lem:tw:sq:triv}, as long as $g\geq 4$ and we are concern with homology classes, the above ambiguity is inessential.
\end{uw}

Now let recall some results form \cite{Stukow_SurBg}.

\begin{lem}[Lemma 6.7 of \cite{Stukow_SurBg}] \label{lem:a_1:g7}
Let $g\geq 7$. Then $[t_{a_1}]=0$.\qed
%Let $a$ be a two--sided nonseparating circle on $N$ such that $N\bez a$ is nonorientable and let $g\geq 7$. Then $[t_a]=0$.\qed
\end{lem}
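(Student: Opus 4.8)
The goal is to show that $[t_{a_1}]=0$ in $\mathrm{H_1}(\cal{T}(N))$ when $g\geq 7$. My plan is to exploit the \emph{lantern relation} to produce a linear equation among homology classes of nonseparating twists, since by Remark \ref{uw:iness} and Lemma \ref{lem:two:gener:cong} all such twists (for $g\geq 4$) represent the same class $[t_{a_1}]$ in $\mathrm{H_1}(\cal{T}(N))$, up to the sign ambiguity which is inessential because $[t_{a_1}^2]=0$ by Lemma \ref{lem:tw:sq:triv}.

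First I would embed a four-holed sphere $S$ in $N$ (this requires enough genus, which is why the hypothesis $g\geq 7$ appears) so that all seven circles of the lantern --- the four boundary circles $a_0,a_1,a_2,a_3$ and the three interior circles $a_{1,2},a_{2,3},a_{1,3}$ --- are two-sided, nonseparating, and have nonorientable complement in $N$. This is the key geometric step and the main obstacle: I must position the lantern carefully inside the connected-sum model of Figures \ref{fig:010_gen_odd} and \ref{fig:020_gen_even} so that cutting along any one of these seven circles still leaves a nonorientable surface, which is exactly the condition under which Lemma \ref{lem:two:gener:cong} applies to each twist. Having enough crosscaps on ``both sides'' of each circle is what forces $g$ to be large; verifying that all seven circles simultaneously satisfy the hypotheses is the delicate part.

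Once such a lantern is in place, I pass the relation $t_{a_0}t_{a_1}t_{a_2}t_{a_3}=t_{a_{1,2}}t_{a_{2,3}}t_{a_{1,3}}$ to the abelianization $\mathrm{H_1}(\cal{T}(N))$. Using the additive notation, this reads
\[
[t_{a_0}]+[t_{a_1}]+[t_{a_2}]+[t_{a_3}]=[t_{a_{1,2}}]+[t_{a_{2,3}}]+[t_{a_{1,3}}].
\]
By Lemma \ref{lem:two:gener:cong} combined with Remark \ref{uw:iness}, each of these seven classes equals $[t_{a_1}]$ (the sign ambiguity from Lemma \ref{lem:two:gener:cong} being killed by Lemma \ref{lem:tw:sq:triv}). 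Substituting, the left side becomes $4[t_{a_1}]$ and the right side becomes $3[t_{a_1}]$, so the relation collapses to
\[
4[t_{a_1}]=3[t_{a_1}],
\]
whence $[t_{a_1}]=0$, as desired.

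The only genuine work is therefore the construction in the first step; the algebra afterward is immediate. I would expect to draw an explicit figure exhibiting the embedded four-holed sphere and to remark (as the paper does elsewhere) that checking the complements are nonorientable is straightforward once the picture is fixed. A secondary point to confirm is that the orientation conventions in the lantern relation are compatible with the conventions under which Lemma \ref{lem:two:gener:cong} is stated, but since we are only tracking homology classes and have already absorbed the sign ambiguity via Remark \ref{uw:iness}, this causes no difficulty.
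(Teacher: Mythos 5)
Your proposal is correct and is essentially the argument behind the cited Lemma 6.7 of \cite{Stukow_SurBg} (the paper itself only quotes that lemma and notes that its proof carries over to $\cal{T}(N)$ via Lemmas \ref{lem:cong:ess}, \ref{lem:tw:sq:triv} and Remark \ref{uw:iness}): one embeds a four--holed sphere whose complement is a connected nonorientable surface with four boundary components, which by an Euler characteristic count has genus $g-6$ and hence exists precisely when $g\geq 7$, so all seven lantern circles are nonseparating with nonorientable complement and the relation abelianizes to $4[t_{a_1}]=3[t_{a_1}]$. Your identification of where the hypothesis $g\geq 7$ enters and of the sign issue handled by Lemma \ref{lem:tw:sq:triv} matches the intended proof.
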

\begin{lem}[Lemma 6.6 of \cite{Stukow_SurBg}]\label{lem:brp1:g6}
Assume $g=2r+2\geq 6$. Then $[t_{b_{r+1}}]=0$.\qed
\end{lem}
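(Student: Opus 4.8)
The plan is to derive the vanishing of $[t_{b_{r+1}}]$ from a single lantern relation, chosen so that six of its seven twists collapse to one common homology class and cancel. Recall that $b_{r+1}$ is a nonseparating two--sided circle whose complement in $N$ is orientable, so it is not directly covered by Lemma~\ref{lem:two:gener:cong}; this is exactly why a relation tying it to the abundant ``nonorientable--complement'' twists is needed, and why a result stronger than Lemma~\ref{lem:b:psi:cong} (which only controls $t_{b_{r+1}}$ up to a sign) is required.

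First I would embed a sphere with four holes $S\subseteq N\bez\Sigma$ with boundary circles $a_0,a_1,a_2,a_3$, arranged so that $a_0=b_{r+1}$ while the remaining six lantern circles $a_1,a_2,a_3,a_{1,2},a_{2,3},a_{1,3}$ are all nonseparating and have nonorientable complement in $N$. This is where the hypothesis $g=2r+2\ge 6$ (that is, $r\ge 2$) enters: one needs enough orientable genus on the side of $a_0$ away from the two crosscaps to fit the four--holed sphere while keeping each of the other six circles nonseparating and with a nonorientable complement. I expect this topological construction --- exhibiting the embedding explicitly in one of the two models of Section~\ref{sec:mod2} and verifying the complement types of all seven circles --- to be the main obstacle; the case $g=4$ genuinely fails here, which accounts for the threshold being $g\ge 6$ rather than $g\ge 4$.

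Granting such an $S$, the six circles $a_1,a_2,a_3,a_{1,2},a_{2,3},a_{1,3}$ are mutually conjugate in ${\cal{T}}(N)$ up to inversion by Lemma~\ref{lem:two:gener:cong}, and by Remark~\ref{uw:iness} (which invokes $g\ge 4$ and Lemma~\ref{lem:tw:sq:triv}) the sign ambiguity is irrelevant at the level of homology classes. Hence there is a single class $\alpha:=[t_{a_1}]$ with $[t_{a_i}]=[t_{a_{j,k}}]=\alpha$ for all six of them.

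Finally I would push the lantern relation $t_{a_0}t_{a_1}t_{a_2}t_{a_3}=t_{a_{1,2}}t_{a_{2,3}}t_{a_{1,3}}$ into $\mathrm{H_1}({\cal{T}}(N))$. Using $a_0=b_{r+1}$ and the common value $\alpha$, the left side contributes $[t_{b_{r+1}}]+3\alpha$ and the right side contributes $3\alpha$, so the relation reads $[t_{b_{r+1}}]+3\alpha=3\alpha$, giving $[t_{b_{r+1}}]=0$. The only delicate point beyond the embedding is orientation bookkeeping: I must fix the orientation of the regular neighbourhood of $a_0$ induced from $S$ so that the corresponding twist is $t_{b_{r+1}}$ and not its inverse. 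Even here no harm can come, since a sign slip would only replace the relation by $-[t_{b_{r+1}}]+3\alpha=3\alpha$, which still forces $[t_{b_{r+1}}]=0$.
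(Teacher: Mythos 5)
The paper does not actually prove this lemma: it is imported verbatim as Lemma 6.6 of \cite{Stukow_SurBg}, with only the surrounding remark that the cited proof (stated there for ${\cal{PM}}^+$) transfers to ${\cal{T}}(N)$ via Lemmas \ref{lem:cong:ess}, \ref{lem:tw:sq:triv} and Remark \ref{uw:iness}. Your lantern argument is precisely the standard proof of that cited lemma (going back to Korkmaz's treatment of twists about circles with orientable complement), and it is sound: the homological constraints show the required configuration exists exactly when $r\geq 2$, i.e.\ $g\geq 6$, and the appeal to Lemma \ref{lem:two:gener:cong} together with $2[t_{a_1}]=0$ correctly disposes of all sign ambiguities. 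The only part you defer rather than execute is the explicit embedding of the four--holed sphere with the stated complement types, which you rightly flag as the real content; that construction does go through (e.g.\ by splitting the orientable complement $\Sigma_{r,2}$ of $b_{r+1}$ into two four--holed spheres glued along three boundary circles), so there is no gap in the plan.
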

%The above lemma was originally proved only for the circle $b_{r+1}$. However, by Lemma \ref{lem:b:psi:cong}, the statement applies as well to the circle $\psi$.
\begin{lem}[Lemma 6.12 of \cite{Stukow_SurBg}] \label{lem:bdTwTriv_g5}
Assume $g\geq 5$. Then the boundary twists $t_{u_1},\ldots,t_{u_s}$ are trivial in
$\mathrm{H_1}({\cal{T}}(N))$.\qed
\end{lem}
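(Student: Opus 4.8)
The plan is to realize each boundary twist $t_{u_i}$ through a single lantern relation in which $u_i$ plays the role of one of the four boundary circles of the lantern, while the remaining six circles are all nonseparating two--sided circles with nonorientable complement. Concretely, I would embed a sphere with four holes $S$ in $N\bez\Sigma$ so that one boundary component $a_0$ of $S$ is isotopic to $u_i$ (hence $t_{a_0}=t_{u_i}^{\pm1}$), and so that the three remaining boundary circles $a_1,a_2,a_3$ of $S$ together with the three enclosing circles $a_{1,2},a_{2,3},a_{1,3}$ are each nonseparating and have nonorientable complement in $N$. This is precisely the step that forces $g\geq 5$: one needs enough crosscaps outside a neighbourhood of $u_i$ so that cutting along any one of these six circles still leaves a nonorientable surface, and a direct inspection of Figures \ref{fig:010_gen_odd} and \ref{fig:020_gen_even} shows that five crosscaps give exactly enough room to route the lantern in this way.

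Once such a lantern is in place, the lantern relation yields
\[t_{a_0}t_{a_1}t_{a_2}t_{a_3}=t_{a_{1,2}}t_{a_{2,3}}t_{a_{1,3}}\]
in ${\cal{T}}(N)$, and I would abelianize this identity in $\mathrm{H_1}({\cal{T}}(N))$. By Lemma \ref{lem:two:gener:cong} each of the six twists $t_{a_1},t_{a_2},t_{a_3},t_{a_{1,2}},t_{a_{2,3}},t_{a_{1,3}}$ is conjugate in ${\cal{T}}(N)$ to a fixed nonseparating twist up to inversion; since $g\geq 5\geq 4$, Lemma \ref{lem:tw:sq:triv} together with Remark \ref{uw:iness} shows that this inversion is invisible at the level of homology, so all six classes coincide with one common value $\gamma$. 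Writing the abelianized relation additively then gives $[t_{a_0}]+3\gamma=3\gamma$, whence $[t_{a_0}]=0$. As $a_0$ is isotopic to $u_i$, we conclude $[t_{u_i}]=\pm[t_{a_0}]=0$, which is the assertion.

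The main obstacle is the topological bookkeeping of the construction, namely verifying both that a lantern with the prescribed boundary behaviour exists and that each of its six non--peripheral circles is genuinely nonseparating with nonorientable complement. Neither point is hard once the surface is drawn explicitly, but this is exactly where the hypothesis $g\geq 5$ is consumed: for smaller genus there is not enough nonorientable ``slack'' to keep all six complements nonorientable at once, and the conjugacy input of Lemma \ref{lem:two:gener:cong} would fail for some of the circles. By contrast, the homological half of the argument is purely formal, relying only on the facts, already established above, that the six inner twists lie in a single conjugacy class up to inversion and that inversion acts trivially in $\mathrm{H_1}({\cal{T}}(N))$.
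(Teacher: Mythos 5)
Your proof is correct and is essentially the intended one: the paper does not reprove this lemma but cites Lemma 6.12 of \cite{Stukow_SurBg}, whose argument is precisely this lantern configuration with $u_i$ as one of the four boundary circles and the remaining six circles nonseparating with nonorientable complement (the same device that appears in Lemma \ref{lem:triv_u_j:sq} here, where for small genus one can no longer make all six inner circles equivalent and only $2[t_{u_j}]=0$ survives). You also correctly supply the adaptation from ${\cal{PM}}^+(N)$ to ${\cal{T}}(N)$ that the paper only flags in the remark following these lemmas, namely that Lemma \ref{lem:two:gener:cong} together with Lemma \ref{lem:tw:sq:triv} and Remark \ref{uw:iness} forces the six inner twist classes to coincide in $\mathrm{H_1}({\cal{T}}(N))$, so the abelianized relation collapses to $[t_{u_i}]=0$.
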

\begin{lem}[Lemma 6.14 of \cite{Stukow_SurBg}]\label{lem:kapp}
Assume that $g\geq 3$ and let $\kappa$ be a two--sided separating circle on $N$ such that one of the components of $N\bez \kappa$ is a disk $\Delta$ containing the punctures $\lst{z}{n}$ and the holes $\lst{u}{s}$. Moreover, assume that the orientation of $\Delta$ agrees with the orientations of neighbourhoods of $\lst{u}{s}$ and $\kappa$. Then
\[\left[t_\kappa\right]=\left[t_{u_1}\cdots t_{u_s} \right]=\left[t_{u_1}\right]+\cdots +\left[t_{u_s}\right] .\]\qed
\end{lem}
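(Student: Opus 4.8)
The plan is to use the lantern relation to express the separating twist $t_\kappa$ in terms of the boundary twists, since this is the natural tool for relating a twist about a separating circle to twists about the pieces it bounds off. The circle $\kappa$ cuts off a disk $\Delta$ containing all $n$ punctures and all $s$ holes $u_1,\dots,u_s$. Inside $\Delta$ (or rather, on a sphere-with-holes subsurface obtained by thickening), the boundary components are $\kappa$ together with $u_1,\dots,u_s$ and boundaries encircling the punctures. The essential point is that $\kappa$ is a two-sided \emph{separating} circle bounding a planar (hence orientable) subsurface, so all the twists involved are honest Dehn twists with coherent orientations inherited from the orientation of $\Delta$; this is what the orientation hypotheses in the statement guarantee.

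First I would reduce to the orientable planar situation. Since $\Delta$ is a disk, the subsurface cut off is orientable, and on an orientable surface the multi-lantern relation (iterated lantern) gives a clean identity: on a sphere with $k+1$ boundary components $d_0, d_1, \dots, d_k$, the twist about the outer boundary $d_0$ equals the product of the twists about the inner boundaries $d_1 \cdots d_k$ up to twists about interior separating curves — and when there is no nontrivial interior topology, one gets $t_{d_0} = t_{d_1}\cdots t_{d_k}$ modulo commutators and interior twists. More carefully, I would set up a sphere with holes whose outer boundary is $\kappa$ and whose inner boundaries are $u_1,\dots,u_s$ together with small circles $\delta_1,\dots,\delta_n$ each encircling a single puncture $z_i$. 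Repeated application of Proposition (the lantern relation) expresses $t_\kappa$ as the product $t_{u_1}\cdots t_{u_s} t_{\delta_1}\cdots t_{\delta_n}$ times twists about curves interior to the planar piece.

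Next I would kill the puncture contributions and the interior twists at the level of homology. A circle $\delta_i$ encircling a single puncture bounds a disk with exactly one puncture, hence is non-generic, so $t_{\delta_i}$ is trivial as a mapping class (recall from the Preliminaries that a twist about a non-generic circle is trivial). This removes the $\delta_i$ entirely. Any remaining interior twists arising from the iterated lantern are about separating circles that bound subdisks with fewer punctures/holes; these too are either non-generic or can be handled inductively — and in homology, a product relation in ${\cal{T}}(N)$ descends directly to the additive relation in $\mathrm{H_1}({\cal{T}}(N))$. Passing to homology, commutators vanish and the ordering of the product becomes irrelevant, so the product relation yields exactly
\[\left[t_\kappa\right]=\left[t_{u_1}\right]+\cdots+\left[t_{u_s}\right] = \left[t_{u_1}\cdots t_{u_s}\right].\]

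The step I expect to be the main obstacle is organizing the iterated lantern relation so that the interior twists either cancel or are provably trivial, and—more delicately—verifying that all the orientation choices are consistent throughout the iteration. The hypothesis that the orientation of $\Delta$ agrees with the orientations of the neighbourhoods of $u_1,\dots,u_s$ and of $\kappa$ is precisely what is needed to ensure every lantern is applied with coherently oriented neighbourhoods (as required by the statement of the lantern relation), so that no unwanted inverses appear. I would handle the bookkeeping by induction on $s+n$: the base case is a four-holed sphere (a single lantern), and the inductive step peels off one hole or one puncture at a time, at each stage using that the newly-created separating circle bounds a disk with one fewer marked point whose twist contributes trivially in homology. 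Care is needed to confirm that the curves produced by successive lanterns are genuinely the $u_i$ and puncture-encircling circles with the claimed orientations, rather than isotopic copies carrying opposite orientation.
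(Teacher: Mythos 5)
There is a genuine gap, and it lies at the heart of your strategy: the lemma is \emph{not} a planar statement, so it cannot be proved by working inside the disk $\Delta$ alone. Two concrete problems. First, the lantern relation reads $t_{a_0}t_{a_1}t_{a_2}t_{a_3}=t_{a_{1,2}}t_{a_{2,3}}t_{a_{1,3}}$, so solving for the outer twist gives the \emph{inner} boundary twists with exponent $-1$; your claimed identity ``$t_{d_0}=t_{d_1}\cdots t_{d_k}$ modulo interior twists'' already has the wrong sign before the interior twists are even addressed. Second, and more fundamentally, the interior curves produced by iterating the lantern surround two or more punctures/holes, hence are generic, and their twists are \emph{not} trivial and cannot be ``handled inductively'' inside the planar piece: in the sphere with $s+1$ holes the abelianized subgroup generated by twists behaves like $\mathrm{H_1}$ of a pure braid group, where the outer boundary twist equals the sum of the classes of the twists $A_{ij}$ about curves enclosing \emph{pairs} of inner boundary components, not the sum of the inner boundary twists themselves. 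So the relation $[t_\kappa]=[t_{u_1}]+\cdots+[t_{u_s}]$ is false in the planar subsurface and only becomes true thanks to relations supplied by the nonorientable complement of $\Delta$ (this is where the hypothesis $g\geq 3$ enters, which your argument never uses).

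The proof must therefore use lanterns that leave $\Delta$: one arranges the four boundary circles so that, besides $\kappa$ and the $u_i$ (or intermediate curves), the remaining circles are nonseparating circles running through the crosscaps, whose twists are conjugate to $t_{a_1}^{\pm 1}$ by Lemmas \ref{lem:cong:ess} and \ref{lem:two:gener:cong} and cancel in $\mathrm{H_1}({\cal{T}}(N))$ --- exactly the mechanism of Lemmas \ref{lem:triv:ksi}, \ref{lem:triv_u_j:sq} and \ref{lem:ess_g3_12} in this paper. Note also that the paper itself does not reprove this statement: it is quoted as Lemma 6.14 of \cite{Stukow_SurBg}, with only the remark (via Lemmas \ref{lem:cong:ess}, \ref{lem:tw:sq:triv} and Remark \ref{uw:iness}) that the proof given there for ${\cal{PM}}^+(N)$ carries over to ${\cal{T}}(N)$. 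Your observation that circles enclosing a single puncture are non-generic, and that orientation coherence matters, is correct but does not rescue the planar reduction.
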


Although each of the above lemmas was originally stated in terms of the group ${\cal{PM}^+(N_{g,s}^n)}$,
by Lemmas \ref{lem:cong:ess}, \ref{lem:tw:sq:triv} and by Remark \ref{uw:iness}, their proofs work as well in the case of the
twist subgroup ${\cal{T}}(N)$.
\begin{lem}\label{lem:triv:ksi}
Let $g\geq 4$ and let $\beta$ be a separating circle on $N$ such that one component of $N\bez\beta$ is Klein bottle with one hole and the second component is nonorientable. Then $[t_\beta]=0$. In particular \[[y^2]=[t_\xi]=0.\]
\end{lem}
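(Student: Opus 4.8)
The plan is to realise $t_\beta$ inside a lantern relation in which two of the four boundary twists are trivial while the remaining twists all represent one and the same homology class, so that in the abelianisation everything cancels except $[t_\beta]$. Write $K$ for the Klein bottle with one hole bounded by $\beta$ and $N'$ for the other, nonorientable, component of $N\bez\beta$; since $g\geq4$ the surface $N'$ is nonorientable of genus $g-2\geq2$. Because $K\podz N\bez c$ for every circle $c$ lying in $N'$, the complement of any nonseparating two--sided circle in $N'$ is automatically nonorientable. Hence, by Lemma \ref{lem:two:gener:cong} together with Remark \ref{uw:iness} (which discards the sign ambiguity via Lemma \ref{lem:tw:sq:triv}, and so uses $g\geq4$), all such circles carry a single common class $\gamma\in\mathrm{H_1}({\cal{T}}(N))$.

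Next I would arrange the geometry. I would embed a four--holed sphere $S$ with boundary circles $a_0,a_1,a_2,a_3$ so that the lantern curve $a_{1,2}$ of Figure \ref{01_2_pre}(i) is exactly $\beta$: concretely, the pair of pants of $S$ cut off by $a_{1,2}$ and carrying the holes $a_1,a_2$ sits inside $K$, with each of $a_1,a_2$ bounding one of the two crosscaps of $K$, while the complementary pair of pants, with boundary $\beta,a_0,a_3$, lies in $N'$. Since each of $a_1,a_2$ bounds a \Mob, we have $t_{a_1}=t_{a_2}=1$. I would further choose the embedding so that each of $a_0,a_3,a_{1,3},a_{2,3}$ is nonseparating in $N$: for $g=4$ one joins $a_0$ to $a_3$ by an annulus exhausting $N'$, and for larger $g$ the extra crosscaps of $N'$ leave even more room. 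By the previous paragraph each of these four circles then carries the class $\gamma$.

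Feeding this into the lantern relation $t_{a_0}t_{a_1}t_{a_2}t_{a_3}=t_{a_{1,2}}t_{a_{2,3}}t_{a_{1,3}}$, using $t_{a_1}=t_{a_2}=1$ and $a_{1,2}=\beta$, and abelianising, gives
\[[t_{a_0}]+[t_{a_3}]=[t_\beta]+[t_{a_{2,3}}]+[t_{a_{1,3}}].\]
Since all four classes $[t_{a_0}],[t_{a_3}],[t_{a_{2,3}}],[t_{a_{1,3}}]$ equal $\gamma$, this collapses to $[t_\beta]=0$. The ``in particular'' statement is then immediate: the circle $\xi$ of Figure \ref{fig:050_gen_KsiLam} cuts off the Klein bottle with one hole formed by the crosscaps $C_{g-1},C_g$, whose complement contains $C_1,\dots,C_{g-2}$ and is therefore nonorientable as $g-2\geq2$; thus $\beta=\xi$ satisfies the hypotheses, yielding $[t_\xi]=0$, and since $y^2=t_\xi$ we obtain $[y^2]=[t_\xi]=0$.

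The main obstacle I anticipate is the topological construction in the second paragraph: producing an explicit embedded four--holed sphere for which $a_1,a_2$ bound the two crosscaps of $K$ and, simultaneously, all of $a_0,a_3,a_{1,3},a_{2,3}$ are nonseparating in $N$. This is a figure--drawing verification that is tightest precisely when $g=4$, where $N'$ is itself only a Klein bottle with one hole and the Euler characteristic is exactly used up; once such a picture is fixed, the homological conclusion is forced by the lantern relation and the equality of classes established above.
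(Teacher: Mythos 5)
Your argument is essentially the paper's own proof: the paper also realises $\beta$ as one of the seven circles of a lantern in which two boundary circles bound the M\"obius bands (crosscaps) of the Klein bottle $K$ (so their twists vanish) and the remaining four circles are nonseparating with nonorientable complement, hence all represent $[t_{a_1}]$ up to the sign ambiguity killed by Lemma \ref{lem:tw:sq:triv}, giving $2[t_{a_1}]=[t_\beta]+2[t_{a_1}]$. The explicit embedding you flag as the remaining obstacle is exactly what the paper supplies by a figure, so the proposal is correct and takes the same route.
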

\begin{proof}
Figure \ref{fig:130_hom_ksi} shows that there is a lantern configuration with one circle
$\beta$ and all other twists either trivial or conjugate to $t_{a_1}$. Hence
\[[t_{a_1}t_{a_1}]=[t_{\beta}t_{a_1}t_{a_1}].\]
\begin{figure}[h]
\includegraphics{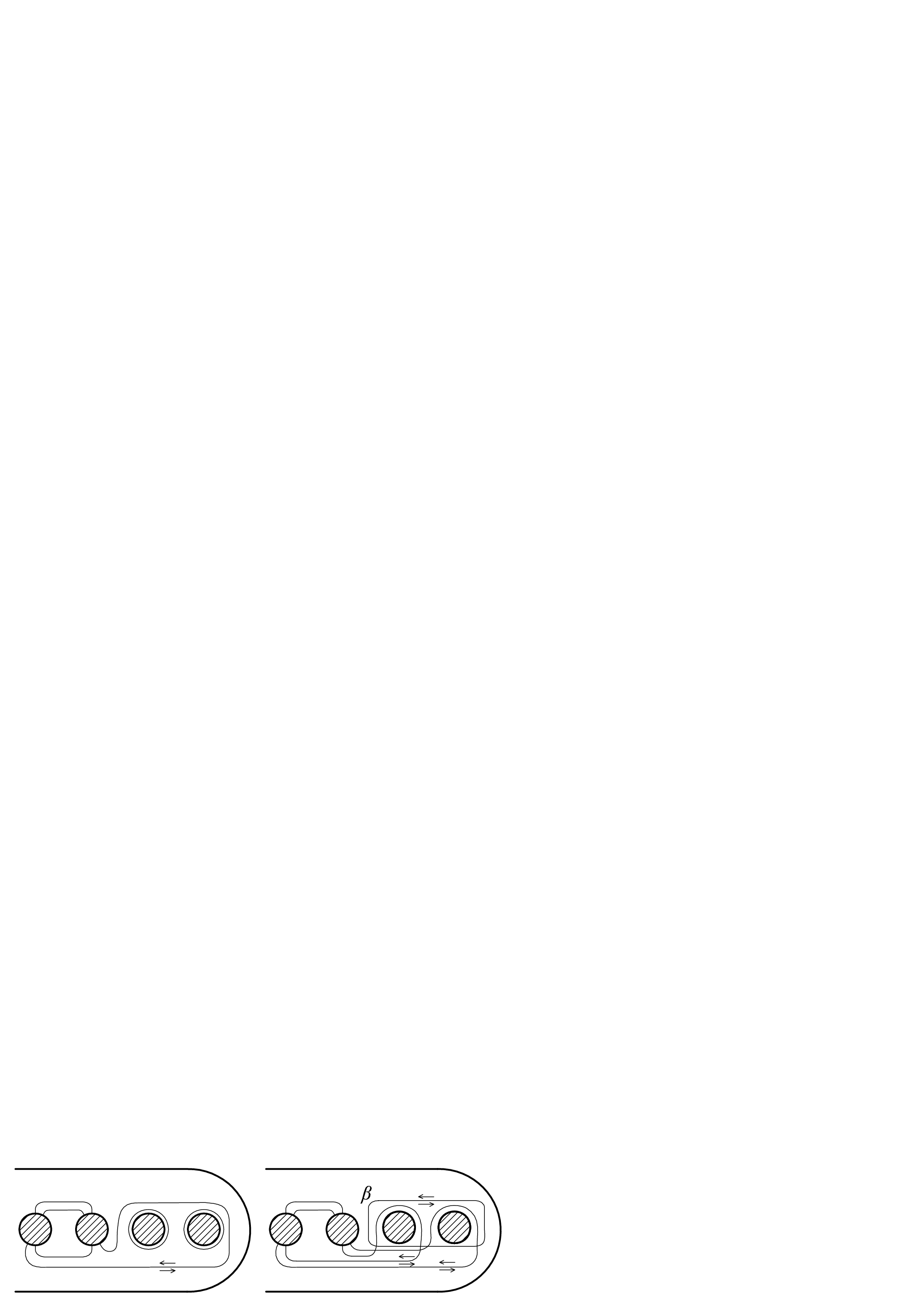}
\caption{Lantern relation, Lemma \ref{lem:triv:ksi}.}\label{fig:130_hom_ksi}
\end{figure}
\end{proof}
\begin{lem}\label{lem:triv:gamm}
Let $g\geq 4$ and let $\gamma$ be a circle on $N$ such that one of the components of $N\bez \gamma$ is a nonorientable surface of genus $3$ with one hole. Then $[t_\gamma]=0$.
\end{lem}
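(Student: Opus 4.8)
The statement to prove is Lemma \ref{lem:triv:gamm}: if $g\geq 4$ and $\gamma$ bounds on one side a nonorientable surface of genus $3$ with one hole, then $[t_\gamma]=0$ in $\mathrm{H_1}({\cal{T}}(N))$. The natural approach is to exploit the genus-$3$ nonorientable subsurface $M$ cut off by $\gamma$, and to realise $t_\gamma$ as a boundary twist of $M$ that can be re-expressed, via a relation internal to $M$, in terms of twists whose homology classes we have already shown to vanish. The strategy that parallels Lemma \ref{lem:triv:ksi} is to find a relation expressing $t_\gamma$ (up to already-trivial factors) as a product of twists each of which is conjugate in ${\cal{T}}(N)$ to $t_{a_1}^{\pm 1}$ or to one of the circles handled in Lemmas \ref{lem:triv:ksi}--\ref{lem:kapp}.

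\textbf{Key steps.} First I would fix an explicit picture of the genus-$3$ nonorientable piece $M=N_{3,1}$ (one hole, boundary $\gamma$) sitting inside $N$, with $\gamma$ its boundary. Since $g\geq 4$, the complementary side of $\gamma$ is nonorientable, so every two--sided nonseparating circle inside $M$ whose complement in $N$ is nonorientable satisfies the hypotheses of Lemmas \ref{lem:two:gener:cong} and \ref{lem:tw:sq:triv}; hence by Remark \ref{uw:iness} all such twists represent $\pm[t_{a_1}]$ in homology, and their squares are trivial. Second, I would look for a lantern or chain-type relation supported in (a slightly enlarged neighbourhood of) $M$ in which $\gamma$ appears as one boundary circle and the remaining circles are either peripheral to $M$ or nonseparating with nonorientable complement. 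A genus-$3$ nonorientable surface with one boundary component contains a suitable sphere-with-four-holes (or a torus-with-hole configuration as in Proposition \ref{prop:tor:hole}) whose boundary circles are nonseparating two--sided curves, so that the star/lantern relation lets me write $t_\gamma$ modulo commutators as a $\zz$-linear combination of classes $[t_{a_1}]$ and classes already shown to vanish (such as $[t_\xi]=[y^2]=0$ from Lemma \ref{lem:triv:ksi} and the boundary classes from Lemma \ref{lem:bdTwTriv_g5}). Third, I would carry out the homology bookkeeping: passing to $\mathrm{H_1}({\cal{T}}(N))$ each twist becomes its class, the orientation-sign ambiguity is absorbed by Remark \ref{uw:iness}, and the resulting linear equation should collapse to $[t_\gamma]=0$ once the coefficient of $[t_{a_1}]$ cancels (for instance because the genus-$3$ piece forces an even number of $t_{a_1}$-type factors with cancelling signs, exactly as the square $t_{a_1}^2$ was killed in Lemma \ref{lem:tw:sq:triv}).

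\textbf{The main obstacle.} The delicate point is producing the correct relation inside $M=N_{3,1}$ that isolates $t_\gamma$ with the other twists all in the controlled list, and then verifying that the net coefficient of the residual class $[t_{a_1}]$ is zero rather than some nonzero multiple. Unlike the Klein-bottle-with-hole case of Lemma \ref{lem:triv:ksi}, where a single lantern with one separating circle does the job cleanly, the genus-$3$ piece has more internal curves and the bookkeeping of signs is where an error would most likely hide; I expect this is where the author invokes a specific figure (a lantern configuration adapted to genus $3$) together with Lemma \ref{lem:triv:ksi} to dispose of an intermediate Klein-bottle-with-hole separating circle. The remaining arguments are routine applications of the conjugacy and triviality lemmas already established.
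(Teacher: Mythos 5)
Your overall instinct is right---the paper does prove this with a single lantern configuration inside the genus-$3$ piece, leaning on Lemma \ref{lem:triv:ksi}---but your proposal stops short of the one concrete choice that makes the argument work, and the configuration you sketch is not the one that does. You propose a lantern whose non-$\gamma$ circles are ``nonseparating two--sided curves'' conjugate to $a_1$, and you then correctly identify the resulting problem: you would have to verify that the net coefficient of $[t_{a_1}]$ cancels. The paper's lantern avoids this issue entirely because it contains no nonseparating circles at all. Inside the genus-$3$ nonorientable piece $M$ bounded by $\gamma$, take the sphere with four holes whose outer boundary is $\gamma$ and whose three inner boundary circles $a_1,a_2,a_3$ each bound a \Mob\ (one around each crosscap of $M$). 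Twists about boundaries of \Mob s are trivial outright (Theorem 3.4 of \cite{Epstein}, already quoted in the Preliminaries), so $t_{a_1},t_{a_2},t_{a_3}$ contribute nothing. The three remaining lantern circles $a_{1,2},a_{2,3},a_{1,3}$ each enclose two crosscaps, hence each cuts off a Klein bottle with one hole whose complement is nonorientable (this is where $g\geq 4$ is used), so $[t_{a_{i,j}}]=0$ by Lemma \ref{lem:triv:ksi}. The lantern relation then reads $[t_\gamma]=0$ with no residual $[t_{a_1}]$ terms and no sign bookkeeping.

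So the gap is not a wrong theorem but a missing construction: the entire content of the proof is the observation that in $N_{3,1}$ the natural lantern has all six auxiliary circles separating, three of them with literally trivial twists and three handled by the preceding lemma. Your fallback speculation in the ``obstacle'' paragraph (that the author uses a lantern plus Lemma \ref{lem:triv:ksi} on Klein-bottle-with-hole circles) is exactly right, but as written your main plan would send you down the harder path of tracking conjugates of $t_{a_1}$ and their signs, which is unnecessary here.
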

\begin{proof}
Figure \ref{fig:21_Homo} shows that there is a lantern configuration with one circle $\gamma$ and all other circles either bounding \Mob s or satisfying the assumptions of Lemma \ref{lem:triv:ksi}.
\begin{figure}[h]
\includegraphics{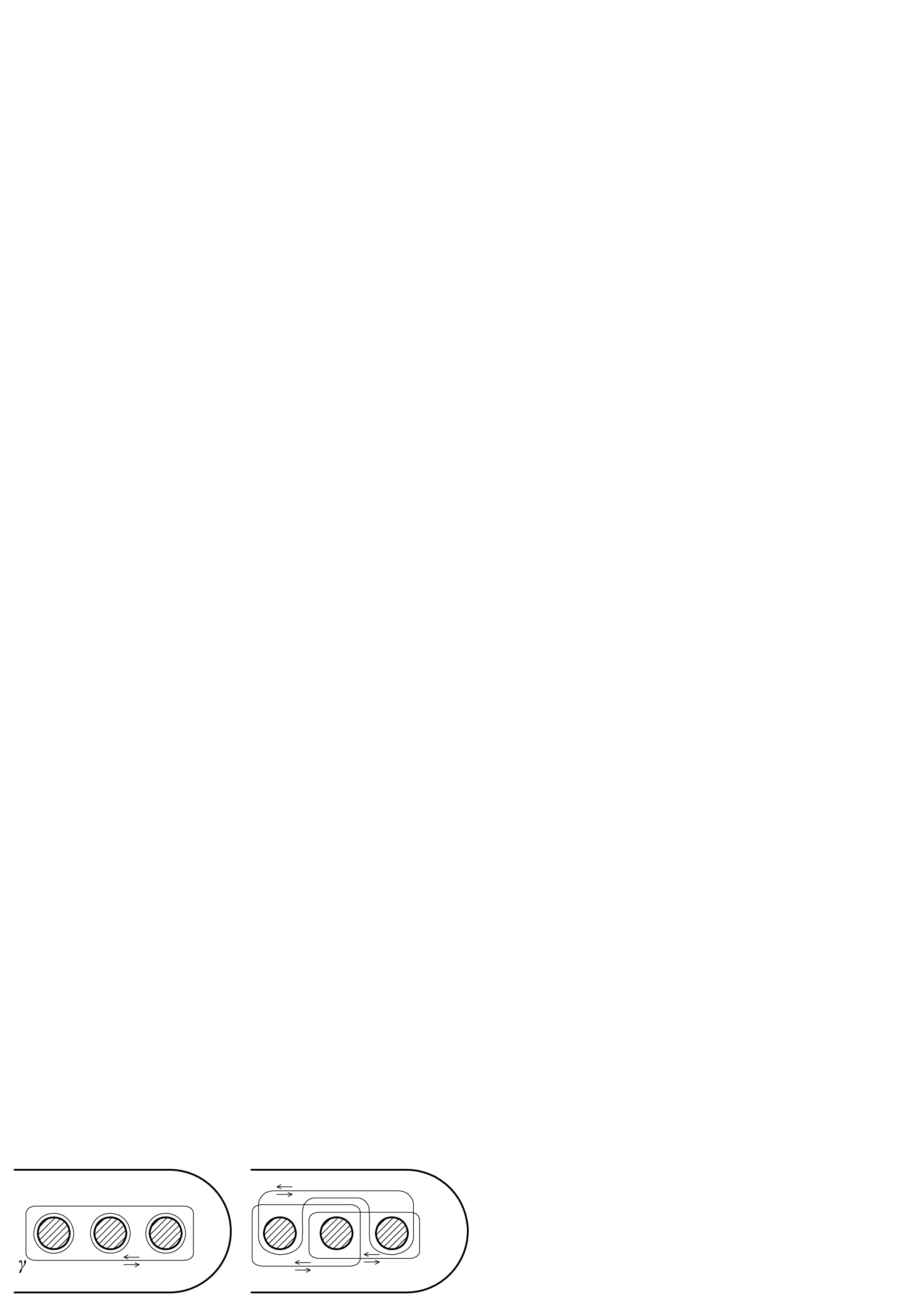}
\caption{Lantern relation, Lemma \ref{lem:triv:gamm}.}\label{fig:21_Homo}
\end{figure}
\end{proof}
%As was observed in \cite{Stukow_SurBg} (cf Lemmas 6.13, 6.14 and 6.15 of
%\cite{Stukow_SurBg}), the above lemma implies the following
\subsection{Homology classes of boundary twists}
\begin{lem}\label{lem:sum_u_i:g_4}
Let $g=4$. Then \[[t_{u_1}]+[t_{u_2}]+\cdots+[t_{u_s}]=0.\]
\end{lem}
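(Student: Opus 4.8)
The plan is to realise the circle $\kappa$ from Lemma \ref{lem:kapp} as one boundary circle of a lantern whose remaining six circles all have trivial homology class, so that the abelianised lantern relation forces $[t_\kappa]=0$.

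First I would fix a separating circle $\kappa$ bounding a disk $\Delta$ that contains all punctures $\lst{z}{n}$ and all holes $\lst{u}{s}$. By Lemma \ref{lem:kapp} we then have $[t_\kappa]=[t_{u_1}]+\cdots+[t_{u_s}]$, so it suffices to prove $[t_\kappa]=0$. The complementary piece $N\bez\Delta$ is a nonorientable surface of genus $4$ with the single hole $\kappa$. Next I would embed a sphere with four holes $S$ in $N\bez\Sigma$ whose outer boundary circle is $a_0=\kappa$ and whose three inner boundary circles $a_1,a_2,a_3$ cut $N\bez\Delta$ into three subsurfaces carrying $2,1,1$ crosscaps respectively; since $2+1+1=4$ accounts exactly for the genus of $N$, such a configuration exists. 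Thus $a_1$ bounds a Klein bottle with one hole on one side and a nonorientable surface on the other, while $a_2$ and $a_3$ each bound a \Mob. Labelling the remaining three circles $a_{1,2},a_{2,3},a_{1,3}$ as in the lantern relation, I would then identify the subsurfaces they cut off: $a_{2,3}$ together with the two \Mob s and a pair of pants of $S$ bounds a Klein bottle with one hole, whereas each of $a_{1,2}$ and $a_{1,3}$ bounds a nonorientable surface of genus $3$ with one hole.

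Reading the lantern relation in $\mathrm{H_1}({\cal{T}}(N))$, namely
\[[t_{a_0}]+[t_{a_1}]+[t_{a_2}]+[t_{a_3}]=[t_{a_{1,2}}]+[t_{a_{2,3}}]+[t_{a_{1,3}}],\]
I would kill every term except $[t_{a_0}]=[t_\kappa]$: the twists $t_{a_2},t_{a_3}$ are trivial, being twists about boundaries of \Mob s; $[t_{a_1}]=[t_{a_{2,3}}]=0$ by Lemma \ref{lem:triv:ksi}; and $[t_{a_{1,2}}]=[t_{a_{1,3}}]=0$ by Lemma \ref{lem:triv:gamm} (both applicable since $g=4\geq 4$). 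Hence $[t_\kappa]=0$, and the claim follows from Lemma \ref{lem:kapp}.

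The main obstacle is the topological bookkeeping in the embedding step: one must verify that the four--holed sphere really embeds in $N\bez\Delta$ with the stated distribution of crosscaps and that each of the six auxiliary circles cuts off precisely the surface needed to invoke Lemma \ref{lem:triv:ksi} or Lemma \ref{lem:triv:gamm}. All orientation and sign ambiguities are harmless here: by Remark \ref{uw:iness} (as $g\geq 4$) signs are irrelevant in homology, and in any case every auxiliary class is shown to be exactly $0$.
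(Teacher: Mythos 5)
Your proposal is correct and follows essentially the same route as the paper: reduce via Lemma \ref{lem:kapp} to showing $[t_\kappa]=0$, then kill $[t_\kappa]$ with a lantern whose remaining circles either bound \Mob s or satisfy the hypotheses of Lemma \ref{lem:triv:ksi} or Lemma \ref{lem:triv:gamm}. The explicit $2{+}1{+}1$ distribution of crosscaps you describe is exactly the configuration the paper draws in its figure.
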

\begin{proof}
By Lemma \ref{lem:kapp},
\[[t_{u_1}]+[t_{u_2}]+\cdots+[t_{u_s}]=t_\kappa,\]
where $\kappa$ is a circle on $N$ bounding all the punctures and boundary circles. On the other hand, Figure \ref{fig:21_Homo_2} shows that there is a lantern configuration with one circle $\gamma$ and all other circles either bounding \Mob s or satisfying the assumptions of Lemmas \ref{lem:triv:ksi} or \ref{lem:triv:gamm}.
\begin{figure}[h]
\includegraphics{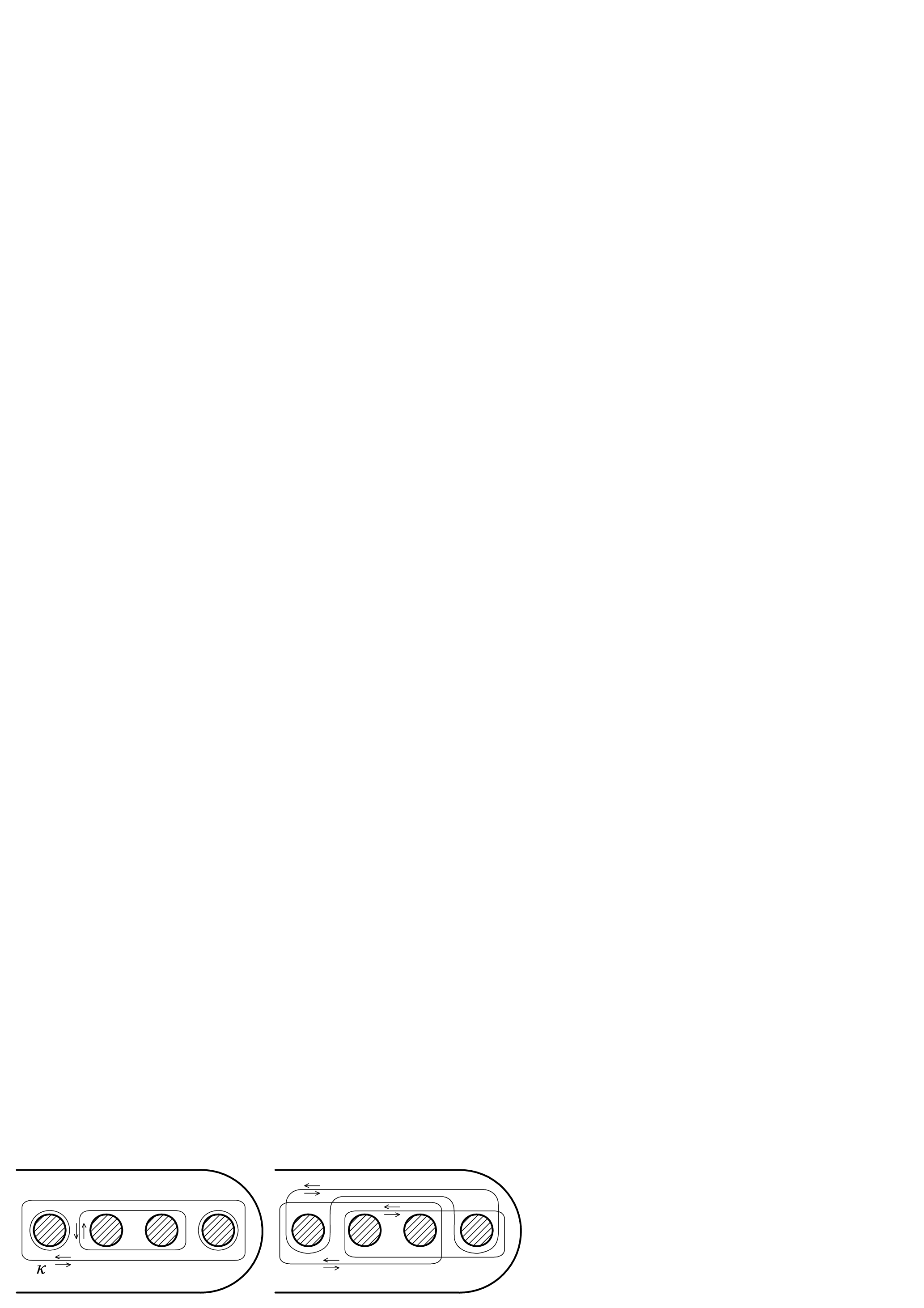}
\caption{Lantern relation, Lemma \ref{lem:sum_u_i:g_4}.}\label{fig:21_Homo_2}
\end{figure}

\end{proof}
\begin{lem}\label{lem:triv_u_j:sq}
Let $g\geq 3$. Then $2[t_{u_j}]=[t_{u_j}^2]=0$ for $j=1,\ldots,s$.
\end{lem}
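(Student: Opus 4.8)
The plan is to show that each boundary twist $t_{u_j}$ is conjugate in ${\cal{T}}(N)$ to its own inverse, which immediately gives $[t_{u_j}]=[t_{u_j}^{-1}]=-[t_{u_j}]$ in the abelianization, hence $2[t_{u_j}]=0$. The statement is asserted for all $g\geq 3$, so the argument cannot rely on the genus-$4$ lantern machinery of Lemmas \ref{lem:triv:ksi}--\ref{lem:sum_u_i:g_4}; it must be genuinely geometric and uniform in $g$.

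First I would exhibit, for a fixed boundary component $u_j$, a diffeomorphism of $N$ that reverses the orientation of a regular neighbourhood of $u_j$ while being realizable as an element of ${\cal{T}}(N)$. The natural candidate is a product of twists supported in a neighbourhood of $u_j$ together with a one-sided circle through which one can ``flip'' the annulus: since $N$ is nonorientable, near any boundary (or in the ambient surface) there is a crosscap available, and sliding or reflecting through it reverses local orientation. Concretely I would look for two-sided circles $a,b$ meeting the configuration so that the composite $h=t_{a_1}t_{a_2}\cdots$ carries the oriented annulus around $u_j$ to itself with reversed orientation; then by Proposition \ref{pre:prop:twist:cong}, $h t_{u_j} h^{-1}=t_{h(u_j)}=t_{u_j}^{-1}$, because $h(u_j)=u_j$ as an unoriented circle but the induced neighbourhood orientation is reversed. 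The key point is that $h$ must lie in $\ker D={\cal{T}}(N)$ (Corollary \ref{wn:T:eq:falT}), so I must build $h$ either purely from twists or from twists together with an even number of crosscap slides, exactly as in the construction used in Lemma \ref{lem:b:psi:cong}.

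The cleanest realization, which I expect to work for every $g\geq 3$, is to embed a Klein-bottle-with-one-hole neighbourhood whose boundary is isotopic to $u_j$ (or to confine the argument to a subsurface containing $u_j$ and one crosscap), and use the crosscap slide $y_j$ supported there: a crosscap slide along a loop encircling the hole sends the boundary to itself reversing orientation, so $y_j t_{u_j} y_j^{-1}=t_{u_j}^{-1}$ by Proposition \ref{pre:prop:Yhomeo:cong}. Since a single crosscap slide has determinant $-1$ by Proposition \ref{lem:ac:cross}, I must pair it with a second crosscap slide (or a determinant-reversing twist-free factor) so that the conjugating element has determinant $1$ and hence lies in ${\cal{T}}(N)$; composing two such slides $y_j y_j'$ that both fix $u_j$ and each reverse the same local orientation will fix the orientation, so instead I would conjugate by a single slide and then correct the parity using the fact that $t_{u_j}$ and $t_{u_j}^{-1}$ are already both two-sided twists whose conjugacy in ${\cal{T}}(N)$ can be routed through an intermediate nonseparating twist via Lemma \ref{lem:two:gener:cong}.

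The main obstacle will be the determinant bookkeeping: the orientation-reversing gadget I need is most naturally a crosscap slide, which sits in ${\cal{PM}^+}$ but \emph{not} in ${\cal{T}}(N)$, so the whole subtlety is arranging an even number of such slides (or equivalently a genuinely twist-built element) that nonetheless reverses the chosen local orientation around $u_j$. I expect the resolution to mirror Lemma \ref{lem:b:psi:cong}: write the conjugating map as a product $y_{g-2}t_{a_{g-2}}\cdots y_1$ containing an even number of crosscap slides, verify via the cut-and-paste transfer to $\fal{N}$ of Section \ref{sec:mod2} that it sends $u_j$ to itself with reversed neighbourhood orientation, and then invoke Corollary \ref{wn:T:eq:falT} to conclude that this element lies in ${\cal{T}}(N)$. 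Once the orientation reversal and the membership in ${\cal{T}}(N)$ are both confirmed, the homological conclusion $2[t_{u_j}]=0$ is immediate.
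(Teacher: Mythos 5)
Your plan hinges on finding $f\in{\cal{T}}(N)$ with $ft_{u_j}f^{-1}=t_{u_j}^{-1}$, and that step cannot work: $u_j$ is a boundary-parallel circle, and every element of ${\cal{M}}(N)$ is represented by a diffeomorphism that is the identity on $\partial N$. Such a map fixes the $j$-th boundary component pointwise, hence preserves the orientation of $u_j$ induced through the annulus between $u_j$ and that boundary component, and it cannot exchange the two sides of $u_j$ (one side is the boundary annulus, the other is the rest of the surface). So it preserves the orientation of a regular neighbourhood of $u_j$, and $ft_{u_j}f^{-1}=t_{u_j}$ for \emph{every} $f\in{\cal{M}}(N)$ -- no crosscap slide, nor any product of slides and twists, can reverse it. The specific gadgets you propose also do not exist as stated: a Klein bottle with one hole whose boundary is isotopic to $u_j$ would force $N$ itself to be that Klein bottle plus an annulus, and a slide of the \emph{hole} $u_j$ along the core of a crosscap restricts to $-\mathrm{id}$ on the boundary component, so it does not lie in ${\textrm{Diff}}(N)$ as defined in the paper.

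The paper's proof therefore inserts an extra step that your proposal is missing. A lantern relation (Figure \ref{fig:140_hom_u}), with the four nonseparating circles all conjugate to $a_1$ by Lemma \ref{lem:cong:ess}, gives $[t_{u_j}]=[t_{\eta_j}]$, where $\eta_j$ is an \emph{interior} separating circle cutting off a projective plane with two holes containing $u_j$. For this interior circle the strategy you describe does go through: a diffeomorphism $h$ that slides $\eta_j$ along the core of a crosscap on each side, acting as $-\mathrm{id}$ on $\eta_j$, satisfies $ht_{\eta_j}h^{-1}=t_{\eta_j}^{-1}$, and since $y$ also conjugates $t_{\eta_j}$ to its inverse while $D(y)=-1$, one of $h$ or $hy$ lies in $\ker D={\cal{T}}(N)$ by Corollary \ref{wn:T:eq:falT}. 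Your determinant bookkeeping for the second half is essentially right; the missing idea is the lantern relation that first trades the boundary twist for a twist about a circle that a mapping class is actually allowed to flip.
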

\begin{proof}
Consider the lantern relation indicated in Figure \ref{fig:140_hom_u}.
\begin{figure}[h]
\includegraphics{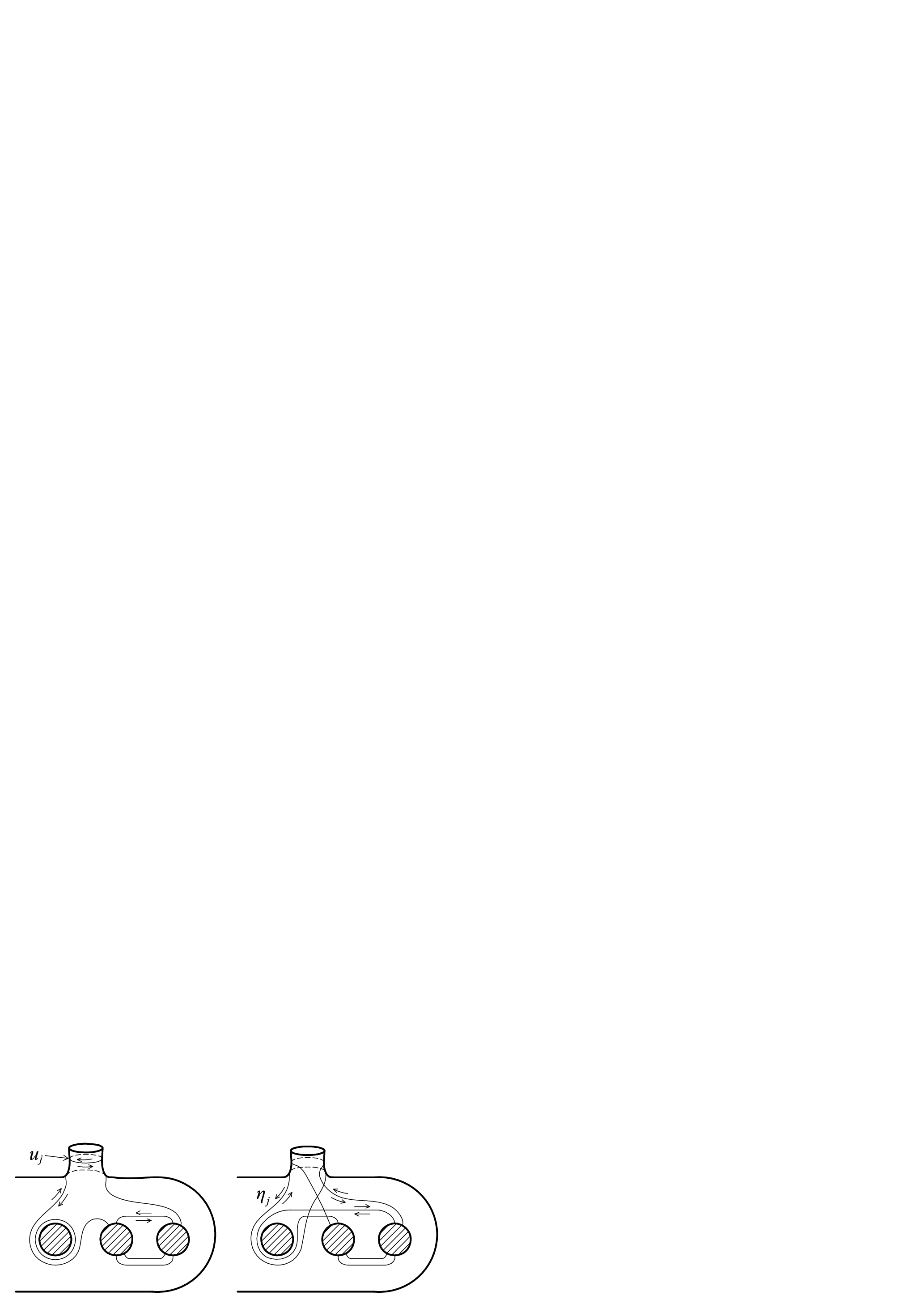}
\caption{Lantern relation $[t_{u_j}t_{a_1}t_{a_1}^{-1}]=[t_{\eta_j}t_{a_1}^{-1}t_{a_1}]$.} \label{fig:140_hom_u}
\end{figure}
Using Lemma \ref{lem:cong:ess}, it is easy to prove that all four twists about nonseparating circles in this figure are conjugate to $t_{a_1}$, hence we have
\[ [t_{u_j}t_{a_1}t_{a_1}^{-1}]=[t_{\eta_j}t_{a_1}^{-1}t_{a_1}]. \]
Therefore it is enough to show that $ft_{\eta_j}f^{-1}=t_{\eta_j}^{-1}$ for some  $f\in{\cal{T}}(N)$.

The circle $\eta_j$ divides the surface $N$ into a projective plane $N'$ with two holes and a nonorientable surface $N''$. Let $\map{h}{N}{N}$ be a diffeomorphism obtained as follows. On each of $N'$ and $N''$, $h$ is a slide of $\eta_j$ along the core of a crosscap such that $h$ is $-id$ on $\eta_j$. Clearly $h\in {\cal{PM}^+(N_{g,s}^n)}$ and $ht_{\eta_j}h^{-1}=t_{\eta_j}^{-1}$. By Proposition \ref{lem:ac:cross}, $D(y)=-1$ where $\map{D}{{\cal{PM}^+(N)}}{\zz_2}$ is the determinant homomorphism. Moreover $yt_{\eta_j}y^{-1}=t_{\eta_j}^{-1}$, hence by Corollary \ref{wn:T:eq:falT}, either $f=h$ or $f=hy$ is the required diffeomorphism.
\end{proof}
\subsection{Homology classes of twists for $g=3$}
\begin{lem}\label{lem:ess_g3_12}
Assume that $g=3$. %and let $a$ be a two--sided nonseparating circle on $N$ such that $N\bez a$ is nonorientable.
Then
\[12[t_{a_1}]=[t_\xi]=[t_{u_1}]+[t_{u_2}]+\ldots+[t_{u_s}].\]
In particular $12[t_{a_1}]=[t_\xi]=0$ if $s=0$.
\end{lem}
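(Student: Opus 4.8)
The plan is to pull the coefficient $12$ out of the torus--with--a--hole relation and then to transport the resulting separating twist, by lantern relations, first to $t_\xi$ and then to the product of boundary twists.

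First I would fix an embedded torus with one hole $S\subset N_{3,s}^n$ spanned by the chain curves $a_1,a_2$ of Figure~\ref{fig:010_gen_odd}, with boundary $c$. As $g=3$ is odd, neither $N\bez a_1$ nor $N\bez a_2$ is orientable, and since $I(a_1,a_2)=1$ with orientations induced from $S$, Lemma~\ref{lem:cong:ess} gives $[t_{a_1}]=[t_{a_2}]$ in $\mathrm{H_1}({\cal{T}}(N))$. Feeding Proposition~\ref{prop:tor:hole} into homology then yields
\[[t_c]=6[t_{a_1}]+6[t_{a_2}]=12[t_{a_1}].\]
This fixes the coefficient $12$ cleanly, with no sign ambiguity, because $[t_{a_1}]=[t_{a_2}]$ is a genuine conjugacy in ${\cal{T}}(N)$.

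The heart of the argument is then to prove $[t_c]=[t_\xi]$. Here $c$ bounds an \emph{orientable} torus with a hole whereas $\xi$ bounds a Klein bottle with a hole, so no diffeomorphism of $N$ carries $c$ to $\xi$; the equality cannot come from Proposition~\ref{pre:prop:twist:cong} and must be produced by a relation. I would search for a four--holed sphere in $N_{3,s}^n$, two of whose defining curves are $c$ and $\xi$ on opposite sides of the lantern relation, and whose remaining curves either bound \Mob s -- so that their twists vanish by the Epstein result recalled in Section~2 -- or split into pairs conjugate in ${\cal{T}}(N)$ to $t_{a_1}$ and to $t_{a_1}^{-1}$, which therefore cancel. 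Reading that lantern relation in $\mathrm{H_1}({\cal{T}}(N))$ should leave exactly $[t_c]=[t_\xi]$. For the remaining equality I would invoke Lemma~\ref{lem:kapp}, giving $[t_{u_1}]+\dots+[t_{u_s}]=[t_\kappa]$ for the curve $\kappa$ cutting off a disk containing all punctures and holes, and then connect $\kappa$ to $\xi$ by a second lantern of the same type; combining the three steps yields $12[t_{a_1}]=[t_\xi]=[t_{u_1}]+\dots+[t_{u_s}]$, and when $s=0$ the right--hand sum is empty, so $12[t_{a_1}]=[t_\xi]=0$.

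The main obstacle is the bookkeeping forced by $g$ being exactly $3$. For $g\ge 4$ one has $2[t_{a_1}]=0$ by Lemma~\ref{lem:tw:sq:triv}, and the auxiliary separating twists are killed outright by Lemmas~\ref{lem:triv:ksi}--\ref{lem:sum_u_i:g_4}; in that regime the $\pm$ ambiguity of Lemma~\ref{lem:two:gener:cong} and Remark~\ref{uw:iness} is harmless and most terms simply disappear. None of this survives at $g=3$: the class $[t_{a_1}^2]$ need not be zero, the auxiliary separating curves need not be homologically trivial, and the distinction between $t_{a_1}$ and $t_{a_1}^{-1}$ genuinely matters. The delicate point is therefore to choose the two lanterns, together with the orientations of all the regular neighbourhoods involved, so that every unwanted contribution is provably the twist about a circle bounding a \Mob, or is exactly cancelled by its inverse, leaving precisely $12[t_{a_1}]$; pinning down these orientations in the tight space of a genus--$3$ surface is where the real work lies.
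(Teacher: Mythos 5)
Your outline matches the paper's proof essentially step for step: the factor $12$ comes from the torus--with--a--hole relation applied to the torus spanned by $a_1,a_2$ (your $c$ is the paper's $\alpha$), and the two lanterns you postulate --- with the extra boundary curves conjugate to $t_{a_1}^{\pm 1}$ so that they cancel in pairs --- are exactly the configurations the paper exhibits in Figures \ref{fig:170_hom_AlphKapp} and \ref{fig:180_hom_KsiKapp}, the only difference being that the paper passes through $\kappa$ before $\xi$ rather than the reverse. The final equality with $[t_{u_1}]+\cdots+[t_{u_s}]$ comes from Lemma \ref{lem:kapp} in both arguments, so your plan is correct and follows the paper's route.
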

\begin{proof}
Applying Lemma \ref{prop:tor:hole} to the configuration shown in Figure \ref{fig:160_hom_12t_a}, we
obtain
\[(t_{a_1}t_{a_2})^6=t_{\alpha}.\]
\begin{figure}[h]
\includegraphics{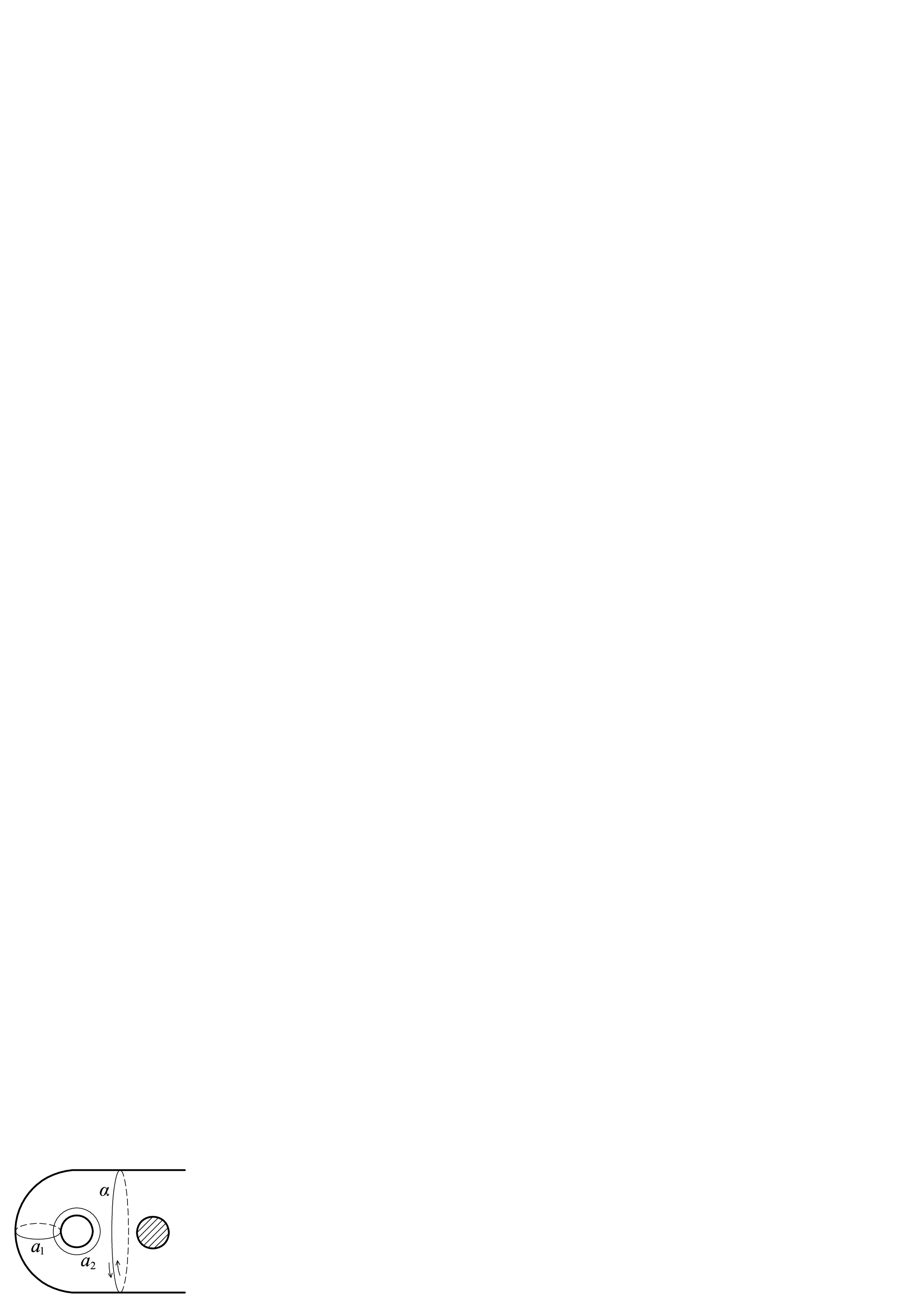}
\caption{Relation $(t_{a_1}t_{a_2})^6=t_{\alpha}$.} \label{fig:160_hom_12t_a}
\end{figure}
Hence by Lemmas \ref{lem:cong:ess} and \ref{lem:two:gener:cong},
\begin{equation} 12[t_{a_1}]=[t_{\alpha}].\label{eq:12taAlpha}\end{equation}
Using Lemma \ref{lem:cong:ess}, one can check that all twists about nonseparating essential circles indicated in Figure \ref{fig:170_hom_AlphKapp} are conjugate to $t_{a_1}$.
\begin{figure}[h]
\includegraphics{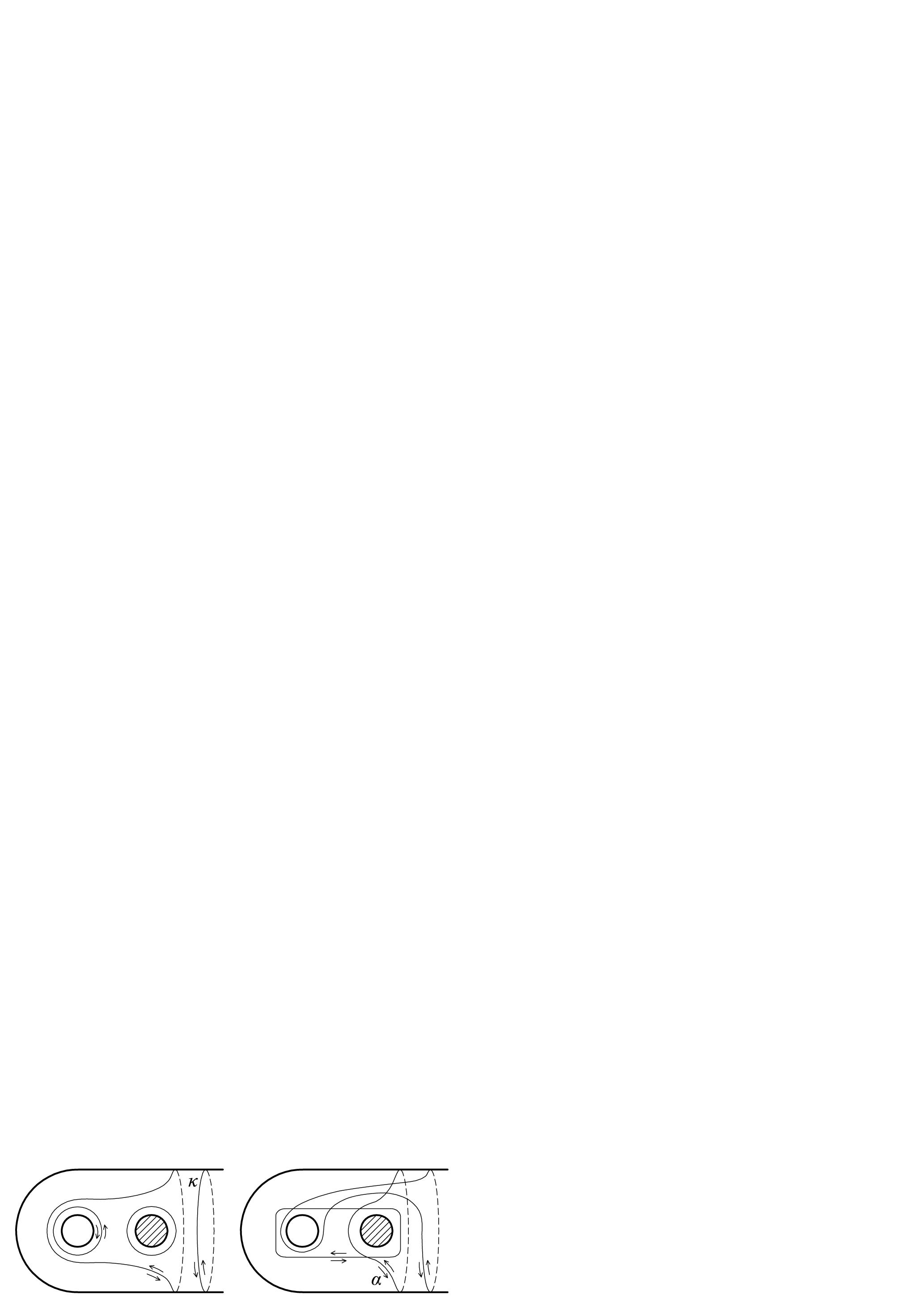}
\caption{Relation $[t_{a_1}t_{a_1}t_{\kappa}]=[t_{a_1}t_{a_1}t_\alpha]$.} \label{fig:170_hom_AlphKapp}
\end{figure}
Therefore we have a lantern relation
\[[t_{a_1}t_{a_1}t_{\kappa}]=[t_{a_1}t_{a_1}t_\alpha], \]
where $\kappa$ is as in Lemma \ref{lem:kapp}. Together with \eqref{eq:12taAlpha} this yields
\begin{equation}12[t_{a_1}]=[t_{\kappa}].\label{eq:t_at_kappa}\end{equation}

On the other hand, by the lantern relation provided by Figure \ref{fig:180_hom_KsiKapp}, we have
\[[t_\kappa t_{a_1} t_{a_1}^{-1}]=[t_\xi t_{a_1}^{-1}t_{a_1}]. \]
\begin{figure}[h]
\includegraphics{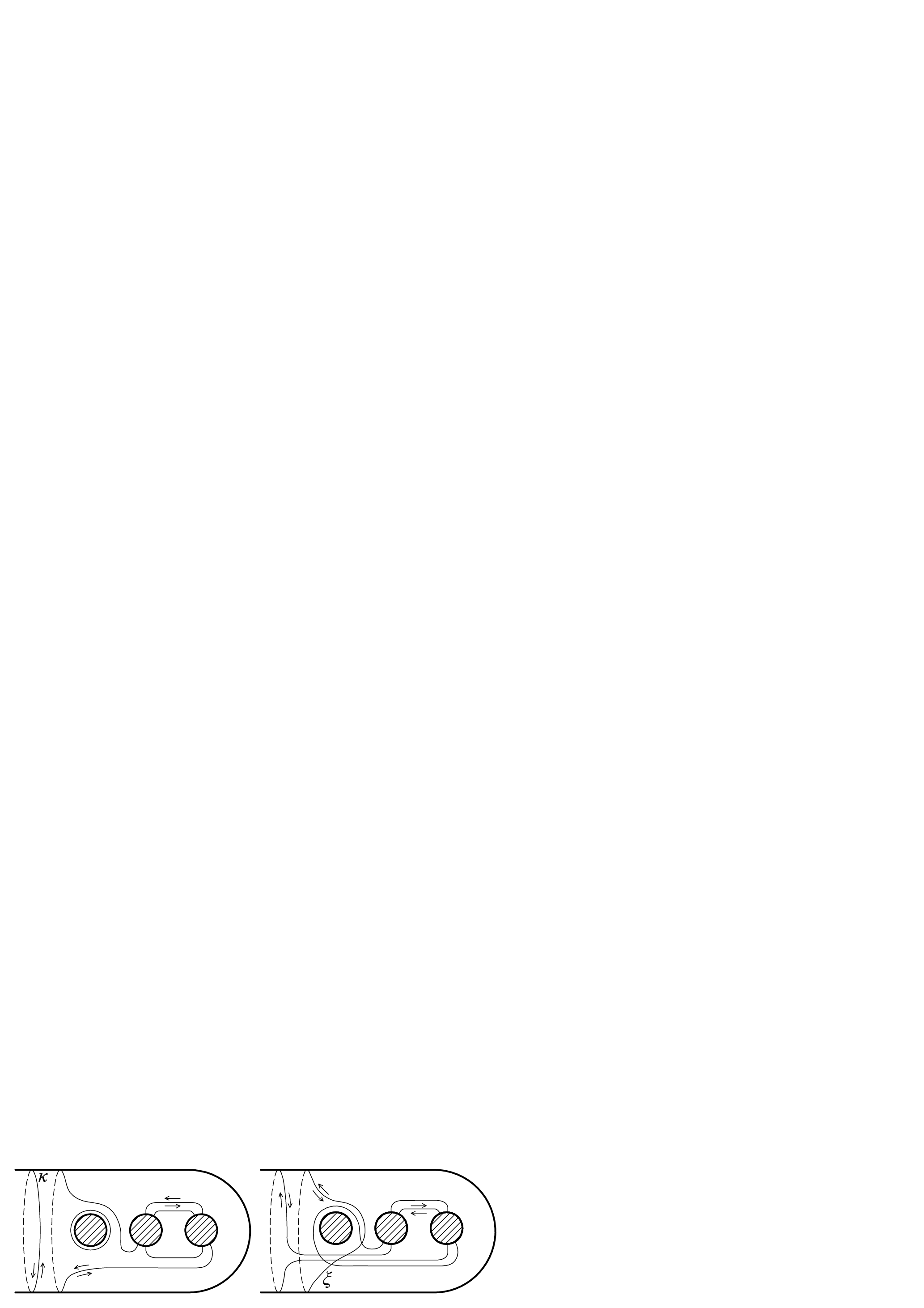}
\caption{Relation $[t_\kappa t_{a_1} t_{a_1}^{-1}]=[t_\xi t_{a_1}^{-1}t_{a_1}]$.} \label{fig:180_hom_KsiKapp}
\end{figure}
By \eqref{eq:t_at_kappa}, this gives $12[t_{a_1}]=[t_\kappa]=[t_\xi]$. Moreover, by Lemma \ref{lem:kapp},
\[[t_\kappa]=[t_{u_1}]+[t_{u_2}]+\ldots+[t_{u_s}]. \]
\end{proof}
\begin{lem}\label{lem:triv_g3_ta_24}
Assume that $g=3$. %and let $a$ be a two--sided essential circle on $N$.
Then $24[t_{a_1}]=0$.
\end{lem}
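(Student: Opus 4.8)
The plan is to obtain the statement purely as an arithmetic consequence of the two relations for $g=3$ that are already in hand, so the work reduces to a cancellation in the abelian group $\mathrm{H_1}({\cal{T}}(N))$. First I would invoke Lemma \ref{lem:ess_g3_12}, which records that for $g=3$
\[12[t_{a_1}]=[t_{u_1}]+[t_{u_2}]+\cdots+[t_{u_s}].\]
Doubling this identity gives
\[24[t_{a_1}]=2[t_{u_1}]+2[t_{u_2}]+\cdots+2[t_{u_s}].\]
Now each term on the right is killed by Lemma \ref{lem:triv_u_j:sq}, which asserts $2[t_{u_j}]=0$ for every $j=1,\ldots,s$ (its hypothesis $g\geq 3$ covers the present case). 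Hence every summand vanishes and $24[t_{a_1}]=0$.

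I do not expect any genuine obstacle here: all of the geometric work has already been carried out upstream. The torus-with-a-hole relation (Proposition \ref{prop:tor:hole}) together with the lantern relations of Figures \ref{fig:170_hom_AlphKapp} and \ref{fig:180_hom_KsiKapp}, combined with Lemma \ref{lem:kapp}, is what produces the factor $12$ in Lemma \ref{lem:ess_g3_12}; and the crosscap-slide conjugation $ft_{\eta_j}f^{-1}=t_{\eta_j}^{-1}$ behind the $2$-torsion of the boundary classes was established in Lemma \ref{lem:triv_u_j:sq}. The present lemma is then exactly the observation $2\cdot 12[t_{a_1}]=\sum_j 2[t_{u_j}]=0$. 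In the closed case $s=0$ the conclusion is even stronger and immediate, since Lemma \ref{lem:ess_g3_12} already gives $12[t_{a_1}]=0$, and a fortiori $24[t_{a_1}]=0$.
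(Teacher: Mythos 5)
Your proof is correct, and it reaches the conclusion by a genuinely different route than the paper. You double the identity $12[t_{a_1}]=[t_{u_1}]+\cdots+[t_{u_s}]$ of Lemma \ref{lem:ess_g3_12} and then annihilate each summand $2[t_{u_j}]$ using Lemma \ref{lem:triv_u_j:sq}; since that lemma is established earlier and independently of the present statement, there is no circularity, and the case $s=0$ is covered because the sum is empty and Lemma \ref{lem:ess_g3_12} already gives $12[t_{a_1}]=0$. The paper instead brings in one more piece of geometry: a lantern relation $t_{\kappa}=t_{\xi}t_{\xi_1}t_{\xi_2}$ in which, by the proof of Lemma \ref{lem:ess_g3_12}, all four twists represent the same class $[t_\xi]=[t_\kappa]=12[t_{a_1}]$, so that $[t_\xi]=3[t_\xi]$, i.e.\ $2[t_\xi]=0$, and hence $24[t_{a_1}]=2[t_\xi]=0$. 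Your argument is more economical --- it is pure bookkeeping from lemmas already in hand and requires no new lantern configuration --- while the paper's version produces the torsion relation $2[t_\xi]=0$ by a mechanism that does not route through the boundary twists at all. Both are complete proofs of the statement.
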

\begin{proof}
Figure \ref{fig:150_hom_3ksi} shows that there is a lantern relation
\[t_{\kappa}=t_{\xi}t_{\xi_1}t_{\xi_2},\]
where $\kappa$ is as in Lemma \ref{lem:ess_g3_12}.
\begin{figure}[h]
\includegraphics{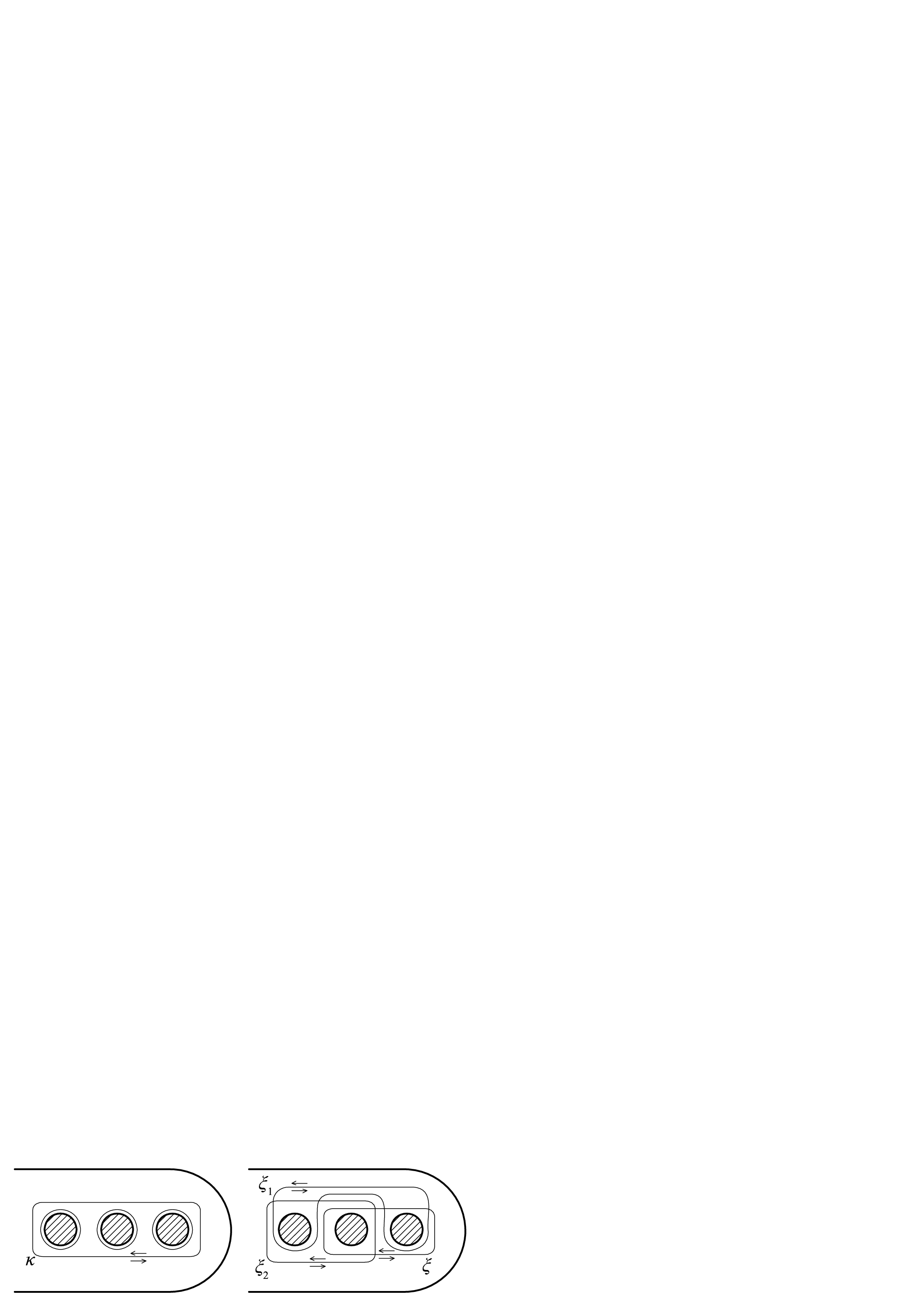}
\caption{Lantern relation $t_{\kappa}=t_{\xi}t_{\xi_1}t_{\xi_2}$.} \label{fig:150_hom_3ksi}
\end{figure}
Moreover by the proof of that lemma, $[t_{\xi}]=[t_{\xi_1}]=[t_{\xi_2}]=[t_\kappa]$. Hence
\[[t_\xi]=3[t_\xi].\]
Using once again Lemma \ref{lem:ess_g3_12}, we have $24[t_{a_1}]= 2[t_\xi]=0$.
\end{proof}
\subsection{Some special cases}
\begin{prop}\label{prop:g3_and_g4}
Let $N=N_{g,s}^n$ be a nonorientable surface of genus $g$ with $s$ holes and $n$ punctures. Then
\[\mathrm{H_1}({\cal{T}}(N),\zz)=
\begin{cases}
\gen{[t_{a_1}]}\cong \zz_{12} &\text{for $(g,s,n)=(3,0,0)$,}\\
\gen{[t_{a_1}]}\cong \zz_{24} &\text{for $(g,s,n)=(3,1,0)$,}\\
\gen{[t_{a_1}],[t_{b_{r+1}}]}\cong \zz_2\times \zz &\text{for $(g,s,n)=(4,0,0)$.}
\end{cases}
\]
\end{prop}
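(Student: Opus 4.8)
The plan is to treat the three cases separately, but in each case to follow the same two-step scheme: first identify a small set of generators for $\mathrm{H_1}({\cal{T}}(N))$ coming from Theorem~\ref{tw:gen:T}, then use the homological relations established in the preceding lemmas to pin down exactly which abelian group they generate. For the generators, recall that $\mathrm{H_1}({\cal{T}}(N))$ is the abelianization of ${\cal{T}}(N)$, so it is generated by the homology classes of the Dehn twists listed in Theorem~\ref{tw:gen:T}. By Lemmas~\ref{lem:cong:ess} and~\ref{lem:two:gener:cong} together with Remark~\ref{uw:iness}, for $g\geq 4$ the classes of all twists about nonseparating two--sided circles with nonorientable complement collapse to a single class $[t_{a_1}]$; this is what produces the very short generating lists appearing in the statement.

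For $(g,s,n)=(3,0,0)$ the relevant generating set from Theorem~\ref{tw:gen:T} is $\{t_l \st l\in{\cal{C}}\cup\{f_1,\ldots,f_{n+s-1},\xi\}\}$, which since $n=s=0$ reduces to twists about the circles in ${\cal{C}}$ and $t_\xi$. Here $g=3$ so $r=1$, and one checks that all the nonseparating twists are conjugate to $t_{a_1}$ by Lemmas~\ref{lem:cong:ess} and~\ref{lem:two:gener:cong}, while $[t_\xi]=0$ by Lemma~\ref{lem:ess_g3_12} (taking $s=0$). Thus $\mathrm{H_1}$ is cyclic, generated by $[t_{a_1}]$. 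The order is forced by Lemma~\ref{lem:ess_g3_12}, which gives $12[t_{a_1}]=0$; to conclude the order is \emph{exactly} $12$ one must exhibit a homomorphism from ${\cal{T}}(N_3)$ onto $\zz_{12}$ sending $[t_{a_1}]$ to a generator, which can be extracted from the known abelianization computations for ${\cal{PM}}^+$ in~\cite{Stukow_SurBg}, or by a direct count against a presentation. For $(g,s,n)=(3,1,0)$ the only change is the appearance of the boundary twist $t_{u_1}$; by Lemma~\ref{lem:triv_u_j:sq} we have $2[t_{u_1}]=0$, and by Lemma~\ref{lem:ess_g3_12} (now with $s=1$) we get $[t_\xi]=[t_{u_1}]$ and $12[t_{a_1}]=[t_{u_1}]$, while Lemma~\ref{lem:triv_g3_ta_24} gives $24[t_{a_1}]=0$. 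These relations force $[t_{u_1}]=12[t_{a_1}]$ to have order $2$, so the group is again cyclic on $[t_{a_1}]$ but now of order $24$.

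For $(g,s,n)=(4,0,0)$ we have $g=4$, $r=1$, and the generating set is $\{t_l \st l\in{\cal{C}}\cup\{\lambda,\psi,\xi\}\}$. By Lemmas~\ref{lem:cong:ess} and~\ref{lem:two:gener:cong} all nonseparating twists with nonorientable complement reduce to $[t_{a_1}]$, and Lemma~\ref{lem:b:psi:cong} identifies $[t_\psi]=-[t_{b_{r+1}}]$, so the surviving generators are $[t_{a_1}]$ and $[t_{b_{r+1}}]$ (the twist $t_{b_{r+1}}$ is nonseparating with \emph{orientable} complement, hence its class is genuinely independent). Lemma~\ref{lem:tw:sq:triv} gives $2[t_{a_1}]=0$ and Lemma~\ref{lem:triv:ksi} gives $[t_\xi]=0$, so the $[t_{a_1}]$ factor is $\zz_2$; no relation forces any multiple of $[t_{b_{r+1}}]$ to vanish, so that factor is free. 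I expect the main obstacle in each case to be the \emph{lower} bounds, i.e.\ showing the generators have the claimed orders rather than smaller ones: the lemmas quoted give upper bounds ($12[t_{a_1}]=0$, $2[t_{a_1}]=0$, etc.), but proving $[t_{a_1}]$ genuinely has order $12$ (resp.\ $24$) and that $[t_{b_{r+1}}]$ is of infinite order requires constructing suitable surjections onto $\zz_{12}$, $\zz_{24}$, and $\zz$, which is the one piece not handed to us directly by the preceding lemmas and which I would build from the action on homology (via the determinant homomorphism $D$ of Section~\ref{sec:act}) and the earlier abelianization results of~\cite{Stukow_SurBg}.
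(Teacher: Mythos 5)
There is a genuine gap: your argument establishes only the \emph{upper} bounds (the groups are quotients of $\zz_{12}$, $\zz_{24}$ and $\zz_2\times\zz$ respectively), and you explicitly defer the lower bounds to surjections that you say you ``would build from the action on homology (via the determinant homomorphism $D$) and the earlier abelianization results of \cite{Stukow_SurBg}.'' Neither of these tools can work. The determinant homomorphism is useless here because ${\cal{T}}(N)=\ker D$ by Corollary \ref{wn:T:eq:falT}, so $D$ restricts to the trivial map on the twist subgroup and detects nothing. The abelianization of ${\cal{PM}}^+(N)$ computed in \cite{Stukow_SurBg} does not determine the abelianization of its index--two subgroup ${\cal{T}}(N)$ (the abelianization functor does not behave this way under passage to finite--index subgroups), and indeed the answer here ($\zz_{12}$, $\zz_{24}$) is much larger than anything visible in $\mathrm{H_1}({\cal{PM}}^+(N))$. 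Note also that this proposition is exactly the ingredient the paper later uses in Theorem \ref{tw:main:hom} to prove that $24\mid\alpha$, $12\mid\alpha$, etc., so the lower bounds must be obtained here by an independent argument; there is no room to wave at ``the preceding lemmas.'' Two smaller instances of the same problem: for $(3,1,0)$ your relations only show the group is a quotient of $\zz_{24}$, not that $[t_{u_1}]=12[t_{a_1}]\neq 0$; and for $(4,0,0)$ the observation that ``no relation forces any multiple of $[t_{b_{r+1}}]$ to vanish'' is not a proof that $[t_{b_{r+1}}]$ has infinite order.

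The paper closes this gap by a completely different mechanism: it starts from the known \emph{finite presentations} of the full mapping class groups ${\cal{M}}(N_3)$ (Theorem 3 of \cite{BirChil1}), ${\cal{M}}(N_{3,1})$ (Theorem 7.16 of \cite{Szep_curv}) and ${\cal{M}}(N_4)$ (Theorem 2.1 of \cite{Szep_gen4}), and applies the Reidemeister--Schreier procedure (in the spirit of Proposition \ref{prop:Joh}, with transversal $U=\{1,y\}$) to obtain an explicit presentation of the index--two subgroup ${\cal{T}}(N)$; abelianizing that presentation gives the exact groups, with upper and lower bounds simultaneously. For example, for $N_3$ one gets ${\cal{T}}(N_3)=\gen{t_{a_1},t_{a_2}\st t_{a_1}t_{a_2}t_{a_1}=t_{a_2}t_{a_1}t_{a_2},\,(t_{a_1}t_{a_2})^6=1}$, whose abelianization is visibly $\zz_{12}$. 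If you want to salvage your approach, you would need to replace your proposed surjections by this presentation--theoretic computation (or some other genuinely independent lower--bound construction); as written, the hardest half of the statement is unproved.
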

\begin{proof}
By Theorem 3 of \cite{BirChil1}, the group ${{\cal{M}(N_3)}}$ has a presentation
\begin{multline*}{{\cal{M}(N_3)}}=\gen{t_{a_1},t_{a_2},y\,\st\, t_{a_1}t_{a_2}t_{a_1}=t_{a_2}t_{a_1}t_{a_2},
yt_{a_1}y^{-1}=t_{a_1}^{-1},\\ yt_{a_2}y^{-1}=t_{a_2}^{-1}, y^2=1, (t_{a_1}t_{a_2})^6=1}.
\end{multline*}
Using $U=\{1,y\}$ as a transversal for the subgroup ${{\cal{T}}(N_3)}$, it is straightforward to obtain that
\[{{\cal{T}}(N_3)}=\gen{t_{a_1},t_{a_2}\,\st\, t_{a_1}t_{a_2}t_{a_1}=t_{a_2}t_{a_1}t_{a_2},(t_{a_1}t_{a_2})^6=1 }.\]
This implies that
\[\mathrm{H_1}({\cal{T}}(N_3))\cong\gen{t_{a_1}\,\st\,t_{a_1}^{12}=1}.\]
The reasoning for the surfaces $N_{3,1}$ and $N_4$ is similar, one has to use the known presentations for the
 groups ${\cal{M}(N_{3,1})}$ and ${\cal{M}(N_{4})}$  -- see Theorem 7.16 of \cite{Szep_curv} and Theorem 2.1 of \cite{Szep_gen4}.
\end{proof} 
\section{Computing $\mathrm{H_1}({\cal{T}}(N),\zz)$}
\begin{tw}\label{tw:main:hom}
Let $N=N_{g,s}^n$ be a nonorientable surface with $n$ punctures and $s$ holes. Then
\[
\mathrm{H_1}({\cal{T}}(N),\zz)=
\begin{cases}
 \zz_{12} & \text{if $g=3$ and $s=0$,}\\
 %\zz_{24} & \text{if $g=3$ and $s=1$,}\\
 \zz_{24}\times \zz_2^{s-1} & \text{if $g=3$ and $s\geq 1$,}\\
 \zz_2\times \zz & \text{if $g=4$ and $s=0$,}\\
 \zz_2^{s}\times \zz & \text{if $g=4$ and $s\geq 1$,}\\
 \zz_2& \text{if $g=5,6$,}\\
 0&\text{if $g\geq 7$.}
\end{cases}
\]
\end{tw}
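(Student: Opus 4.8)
The plan is to treat the six cases uniformly: first produce a small generating set for $\mathrm{H_1}({\cal{T}}(N))$, then read off all relations from the lemmas of this section to obtain an upper bound (a surjection from the displayed group), and finally pin the group down from below using the three presentation-based computations of Proposition~\ref{prop:g3_and_g4} as anchors. Since every nontrivial relation has already been proved above, the upper bound is largely bookkeeping and the genuine difficulty lies in the lower bound.

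\emph{Reducing the generators.} By Theorem~\ref{tw:gen:T}, $\mathrm{H_1}({\cal{T}}(N))$ is spanned by the classes $[t_l]$ for $l$ in a finite list. Lemmas~\ref{lem:cong:ess} and~\ref{lem:two:gener:cong} collapse all twists about nonseparating circles with nonorientable complement to a single class $[t_{a_1}]$ (for $g\geq 4$ the sign ambiguity in Lemma~\ref{lem:two:gener:cong} is harmless by Remark~\ref{uw:iness}, and for $g=3$ it is absorbed into the explicit relations below). For $g$ even, Lemma~\ref{lem:b:psi:cong} identifies the one remaining nonseparating type as $\pm[t_{b_{r+1}}]$ and gives $[t_\psi]=-[t_{b_{r+1}}]$, and one checks that $[t_\lambda]$ likewise reduces to these core classes; the separating generators vanish by Lemmas~\ref{lem:triv:ksi} and~\ref{lem:triv:gamm} (so in particular $[t_\xi]=0$ for $g\geq 4$), while the curves $e_i,f_i$ are rewritten through further lantern relations in terms of $[t_{a_1}]$ and the boundary classes $[t_{u_1}],\ldots,[t_{u_s}]$. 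In particular the punctures contribute nothing, which is exactly why $n$ is absent from the answer. Hence $\mathrm{H_1}({\cal{T}}(N))$ is generated by $[t_{a_1}]$, by $[t_{b_{r+1}}]$ when $g$ is even, and by $[t_{u_1}],\ldots,[t_{u_s}]$.

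\emph{Upper bound.} Next I would feed in the relations. Lemmas~\ref{lem:a_1:g7},~\ref{lem:brp1:g6} and~\ref{lem:bdTwTriv_g5} kill $[t_{a_1}]$ for $g\geq 7$, $[t_{b_{r+1}}]$ for $g\geq 6$ and every $[t_{u_j}]$ for $g\geq 5$; this yields $0$ for $g\geq 7$ and, using $2[t_{a_1}]=0$ from Lemma~\ref{lem:tw:sq:triv}, a quotient of $\zz_2$ for $g=5,6$. For $g=4$ the surviving relations are $2[t_{a_1}]=0$, $2[t_{u_j}]=0$ (Lemma~\ref{lem:triv_u_j:sq}) and $\sum_j[t_{u_j}]=0$ (Lemma~\ref{lem:sum_u_i:g_4}), with $[t_{b_{r+1}}]$ unconstrained, presenting a quotient of $\zz_2^{s}\times\zz$. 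For $g=3$ I would combine $24[t_{a_1}]=0$ (Lemma~\ref{lem:triv_g3_ta_24}), $2[t_{u_j}]=0$, and the linking relation $\sum_j[t_{u_j}]=12[t_{a_1}]$ (Lemma~\ref{lem:ess_g3_12}); eliminating one $[t_{u_j}]$ presents a quotient of $\zz_{24}\times\zz_2^{s-1}$, which degenerates to $\zz_{12}$ when $s=0$ since then $12[t_{a_1}]=0$.

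\emph{Lower bound, the main obstacle.} It remains to show no further collapse occurs. For the torsion carried by $[t_{a_1}]$ and for the free summand $[t_{b_{r+1}}]$ at $g=4$, I would pull back the anchors of Proposition~\ref{prop:g3_and_g4} along the homomorphism ${\cal{T}}(N_{g,s}^n)\to{\cal{T}}(N_{g,s'})$ that caps holes with disks and fills punctures: it fixes $[t_{a_1}]$ and $[t_{b_{r+1}}]$, so their orders downstairs bound their orders upstairs, giving order $12$ (capping to $N_3$), order $24$ (capping to $N_{3,1}$, available once $s\geq 1$), and an infinite-order $[t_{b_{r+1}}]$ (capping to $N_4$). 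The classes this map destroys, namely every $[t_{u_j}]$ and $[t_{a_1}]$ itself at $g=5,6$, need a different detector: here I would use the inclusion ${\cal{T}}(N)\hookrightarrow{\cal{PM}}^+(N)$, under which any class that is nontrivial in the abelianization $\mathrm{H_1}({\cal{PM}}^+(N))$ computed in \cite{Stukow_SurBg} must remain nontrivial in $\mathrm{H_1}({\cal{T}}(N))$. The hard part is organizing these two detectors so that, case by case, they certify exactly the orders and the independence predicted by the upper bound; once that matching is achieved the two bounds coincide and the theorem follows.
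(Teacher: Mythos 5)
Your overall architecture --- reduce to the generators $[t_{a_1}]$, $[t_{b_{r+1}}]$ ($g$ even), $[t_{u_j}]$; read off the relations from Lemmas \ref{lem:tw:sq:triv}--\ref{lem:triv_g3_ta_24}; then establish the lower bound by pushing a hypothetical relation forward along capping homomorphisms and invoking Proposition \ref{prop:g3_and_g4} --- coincides with the paper's proof, and your first detector (cap all holes but one, fill the punctures, land in ${\cal{T}}(N_3)$, ${\cal{T}}(N_{3,1})$ or ${\cal{T}}(N_4)$) is exactly what the paper does to control $\alpha$ and $\beta$. Two small remarks on the generator reduction: the paper gets the $e_i$, $f_i$, $\psi$, $\lambda$ down to $[t_{a_1}]$ directly via Lemma \ref{lem:two:gener:cong} (they are all nonseparating with nonorientable complement), not via further lantern relations; and $[t_\xi]$ for $g=3$ is not killed but identified with $12[t_{a_1}]$ by Lemma \ref{lem:ess_g3_12} --- you have this right in the upper bound.

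The genuine gap is your second detector. To finish, one must show that $[t_{u_1}],\dots,[t_{u_{s-1}}]$ generate independent $\zz_2$ summands (for $g=3,4$) and that $[t_{a_1}]\neq 0$ for $g=5,6$, and for this you propose the map $\mathrm{H_1}({\cal{T}}(N))\to\mathrm{H_1}({\cal{PM}}^+(N))$ together with the abelianization computed in \cite{Stukow_SurBg}. This is not carried out, and as stated it rests on facts you do not have: \cite{Stukow_SurBg} computes the abelianizations of ${\cal{M}}(N_{g,s}^n)$ and ${\cal{PM}}(N_{g,s}^n)$, not of ${\cal{PM}}^+(N_{g,s}^n)$, and in any case the independence of the boundary-twist classes in any of these abelianizations is precisely the kind of statement that needs its own argument --- you would be assuming downstairs what you are trying to prove upstairs. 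The paper's solution is a concrete construction you are missing: for each $j$ glue a cylinder joining $u_j$ to $u_s$, cap the remaining holes and forget the punctures, obtaining a closed nonorientable surface $\widehat{N}_j$ of genus $g+2\geq 5$ in which $u_j$ becomes a \emph{nonseparating} two--sided circle; the induced map $\map{\Upsilon_j}{{\cal{T}}(N)}{{\cal{M}}(\widehat{N}_j)}$ kills every $t_{u_i}$ with $i\neq j,s$, and Theorem 1.1 of \cite{Kork-non1} then forces $2\mid\eps_j$. The same device (map to a closed surface and apply Korkmaz's theorem) settles $g=5,6$. Without this, or a verified substitute, the lower bound --- which you yourself identify as the main obstacle --- remains open in your write-up.
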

\begin{proof}
By Theorem \ref{tw:gen:T}, Corollary \ref{wn:T:eq:falT}, Lemmas \ref{lem:two:gener:cong} and \ref{lem:b:psi:cong},
$\mathrm{H_1}({\cal{T}}(N))$ is generated by
\begin{itemize}
 \item $[t_{a_1}],[t_\xi],[t_{u_1}],\ldots,[t_{u_s}]$ if $g$ is odd,
 \item $[t_{a_1}],[t_{b_{r+1}}],[t_\xi],[t_{u_1}],\ldots,[t_{u_s}]$ if $g$ is
 even.
\end{itemize}
Moreover
\begin{itemize}
 \item $[t_{a_1}]=0$ if $g\geq 7$ (Lemma \ref{lem:a_1:g7}),
 \item $[t_{b_{r+1}}]=0$ if $g\geq 6$ even (Lemma \ref{lem:brp1:g6}),
 \item $[t_{u_1}]=\cdots=[t_{u_s}]=0$ if $g\geq 5$ (Lemma \ref{lem:bdTwTriv_g5}),
 \item $[t_\xi]=0$ if $g\geq 4$ (Lemma \ref{lem:triv:ksi}),
 \item $[t_{u_1}]+\cdots+[t_{u_s}]=0$ if $g=4$ (Lemma \ref{lem:sum_u_i:g_4}),
 \item $[t_\xi]=[t_{u_1}]+\cdots+[t_{u_s}]=12[t_{a_1}]$ if $g=3$ (Lemma \ref{lem:ess_g3_12}).
\end{itemize}
Hence $\mathrm{H_1}({\cal{T}}(N))$ is generated by
\begin{itemize}
 \item $[t_{a_1}],[t_{u_1}],\ldots,[t_{u_{s-1}}]$ if $g=3$,
 \item $[t_{a_1}],[t_{b_{r+1}}],[t_{u_1}],\ldots,[t_{u_{s-1}}]$ if $g=4$,
 \item $[t_{a_1}]$ if $g=5,6$,
\end{itemize}
and $\mathrm{H_1}({\cal{T}}(N))=0$ if $g\geq 7$. In particular this concludes the proof in the case $g\geq
7$. Therefore in what follows we assume that $g\leq 6$.
% and that $s+n>0$ if $g=3$ (the case
%$(g,s,n)=(3,0,0)$ is covered by Proposition \ref{prop:g3_and_g4}).

We also know that
\begin{align}
 &2[t_{a_1}]=0 \quad \text{if $g\geq 4$ (Lemma \ref{lem:tw:sq:triv}),}\label{eq:rel1}\\
 &2[t_{u_1}]=\cdots =2[t_{u_{s-1}}]=0 \quad \text{if $g\geq 3$ (Lemma \ref{lem:triv_u_j:sq}),}\label{eq:rel2}\\
 &12[t_{a_1}]=0 \quad \text{if $g=3$ and $s=0$ (Lemma \ref{lem:ess_g3_12}),}\label{eq:rel4}\\
 &24[t_{a_1}]=0 \quad \text{if $g=3$ (Lemma \ref{lem:triv_g3_ta_24}).}\label{eq:rel3}
\end{align}
Hence it is enough to prove that every relation in the abelian group $\mathrm{H_1}({\cal{T}}(N))$ is a
consequence of the relations \eqref{eq:rel1}--\eqref{eq:rel3} above.

Let $g=3$ %and assume that $s+n>0$ (the case
%$(g,s,n)=(3,0,0)$ is covered by Proposition \ref{prop:g3_and_g4}).
and suppose that
\begin{equation}\label{eq:1_tw}
\alpha [t_{a_1}]+\eps_1[t_{u_1}]+\cdots+\eps_{s-1}[t_{u_{s-1}}]=0.
\end{equation}
%Let $\fal{N}$ be a surface of genus $3$ obtained from $N$ by forgetting all the punctures and gluing a disk to each boundary component of $N$ but one, provided $s>0$.
If $s>0$ define ${N'}$ to be a surface of genus $3$ obtained from $N$ by forgetting all the punctures and gluing a disk to each boundary component but one, say $u_s$. If $s=0$ define ${N'}$ by forgetting about the punctures in $N$.
%Let $\fal{N}$ be the surface of genus $3$ with one puncture obtained by gluing a disk with one puncture to
%each boundary component of $N$ and then forgetting about all the punctures but one (recall that we assumed
%that $s+n>0$).
We have a homomorphism
\[\map{\Phi}{{\cal{T}}(N)}{{\cal{T}}({N'})}.\]
Clearly $\Phi(t_{u_i})=0$ for $1\leq i \leq s-1$, hence equation \eqref{eq:1_tw} yields
\[\alpha[t_{a_1}]=0\quad\text{in $\mathrm{H_1}({\cal{T}}({N'}))$.} \]
By Proposition \ref{prop:g3_and_g4}, we have $24|\alpha$ if $s>0$ and $12|\alpha$ if $s=0$. Therefore by relations \eqref{eq:rel4} and \eqref{eq:rel3}, equation
\eqref{eq:1_tw} becomes
\begin{equation}\label{eq:2_tw}
\eps_1[t_{u_1}]+\cdots+\eps_{s-1}[t_{u_{s-1}}]=0.
\end{equation}
This concludes the proof if $s\leq 1$, hence assume that $s\geq 2$. Now let $\widehat{N}_j$ for
$j=1,\ldots,s-1$, be the surface obtained from $N$ by forgetting the punctures, gluing a cylinder to the
circles $u_j$ and $u_s$ and finally gluing a disk to each of the remaining boundary components. Then
$\widehat{N}_j$ is a closed nonorientable surface of genus $5$. Let
\[\map{\Upsilon_j}{{\cal{T}}(N)}{{\cal{M}}(\widehat{N}_j)}\]
be the homomorphism induced by inclusion. Since $\Upsilon_j(t_{u_i})=0$ for $i\neq j$ and $i\neq s$,
equation \eqref{eq:2_tw} gives us
\[\eps_j[t_{u_j}]=0\quad\text{in $\mathrm{H_1}({\cal{M}}(\widehat{N}_j))$}. \]
Since $u_j$ is a nonseparating two--sided circle on $\widehat{N}_j$, by Theorem 1.1 of \cite{Kork-non1}, we have
$2|\eps_j$. By relation \eqref{eq:rel2}, equation \eqref{eq:2_tw} becomes $0=0$, which completes the proof
for $g=3$.

The proof for $g=4$ is analogous. If we assume that
\[\alpha [t_{a_1}]+\beta [t_{b_{r+1}}]+\eps_1[t_{u_1}]+\cdots+\eps_{s-1}[t_{u_{s-1}}]=0\]
the we can show that $\alpha=\beta=0$ in the same manner as in the case $g=3$, namely by mapping $N$ into a closed surface of genus $4$ and using Proposition \ref{prop:g3_and_g4}. Similarly, we can show that $\eps_1=\ldots\eps_{s-1}=0$ by mapping $N$ into a closed surface of genus $6$ and using Theorem 1.1 of~\cite{Kork-non1}.

If $g=5$ or $g=6$ the proof is even simpler, it is enough to map $N$ into a closed surface and use Theorem 1.1 of \cite{Kork-non1}, we skip the details.
\end{proof}

\bibliographystyle{abbrv}
\bibliography{mybib}
\end{document}